\documentclass[final,12pt]{elsarticle}

\usepackage[utf8]{inputenc}
\usepackage[T1]{fontenc}
\usepackage[english]{babel}
\usepackage{amssymb,mathtools}
\usepackage{booktabs}
\usepackage{array}
\usepackage{tabularx}
\usepackage{tikz}
\usepackage{enumitem}
\usepackage{xcolor}
\usepackage{subcaption}
\usepackage{amsthm}
\usepackage{placeins}
\usepackage{url}

\newtheorem{theorem}{Theorem}[section]
\newtheorem{proposition}[theorem]{Proposition}
\newtheorem{lemma}[theorem]{Lemma}
\newtheorem{corollary}[theorem]{Corollary}

\theoremstyle{definition}
\newtheorem{assumption}[theorem]{Assumption}
\theoremstyle{remark}
\newtheorem{remark}[theorem]{Remark}

\begin{document}

\begin{frontmatter}

\title{Implementation Filters and Delay-Budget Instability
in Coupled Replicator--Mutator Dynamics}

\author{Alexander Omelchenko}
\ead{aomelchenko@constructor.university}
\address{Constructor University Bremen gGmbH,
Campus Ring 1, 28759 Bremen, Germany}

\begin{abstract}
We model an adaptive contest in which two antagonistically coupled populations
continually reallocate effort among competing methods, but decisions are not
fielded instantly. Each side has an intended portfolio and a deployed portfolio:
intended reallocations follow delayed observations of the opponent, while
deployment follows intent through a first-order implementation filter. Under
barycentric balance and uniform exploration, the linearized scalar branches have
a characteristic factor in which hard observation and deployment lags enter only
through their total sum, whereas implementation rates enter through real filter
factors that cannot be absorbed into selection or exploration. In the strictly
antagonistic class, negative spectral branches split into three regimes: weak
branches have no positive-frequency crossing, intermediate branches lose
stability through a delay-induced Hopf bifurcation, and strong branches are at or
beyond the implementation-filter instability margin already at zero hard delay.
This gives an operational delay-budget rule: in the delay-induced window,
reducing any hard lag has the same first-order stabilizing leverage at onset; in
the filter-induced regime, hard-lag reduction alone cannot restore stability.
Balanced scalar performance observables generically show a mean shift and a
second harmonic at twice the compositional frequency, and under strict
antagonism the two performance signals are locked in antiphase with fixed
amplitude ratio. For a baseline branch, a finite-dimensional Hopf normal-form
calculation gives a negative cubic coefficient, and direct simulations reproduce
the predicted threshold, amplitude scaling, and observable signatures.
Motivating applications include cybersecurity and rapid technological
countermeasure adaptation.
\end{abstract}

\begin{keyword}
coupled replicator--mutator dynamics \sep
implementation filters \sep
delay differential equations \sep
Hopf bifurcation \sep
antagonistic coevolution \sep
oscillations on the simplex \sep
delay-budget instability
\end{keyword}

\end{frontmatter}


\section{Introduction}
\label{sec:intro}

In a fast adaptive contest, each side continually reallocates effort among
competing methods, and the value of any one method depends on what the
opponent currently fields. A counter that dominates today may be eroded as the
other side adapts, so advantage is repeatedly won, lost, and regained rather
than settled. This paper studies a minimal nonlinear mechanism by which such a
contest either returns to a balanced mixed state or enters persistent
oscillation.

Two features are central. The first is \emph{antagonism}: a reallocation that
helps one side tends, through the interaction, to reduce the value of the other
side's current mix, as in Red-Queen-type coevolution and arms-race dynamics
\cite{redqueen1973,dawkins1979}. The second is \emph{implementation lag}:
deciding to adopt a method is not the same as fielding it at scale. The gap
between decision and operative effect is familiar from implementation research
\cite{pressman1984,sabatier1980,barrett2004}. In technological contests it may
include validation, production, training, integration, certification, supply,
and rollout. Selection may therefore be fast at the level of intention and slow
at the level of deployed capability.

We model this distinction by two populations on probability simplexes. Each
population has two distributions. The \emph{revision} portfolio records the
current intended mixture, selected from observed performance. The
\emph{deployment} portfolio records what has actually been fielded. Revision
reacts to a delayed observation of the opponent's deployed state, while
deployment follows revision through a first-order implementation filter with a
hard deployment lag and a finite implementation rate. The question is how the
stability of the balanced state depends on observation delay, deployment delay,
implementation speed, and antagonistic coupling.

The motivating readings are adaptive technological contests. In cybersecurity,
a portfolio may consist of detection rules, patching policies, configurations,
hardening measures, or offensive techniques. A rule or patch may be selected
and validated before it is broadly deployed; effective coverage appears only
after rollout across a population of systems. Rapid drone/counter-drone
adaptation has the same mechanism-level structure: portfolios may include
sensing, guidance, communications, electronic countermeasures, terminal
autonomy, or interception methods. The revision layer represents intended or
validated reallocations of effort; the deployment layer represents fielded
kits, trained units, supply chains, and usable procedures. The model is not
calibrated to a specific theatre and does not prescribe operational choices. It
isolates a dynamical mechanism: stale feedback and finite fielding speed can
turn local adaptation into sustained oscillation.

\paragraph{Relation to the author's previous implementation-lag model}
This paper continues the implementation-lag programme of
\cite{omelchenko2026voteshare}. That work studied a single population on a
simplex under an implementation lag separating a target regime from its
effective realization. Its main conclusion was that threshold restoration is
preventive rather than curative: a delayed effective response may arrive after
the state has already entered the basin of a competing attractor. The present
paper retains the distinction between intended and effective regimes, but moves
from one population to two antagonistically coupled populations, and from
nonautonomous threshold passage to autonomous oscillation. The spectral object
is no longer a Perron--Frobenius threshold, but the spectrum of a projected
interaction product.

\paragraph{Prior work}
The replicator equation \cite{taylor1978,hofbauer1998} and its
mutation-augmented form \cite{page2002} are classical. Delay-induced Hopf
bifurcation in replicator dynamics is also well established. Single-population
delayed replicator systems can lose stability through Hopf bifurcation under
discrete, distributed, or strategy-dependent delays
\cite{tao1997,alboszta2004,benkhalifa2017,wettergren2023,sdgames2025,wesson2016}.
Delayed replicator--mutator systems have been analyzed in two-strategy
symmetric games \cite{mittal2020}, and two-community replicator dynamics with
discrete multi-delays has been studied without mutation
\cite{twocommunity2020}. We do not reclaim these Hopf mechanisms. The
distinction here is structural: previous models attach delay to payoff,
fitness, state information, or interaction effects, whereas the present model
introduces a separate deployed state distinct from the intended state.
A further structural point concerns the antagonistic core itself. In the
conservative limit---no exploration, instantaneous deployment---two
antagonistically coupled replicator populations are of bimatrix zero-sum type,
whose interior dynamics are Hamiltonian and orbit-recurrent
\cite{hofbauer1996,hofbauer1998}. Uniform exploration supplies damping. In the delay-induced window isolated
below, the barycenter is asymptotically stable at zero hard delay. Thus the
oscillation reported in that regime is not a pre-existing neutral orbit made
visible by delay, but a genuine Hopf bifurcation created by delayed
revision--deployment feedback. For the baseline branch used in the diagnostics,
the local amplitude and frequency laws are classified in
Appendix~\ref{app:baseline-lyapunov}.

Closest to our construction are compartment and structured-population
replicator models in which maturation compartments generate strategy-dependent
delays and can yield Hopf bifurcations
\cite{ficcompartment2025,miekiszstructured2025}. In those models the
compartments are life-history or population-structure stages, and the delay
acts on reproduction or maturation. Here the compartments are intended and
deployed portfolios of two antagonistically coupled populations, and the delay
acts on strategy deployment itself. The deployment filter is also related to
distributed-delay modeling and the linear chain trick
\cite{macdonald1978,smith2011}. In that literature, gamma or Erlang kernels
usually act on payoff, resource, or state variables. Here the shifted
exponential kernel acts on the strategy distribution itself: a selected
revision becomes deployment only after a hard lag and then accumulates with
finite rate. The intended/deployed split is conceptually close to
plastic/genetic two-timescale ideas in phenotypic-plasticity models
\cite{phenadjust2015}, but the mathematical object is a coupled
replicator--mutator delay system on a product of simplexes.

\paragraph{Contributions}
We study two coupled \(m\)-strategy populations \(X,Y\), with revision
portfolios \(p,q\in\Delta_m\) and deployment portfolios \(x,y\in\Delta_m\).
The revision dynamics use delayed opponent deployment but current
normalization, which preserves the simplexes. Deployment follows revision
through shifted exponential implementation filters. The main contributions are
as follows.

\begin{enumerate}[leftmargin=2.0em]

\item \emph{Two-layer simplex architecture.}
We formulate a revision--deployment system on \(\Delta_m^4\) and prove that
the product of simplexes is forward invariant. The deployment law is an
exponentially weighted memory of past revisions, shifted by a hard
implementation lag. Thus a fielded portfolio can contain only what was
selected earlier and then passed through the implementation pipeline.

\item \emph{Implementation filter factors.}
Under barycentric balance and uniform exploration, the linearized
characteristic factor associated with a tangent spectral value \(\gamma_\rho\)
has the form
\[
(z+\mu_X)(z+\mu_Y)(z+\kappa_X)(z+\kappa_Y)
=
\frac{\lambda_X\lambda_Y\kappa_X\kappa_Y}{m^2}\,
\gamma_\rho e^{-z\tau_\Sigma},
\qquad
\tau_\Sigma=\sigma_X+\sigma_Y+\theta_X+\theta_Y .
\]
The hard observation and deployment lags enter the feedback phase only through
their sum \(\tau_\Sigma\). By contrast, the factors \(z+\kappa_X\) and
\(z+\kappa_Y\) arise from the filter poles of the implementation layer in the
loop-transfer representation. They are the fingerprint of deployment dynamics;
they are not, in general, characteristic roots of the full coupled system, and
they cannot be absorbed into selection or exploration rates.

\item \emph{Two oscillatory mechanisms.}
In the strictly antagonistic class
\(\mathcal B_\rho=-\chi\mathcal A_\rho^\top\), the projected product has
nonpositive real spectrum. A Routh--Hurwitz margin and a secant bound split
negative antagonistic branches into three regimes: too weak to generate a
positive-frequency crossing, genuinely delay-induced Hopf instability, and
filter-induced instability already at zero hard delay. Thus oscillation may be
caused by stale hard feedback, or by implementation-filter phase lag embedded
in a sufficiently strong antagonistic loop.

\item \emph{Delay-budget triage and observable signatures.}
In the delay-induced window, the Hopf threshold sees the hard lags only through
\(\tau_\Sigma\). Consequently all hard observation and deployment lags have the
same first-order stabilizing leverage at onset. Eliminating deployment gives a
finer nonlinear statement: apart from the prescribed initial histories, the
reduced revision equations depend on the four hard lags through the two
cross-delays \(\sigma_X+\theta_Y\) and \(\sigma_Y+\theta_X\). For compatible
histories, allocations that share these cross-delays have identical reduced
revision trajectories. In the filter-induced regime, however, reducing hard lags
alone cannot restore stability; the relevant intervention must change the branch
strength, damping, or implementation filters. The same framework gives
observable diagnostics. Under barycentric balance, scalar performance
observables have no linear term, so a compositional oscillation at frequency
\(\omega_\ast\) produces a mean shift and a second harmonic at \(2\omega_\ast\).
Under strict antagonism the two scalar performances are locked in antiphase with
fixed amplitude ratio \(\chi\).

\item \emph{Criticality of the delay-induced onset.}
The general theorem gives existence and transversality. For the baseline branch
we compute the cubic Hopf coefficient and obtain a negative real part, hence a
supercritical locally attracting onset and the associated square-root amplitude
law. We also give two branch-specific checks: an analytic sign proof for the
equal-split leading branch of the diagonal \(m=3\) family and a closed-form
two-strategy calculation.

\end{enumerate}

In plain terms, the model says that deciding, observing, and fielding are
different clocks. A contest may oscillate because agents act on stale
information, or because implementation itself supplies enough phase lag under
strong antagonism. In the first case, stability management is a hard-delay
budget problem; in the second, hard-lag reduction is not enough. Finally,
headline effectiveness can be a distorted observable: in the balanced core, an
effectiveness cycle of period \(T\) corresponds to a hidden compositional race
of period \(2T\).

The delayed-covariance identity recorded below is used as an interpretation,
not as the proof of Hopf bifurcation. It says that selection based on delayed
fitness is beneficial only while current and delayed fitness profiles remain
positively aligned; if the environment changes fast enough, adaptation becomes
anti-learning. The rigorous stability result follows independently from the
characteristic quasipolynomial.

The paper is organized as follows. Section~\ref{sec:model} specifies the
revision--deployment model and proves simplex invariance.
Section~\ref{sec:cov} records the delayed-covariance identity and its
zero-delay variance limit. Section~\ref{sec:lin} derives the characteristic
quasipolynomial, the zero-hard-delay stability margin, and the Hopf criterion.
Section~\ref{sec:asym} analyzes asymmetry, scalar observables, and antiphase
locking in the strictly antagonistic class. Section~\ref{sec:num} gives direct numerical diagnostics of the gain window,
implementation-rate dependence, time-domain behavior, a normal-form
classification of the onset, amplitude scaling, and observable signatures.
Section~\ref{sec:disc} discusses the mechanism, its practical reading, and its
limitations.

\section{Model and structural reduction}
\label{sec:model}

Let
\[
\Delta_m=\{z\in\mathbb R^m_{\ge0}:\mathbf 1^\top z=1\},
\qquad
u=\frac1m\mathbf 1,
\]
and let
\[
T\Delta_m=\{z\in\mathbb R^m:\mathbf 1^\top z=0\},
\qquad
P=I-\frac1m\mathbf 1\mathbf 1^\top
\]
be the tangent space and its orthogonal projection. Population $X$ has a revision portfolio $p(t)\in\Delta_m$ and a deployed portfolio $x(t)\in\Delta_m$; population $Y$ has $q(t),y(t)\in\Delta_m$. The payoff matrices are $A_\rho,B_\rho\in\mathbb R^{m\times m}$, with dimensionless entries. The deployed scalar performances are
\[
\Phi_X(x,y)=x^\top A_\rho y,
\qquad
\Phi_Y(y,x)=y^\top B_\rho x.
\]

Revision reacts to delayed deployed opponent profiles,
\[
f_X^\sigma(t)=A_\rho y(t-\sigma_X),
\qquad
f_Y^\sigma(t)=B_\rho x(t-\sigma_Y),
\]
where $\sigma_X,\sigma_Y\ge0$ are observation lags. With column-stochastic mutation matrices $M_X,M_Y$, the revision dynamics are
\begin{equation}
\dot p
=
\lambda_X p\odot\left[A_\rho y(t-\sigma_X)-p(t)^\top A_\rho y(t-\sigma_X)\mathbf 1\right]
+\mu_X(M_Xp-p),
\label{eq:p-dynamics}
\end{equation}
\begin{equation}
\dot q
=
\lambda_Y q\odot\left[B_\rho x(t-\sigma_Y)-q(t)^\top B_\rho x(t-\sigma_Y)\mathbf 1\right]
+\mu_Y(M_Yq-q).
\label{eq:q-dynamics}
\end{equation}
The signal is delayed but the normalization is current. This is essential: a fully delayed normalization would not, in general, preserve the affine constraint $\mathbf 1^\top p=1$.

Deployment follows revision through shifted exponential implementation filters,
\begin{equation}
\dot x=\kappa_X[p(t-\theta_X)-x(t)],
\label{eq:x-deployment}
\end{equation}
\begin{equation}
\dot y=\kappa_Y[q(t-\theta_Y)-y(t)].
\label{eq:y-deployment}
\end{equation}
Here $\theta_X,\theta_Y\ge0$ are hard deployment lags and $\kappa_X,\kappa_Y>0$ are implementation rates. Thus $\sigma_i$ measures how stale the observed opponent state is, while $(\theta_i,\kappa_i)$ measures how long a selected revision takes to become fielded capability and how fast it ramps once deployment begins.

\emph{Interpretation.} The model separates three clocks: observation, revision, and fielding. An agent may decide quickly but deploy slowly, or deploy quickly on outdated information. These clocks correspond to different bottlenecks in applications: sensing and intelligence on one side, production, training, integration, and rollout on the other.

\begin{proposition}[simplex invariance]
\label{prop:invariance}
For continuous initial histories in $\Delta_m^4$, the solution of \eqref{eq:p-dynamics}--\eqref{eq:y-deployment} remains in $\Delta_m^4$ for all $t\ge0$. If $M_X,M_Y$ are strictly positive, the boundary of the revision simplexes is strictly repelling; in the uniform case $M_X=M_Y=U:=u\mathbf 1^\top$, one has $\dot p_k=\mu_X/m$ at $p_k=0$ and $\dot q_k=\mu_Y/m$ at $q_k=0$.
\end{proposition}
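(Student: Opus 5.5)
I will argue by the method of steps combined with two separate invariance arguments: one for the affine constraint and one for nonnegativity. Throughout, the system \eqref{eq:p-dynamics}--\eqref{eq:y-deployment} is a delay differential equation with locally Lipschitz right-hand side, since it is polynomial in the current state and in the delayed values. It therefore has a unique local solution for continuous initial histories. Global existence will follow once I show the solution stays in the compact set $\Delta_m^4$. When some lag vanishes, the corresponding term is simply current rather than delayed, and the same argument applies.

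\textbf{Affine constraint.} Sum the components of \eqref{eq:p-dynamics}. The selection term contributes $\lambda_X\bigl(p^\top f - (\mathbf 1^\top p)\,p^\top f\bigr) = \lambda_X\, p^\top f\,(1-\mathbf 1^\top p)$, where $f=A_\rho y(t-\sigma_X)$. Column stochasticity gives $\mathbf 1^\top M_X=\mathbf 1^\top$, so the mutation term contributes zero. Hence $s=\mathbf 1^\top p$ satisfies the linear scalar equation $\dot s=-\lambda_X\, p^\top f\,(s-1)$ with continuous coefficient, and $s(0)=1$ forces $s\equiv1$. Here the current normalization matters; this is exactly the remark made after \eqref{eq:q-dynamics}. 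The same computation applies to $q$. For the deployment layer, $\mathbf 1^\top x$ satisfies $\tfrac{d}{dt}\mathbf 1^\top x=\kappa_X(\mathbf 1^\top p(t-\theta_X)-\mathbf 1^\top x)$. The delayed argument equals $1$: on $[0,\theta_X]$ it comes from the initial history, and afterwards from the previous step. So $\mathbf 1^\top x\equiv 1$, and likewise for $y$.

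\textbf{Nonnegativity.} For $x$ I use variation of constants:
\[
x(t)=e^{-\kappa_X t}x(0)+\kappa_X\int_0^t e^{-\kappa_X(t-s)}p(s-\theta_X)\,ds .
\]
This is nonnegative whenever the past revision values are nonnegative. This exhibits the shifted-exponential-memory reading. For $p$, write the $k$-th component as
\[
\dot p_k=p_k\,g_k(t)+\mu_X\bigl((M_Xp)_k-p_k\bigr),
\]
where $g_k=\lambda_X(f_k-p^\top f)$ is continuous along the solution. Then
\[
\tfrac{d}{dt}\bigl(p_k e^{-\int_0^t (g_k-\mu_X)}\bigr)=\mu_X (M_Xp)_k\, e^{-\int_0^t (g_k-\mu_X)} .
\]
The right-hand side is $\ge0$ as long as $p\ge0$, since $M_X$ has nonnegative entries. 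To avoid circularity, I will take a first exit time $t^\ast$ with $p_k(t^\ast)=0$ and $p\ge0$ on $[0,t^\ast]$. The integrating-factor identity then shows $p_k$ cannot become negative immediately afterwards, provided $(M_Xp)_k\ge0$ there. The clean way to make this rigorous is the standard quasi-positivity (Nagumo-type) criterion for functional differential equations: if a component hits zero while the others are nonnegative, its derivative is $\ge0$. I expect this step to be the main technical obstacle. In the degenerate case $(M_Xp)_k=0$ the derivative vanishes, and the cleanest fix is to perturb $M_X$ to be strictly positive, then pass to the limit using continuous dependence on parameters.

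\textbf{Boundary behaviour.} At $p_k=0$ the selection term vanishes, because it carries the factor $p_k$, leaving $\dot p_k=\mu_X(M_Xp)_k$. If $M_X>0$ entrywise, then $(M_Xp)_k\ge \min_{ij}(M_X)_{ij}\,\mathbf 1^\top p>0$, so the boundary is strictly repelling. In the uniform case $M_X=U$ we get $Up=u\,\mathbf 1^\top p=u$, so $\dot p_k=\mu_X/m$, and similarly $\dot q_k=\mu_Y/m$. This strict positivity also directly settles the nonnegativity argument when $M_X>0$, which is why I reduce the general case to it. Once invariance of $\Delta_m^4$ is established, the solution is bounded, and global existence follows from the method of steps.
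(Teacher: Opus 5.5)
Your proposal is correct and follows the paper's own argument. The affine constraint comes from summing the revision equation, using $\mathbf 1^\top M_X=\mathbf 1^\top$ and the current normalization. Nonnegativity of $p$ comes from the boundary derivative $\dot p_k=\mu_X(M_Xp)_k\ge0$, and nonnegativity of $x$ from the variation-of-constants formula \eqref{eq:x-integral-representation}.

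Your first-exit and perturbation treatment of the degenerate case $(M_Xp)_k=0$ is more careful than the paper's one-line step, which implicitly relies on a Nagumo-type invariance theorem. It can be shortened, though. Along the solution, $p$ satisfies the linear system $\dot p=[\operatorname{diag}(g-\mu_X\mathbf 1)+\mu_XM_X]\,p$, whose coefficient matrix is continuous with nonnegative off-diagonal entries (Metzler). Such a system preserves $\mathbb R^m_{\ge0}$ directly, with no limiting argument in $M_X$.
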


The proof is given in Appendix~\ref{app:sec2}. The key deployment identity is
\begin{equation}
x(t)=e^{-\kappa_X t}x(0)+\kappa_X\int_0^t e^{-\kappa_X(t-s)}p(s-\theta_X)\,ds,
\label{eq:x-integral-representation}
\end{equation}
and analogously for $y$. Away from the initial transient, this is the shifted exponential memory
\begin{equation}
x(t)=\int_0^\infty g_X(a)p(t-a)\,da,
\qquad
g_X(a)=\kappa_Xe^{-\kappa_X(a-\theta_X)}\mathbf 1_{\{a\ge\theta_X\}}.
\label{eq:shifted-exponential-kernel}
\end{equation}
Thus deployment is a smeared memory of past intentions with a hard floor. A capability cannot appear in the deployed mix unless it was selected earlier and then passed through the implementation pipeline.

We analyze the interior equilibrium under a balance condition.
\begin{assumption}[barycentric balance]
\label{ass:barycentric-balance}
For each admissible $\rho$,
\begin{equation}
A_\rho u=a_\rho\mathbf 1,
\qquad
A_\rho^\top u=a_\rho\mathbf 1,
\label{eq:A-balance}
\end{equation}
\begin{equation}
B_\rho u=b_\rho\mathbf 1,
\qquad
B_\rho^\top u=b_\rho\mathbf 1,
\label{eq:B-balance}
\end{equation}
and, whenever general mutation matrices are used,
\begin{equation}
M_Xu=u,
\qquad
M_Yu=u.
\label{eq:M-barycenter}
\end{equation}
\end{assumption}
Under this assumption, $(p,q,x,y)=(u,u,u,u)$ is an equilibrium for all delays. Moreover, if $x=u+\xi$ and $y=u+\eta$ with $\xi,\eta\in T\Delta_m$, then
\begin{equation}
\Phi_X(u+\xi,u+\eta)-\Phi_X(u,u)=\xi^\top A_\rho\eta,
\label{eq:PhiX-bilinear}
\end{equation}
\begin{equation}
\Phi_Y(u+\eta,u+\xi)-\Phi_Y(u,u)=\eta^\top B_\rho\xi.
\label{eq:PhiY-bilinear}
\end{equation}
Thus the portfolio components fluctuate at first order, whereas balanced scalar performance fluctuates only at second order.

\emph{Interpretation.} Near a balanced standoff, composition is a more sensitive early indicator than headline effectiveness. The mix of methods may already be cycling while aggregate performance still looks flat.

The scale/asymmetry parameter $\rho$ is taken to deform the local interaction geometry without moving the equilibrium. A typical balance-preserving family is
\begin{equation}
A_\rho=A_0+\rho A_1,
\qquad
B_\rho=B_0+\rho^{-1}B_1,
\label{eq:rho-family}
\end{equation}
where each matrix satisfies the corresponding two-sided balance conditions. Define the projected matrices
\begin{equation}
\mathcal A_\rho=\left.PA_\rho P\right|_{T\Delta_m},
\qquad
\mathcal B_\rho=\left.PB_\rho P\right|_{T\Delta_m},
\label{eq:projected-matrices}
\end{equation}
and
\begin{equation}
\mathcal C_\rho=\mathcal A_\rho\mathcal B_\rho.
\label{eq:C-product}
\end{equation}
The strictly antagonistic class is
\begin{equation}
\mathcal B_\rho=-\chi\mathcal A_\rho^\top,
\qquad \chi>0.
\label{eq:antagonistic}
\end{equation}
Then $\mathcal C_\rho=-\chi\mathcal A_\rho\mathcal A_\rho^\top$ is symmetric nonpositive on $T\Delta_m$. Negative spectral branches are therefore the local signature of antagonistic coevolution.

For local analysis, write
\[
p=u+\alpha,
\qquad q=u+\beta,
\qquad x=u+\xi,
\qquad y=u+\eta,
\qquad \alpha,\beta,\xi,\eta\in T\Delta_m,
\]
and define
\begin{equation}
\mathcal L_X=\left.\mu_X(M_X-I)\right|_{T\Delta_m},
\qquad
\mathcal L_Y=\left.\mu_Y(M_Y-I)\right|_{T\Delta_m}.
\label{eq:mutation-linear-operators}
\end{equation}
The linearized tangent system is
\begin{equation}
\dot\alpha=\mathcal L_X\alpha+\frac{\lambda_X}{m}\mathcal A_\rho\eta(t-\sigma_X),
\qquad
\dot\beta=\mathcal L_Y\beta+\frac{\lambda_Y}{m}\mathcal B_\rho\xi(t-\sigma_Y),
\label{eq:revision-linear}
\end{equation}
\begin{equation}
\dot\xi=\kappa_X[\alpha(t-\theta_X)-\xi],
\qquad
\dot\eta=\kappa_Y[\beta(t-\theta_Y)-\eta].
\label{eq:deployment-linear}
\end{equation}
For general structured mutation, substitution of $e^{zt}$ in
\eqref{eq:revision-linear}--\eqref{eq:deployment-linear} gives the full block
characteristic determinant
\begin{equation}
\det
\begin{pmatrix}
zI-\mathcal L_X & 0 & 0 & -\dfrac{\lambda_X}{m}\mathcal A_\rho e^{-z\sigma_X} \\[0.5em]
0 & zI-\mathcal L_Y & -\dfrac{\lambda_Y}{m}\mathcal B_\rho e^{-z\sigma_Y} & 0 \\[0.5em]
-\kappa_X e^{-z\theta_X}I & 0 & (z+\kappa_X)I & 0 \\[0.5em]
0 & -\kappa_Y e^{-z\theta_Y}I & 0 & (z+\kappa_Y)I
\end{pmatrix}=0,
\label{eq:general-characteristic-block}
\end{equation}
in the ordering $(\alpha,\beta,\xi,\eta)$. The deployment blocks $(z+\kappa_X)I$
and $(z+\kappa_Y)I$ are invertible away from $z=-\kappa_X,-\kappa_Y$, and taking
the Schur complement in the deployment variables $\xi,\eta$ reduces
\eqref{eq:general-characteristic-block} to the equivalent determinant
\begin{equation}
\det
\begin{pmatrix}
zI-\mathcal L_X &
-\dfrac{\lambda_X\kappa_Y}{m(z+\kappa_Y)}\mathcal A_\rho e^{-z(\sigma_X+\theta_Y)} \\[0.8em]
-\dfrac{\lambda_Y\kappa_X}{m(z+\kappa_X)}\mathcal B_\rho e^{-z(\sigma_Y+\theta_X)} &
zI-\mathcal L_Y
\end{pmatrix}=0,
\qquad z\neq-\kappa_X,-\kappa_Y.
\label{eq:general-characteristic-determinant}
\end{equation}
For the analytical core we use uniform exploration, $M_X=M_Y=U$, so that $\mathcal L_X=-\mu_XI$ and $\mathcal L_Y=-\mu_YI$ on $T\Delta_m$. Then
\begin{equation}
\det\left[D(z)I-Ke^{-z\tau_\Sigma}\mathcal C_\rho\right]=0,
\label{eq:uniform-determinant-factor}
\end{equation}
where
\[
D(z)=(z+\mu_X)(z+\mu_Y)(z+\kappa_X)(z+\kappa_Y),
\quad
K=\frac{\lambda_X\lambda_Y\kappa_X\kappa_Y}{m^2},
\]
and
\begin{equation}
\tau_\Sigma=\sigma_X+\sigma_Y+\theta_X+\theta_Y.
\label{eq:total-delay}
\end{equation}
Thus each spectral value $\gamma_\rho\in\sigma(\mathcal C_\rho)$ gives the scalar factor
\begin{equation}
(z+\mu_X)(z+\mu_Y)(z+\kappa_X)(z+\kappa_Y)
=
\frac{\lambda_X\lambda_Y\kappa_X\kappa_Y}{m^2}\gamma_\rho e^{-z\tau_\Sigma}.
\label{eq:deployment-characteristic}
\end{equation}
The determinant factorization does not require diagonalizability of $\mathcal C_\rho$, although the Hopf reduction later assumes a simple critical spectral value. Since the highest-degree term is not delayed, the linearized system is retarded, not neutral. The factors $(z+\kappa_i)$ arise from the filter poles of the loop-transfer representation; they are not, in general, roots of the full characteristic equation, whose roots are shifted by the delayed coupling.

\emph{Interpretation.} The corresponding filter factors $z+\kappa_X$ and $z+\kappa_Y$ are the mathematical fingerprint of the implementation layer. They cannot be folded into selection or exploration rates. Two agents with identical information and identical revision rules can therefore differ in stability solely because one fields faster.

The phrase ``delay-induced instability'' requires a zero-hard-delay stability check. Let
\[
r_1=\mu_X,
\quad r_2=\mu_Y,
\quad r_3=\kappa_X,
\quad r_4=\kappa_Y,
\]
and define
\[
c_3=\sum_ir_i,
\quad c_2=\sum_{i<j}r_ir_j,
\quad c_1=\sum_{i<j<k}r_ir_jr_k,
\quad c_\ast=\prod_ir_i.
\]
For a real branch $\gamma_\rho$, set $\Gamma_\rho=K\gamma_\rho$.
\begin{lemma}[zero-delay stability for a real branch]
\label{lem:zero-delay-stability}
At $\tau_\Sigma=0$, the quartic associated with \eqref{eq:deployment-characteristic} is Hurwitz stable if and only if
\begin{equation}
c_0=c_\ast-\Gamma_\rho>0
\label{eq:RH-c0}
\end{equation}
and
\begin{equation}
(c_3c_2-c_1)c_1>c_3^2c_0.
\label{eq:RH-main}
\end{equation}
\end{lemma}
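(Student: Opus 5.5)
Setting $\tau_\Sigma=0$ in \eqref{eq:deployment-characteristic} turns the equation into a polynomial. Expanding $D(z)$ in elementary symmetric functions of the rates $r_1,\dots,r_4$ gives
\[
D(z)=z^4+c_3z^3+c_2z^2+c_1z+c_\ast ,
\]
so the zero-delay quartic is
\[
Q(z)=z^4+c_3z^3+c_2z^2+c_1z+c_0,\qquad c_0=c_\ast-\Gamma_\rho .
\]
Because $\gamma_\rho$ is real, all coefficients of $Q$ are real, and the classical Routh--Hurwitz criterion for a real monic quartic applies. It states that $Q$ is Hurwitz stable if and only if the Hurwitz determinants
\[
\Delta_1=c_3,\qquad \Delta_2=c_3c_2-c_1,\qquad \Delta_3=(c_3c_2-c_1)c_1-c_3^2c_0,\qquad \Delta_4=c_0\Delta_3
\]
are all positive. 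Equivalently, all coefficients are positive, $c_3c_2>c_1$, and $\Delta_3>0$.

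The proof then shows that, apart from \eqref{eq:RH-c0} and \eqref{eq:RH-main}, every condition holds automatically because $r_i>0$. We may assume the exploration rates $\mu_X,\mu_Y$ are positive, as in the uniform-exploration setting; the $\kappa_i$ are positive by assumption. Then $c_3,c_2,c_1>0$ trivially. For $\Delta_2$, expand $c_3c_2=\sum_i r_i\sum_{j<k}r_jr_k$. This product contains each triple product $r_ir_jr_k$ with $i<j<k$ exactly three times, plus the positive terms $r_i^2r_j$. Hence
\[
c_3c_2-c_1=2c_1+\sum_{i\ne j}r_i^2r_j>0 .
\]
Given $c_0>0$, the condition $\Delta_4>0$ is equivalent to $\Delta_3>0$, which is exactly \eqref{eq:RH-main}. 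So the necessary and sufficient conditions reduce to $c_0>0$ together with \eqref{eq:RH-main}.

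Two directions remain to be checked. \emph{Sufficiency} is the Routh--Hurwitz theorem together with the positivity just established. For \emph{necessity}, $c_0>0$ is needed because $Q(0)=c_0$: if $c_0\le0$, then $Q$ has a real root $z\ge0$, since $Q(z)\to+\infty$ as $z\to+\infty$. The condition $\Delta_3>0$ is needed by the "only if" part of Routh--Hurwitz.

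The main obstacle, a mild one, is to handle the boundary cases cleanly. When $\Delta_3=0$ there is a purely imaginary pair, which Routh--Hurwitz already excludes from stability, so the strict inequalities are exactly right. It is also worth noting that for negative branches $\Gamma_\rho<0$, so $c_0>c_\ast>0$ and only \eqref{eq:RH-main} can fail. This is the filter-induced margin used later.
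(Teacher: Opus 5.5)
Your proposal is correct and follows the paper's proof. Both reduce to the real quartic $z^4+c_3z^3+c_2z^2+c_1z+c_0$, apply the quartic Routh--Hurwitz criterion, and note that positivity of the $r_i$ makes $c_3,c_2,c_1>0$ and $c_3c_2-c_1>0$ automatic, leaving exactly $c_0>0$ and $(c_3c_2-c_1)c_1>c_3^2c_0$. Your identity $c_3c_2-c_1=2c_1+\sum_{i\ne j}r_i^2r_j$ and the intermediate-value argument for the necessity of $c_0>0$ fill in steps that the paper only asserts.
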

The proof is the quartic Routh--Hurwitz criterion and is recalled in Appendix~\ref{app:sec2}.

\begin{remark}[hard-delay-free implementation-filter instability]
\label{rem:filter-only-instability}
Let
\[
H=\frac{(c_3c_2-c_1)c_1}{c_3^2}.
\]
Then zero-delay stability is equivalent to $c_\ast-H<\Gamma_\rho<c_\ast$. For a negative antagonistic branch, $\Gamma_\rho=-K|\gamma_\rho|$, so zero-hard-delay stability is equivalent to $K|\gamma_\rho|<H-c_\ast$. If this fails, the implementation filters themselves destabilize the standoff even when $\tau_\Sigma=0$.
\end{remark}

\emph{Interpretation.} There are two oscillatory mechanisms. One is stale feedback: a race that would settle begins to oscillate because observation and deployment are too slow. The other is fielding inertia under strong antagonism: implementation filters can supply enough phase lag to destabilize the system without hard delay. These mechanisms suggest different fixes.

Finally, balanced scalar observables inherit a second harmonic.
\begin{corollary}[second harmonic of balanced performance observables]
\label{cor:performance-second-harmonic}
Assume barycentric balance. If a critical deployed Hopf mode has
\[
\xi(t)=\varepsilon\operatorname{Re}(v e^{i\omega_\ast t})+\mathcal O(\varepsilon^2),
\qquad
\eta(t)=\varepsilon\operatorname{Re}(w e^{i\omega_\ast t})+\mathcal O(\varepsilon^2),
\]
then
\[
\Phi_X(x(t),y(t))-\Phi_X(u,u)
=
\frac{\varepsilon^2}{2}\operatorname{Re}(v^\top A_\rho\overline w)
+
\frac{\varepsilon^2}{2}\operatorname{Re}(v^\top A_\rho w e^{2i\omega_\ast t})
+
\mathcal O(\varepsilon^3),
\]
and analogously for $\Phi_Y$ with $B_\rho$. Thus the leading aggregate response is a mean shift plus a second harmonic at $2\omega_\ast$.
\end{corollary}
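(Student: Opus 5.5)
The plan is to substitute the assumed expansions into the exact bilinear identity \eqref{eq:PhiX-bilinear} and expand the product of two real parts. By Assumption~\ref{ass:barycentric-balance}, writing $x=u+\xi$ and $y=u+\eta$ with $\xi,\eta\in T\Delta_m$ gives
\[
\Phi_X(x,y)-\Phi_X(u,u)=\xi^\top A_\rho\eta
\]
exactly, with no truncation. The cross terms $u^\top A_\rho\eta$ and $\xi^\top A_\rho u$ vanish, because $A_\rho^\top u=a_\rho\mathbf 1$ and $A_\rho u=a_\rho\mathbf 1$ are orthogonal to $T\Delta_m$. Hence the deviation of the performance observable is purely quadratic in the deployed perturbations, and no first-order term can appear.

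Next I insert $\xi=\varepsilon\xi_1+\mathcal O(\varepsilon^2)$ and $\eta=\varepsilon\eta_1+\mathcal O(\varepsilon^2)$, where $\xi_1=\operatorname{Re}(ve^{i\omega_\ast t})$ and $\eta_1=\operatorname{Re}(we^{i\omega_\ast t})$. Bilinearity gives
\[
\xi^\top A_\rho\eta=\varepsilon^2\xi_1^\top A_\rho\eta_1+\mathcal O(\varepsilon^3).
\]
The remainder is controlled because $\xi_1$ and $\eta_1$ are bounded uniformly in $t$, and the $\mathcal O(\varepsilon^2)$ corrections are assumed uniform on the time window considered (for instance along the Hopf periodic orbit). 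So the cross terms between the leading modes and the corrections are $\mathcal O(\varepsilon^3)$.

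The computation itself uses $\operatorname{Re}a=(a+\bar a)/2$ for both factors, so that
\[
\xi_1^\top A_\rho\eta_1=\tfrac14\bigl(v^\top A_\rho w\,e^{2i\omega_\ast t}+v^\top A_\rho\bar w+\bar v^\top A_\rho w+\bar v^\top A_\rho\bar w\,e^{-2i\omega_\ast t}\bigr).
\]
Since $A_\rho$ is real, the last two terms are the complex conjugates of the first two. This yields $\tfrac12\operatorname{Re}(v^\top A_\rho\bar w)+\tfrac12\operatorname{Re}(v^\top A_\rho w\,e^{2i\omega_\ast t})$, which is the stated formula. The same argument with \eqref{eq:PhiY-bilinear}, the roles of $\xi$ and $\eta$ swapped, and $B_\rho$ in place of $A_\rho$ gives the $\Phi_Y$ statement. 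The constant term is the mean shift, and the remaining term oscillates at $2\omega_\ast$.

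There is no real obstacle here; the argument is elementary algebra. The only point needing care is to state explicitly that the remainder is uniform in $t$ and that the vectors $v,w$ lie in the complexified tangent space. The second condition is what makes the balance cancellation in the first step legitimate.
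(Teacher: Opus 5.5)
Your proof is correct and matches the paper's argument: the exact bilinear identity \eqref{eq:PhiX-bilinear} (and \eqref{eq:PhiY-bilinear} for $\Phi_Y$), followed by direct multiplication of the two real parts, using that $A_\rho$ is real. One small correction to your closing remark: the balance cancellation is applied to the exact perturbations $\xi,\eta$, which lie in $T\Delta_m$ automatically because $x,y\in\Delta_m$ by simplex invariance, so it does not rest on any separate hypothesis that $v,w$ are tangent.
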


\emph{Interpretation.} A measured effectiveness cycle of period $T$ can correspond to an underlying compositional race of period $2T$. Conversely, a strong first harmonic in headline effectiveness signals that the balanced core is incomplete, for instance because scale is moving at first order.

If an aggregate effective output is needed, we write $E_i(t)=N_i(t)\Phi_i(t)$, where $N_i$ is a slow capacity or scale variable. The analytical core treats $N_i$ as frozen; slow capacity dynamics are discussed in the limitations and in future work, not used in the Hopf calculation.

\section{Delayed covariance and lagged selection}
\label{sec:cov}
\label{sec:delayed-covariance}

The previous section separated revision portfolios from deployed portfolios. We now record the exact fitness identity associated with this two-layer architecture. The qualitative replacement of a variance term by a covariance term under delayed fitness information is familiar from delayed replicator models; the point here is more specific. In the coupled revision--deployment system, the identity contains an additional deployment-coupling term that has no analogue in a single-layer delayed replicator--mutator equation. The identity is used as an interpretation of lagged selection; the Hopf calculation itself proceeds through the characteristic factor \eqref{eq:deployment-characteristic}.

For $r\in\Delta_m$ and $g,h\in\mathbb R^m$, define
\[
\langle g\rangle_r=r^\top g,
\qquad
\operatorname{Cov}_r(g,h)=r^\top(g\odot h)-(r^\top g)(r^\top h),
\]
so that $\operatorname{Var}_r(g)=\operatorname{Cov}_r(g,g)$. We evaluate the revision portfolios against the currently deployed opponent portfolios by
\[
\overline F_X(t)=p(t)^\top A_\rho y(t),
\qquad
\overline F_Y(t)=q(t)^\top B_\rho x(t).
\]
These quantities are not the deployed performance observables $\Phi_X=x^\top A_\rho y$ and $\Phi_Y=y^\top B_\rho x$; rather, they measure how the current revision portfolios would perform against the currently deployed opponent portfolios.

\begin{proposition}[delayed-covariance identity]
\label{prop:delayed-covariance}
Along solutions of \eqref{eq:p-dynamics}--\eqref{eq:y-deployment},
\begin{equation}
\begin{aligned}
\frac{d}{dt}\overline F_X(t)
&=\lambda_X\operatorname{Cov}_{p(t)}
\left(A_\rho y(t),A_\rho y(t-\sigma_X)\right) \\
&\quad +\mu_X(M_Xp(t)-p(t))^\top A_\rho y(t) \\
&\quad +\kappa_Yp(t)^\top A_\rho\bigl[q(t-\theta_Y)-y(t)\bigr],
\end{aligned}
\label{eq:delayed-covariance-X}
\end{equation}
\begin{equation}
\begin{aligned}
\frac{d}{dt}\overline F_Y(t)
&=\lambda_Y\operatorname{Cov}_{q(t)}
\left(B_\rho x(t),B_\rho x(t-\sigma_Y)\right) \\
&\quad +\mu_Y(M_Yq(t)-q(t))^\top B_\rho x(t) \\
&\quad +\kappa_Xq(t)^\top B_\rho\bigl[p(t-\theta_X)-x(t)\bigr].
\end{aligned}
\label{eq:delayed-covariance-Y}
\end{equation}
\end{proposition}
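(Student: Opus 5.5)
The plan is a direct product-rule computation. I will first note that $\overline F_X(t)=p(t)^\top A_\rho y(t)$ is differentiable: $p$ is $C^1$ by \eqref{eq:p-dynamics}, and $y$ is $C^1$ by \eqref{eq:y-deployment}, since continuity of the history makes the right-hand side continuous. The product rule then gives
\[
\frac{d}{dt}\overline F_X=\dot p^\top A_\rho y(t)+p(t)^\top A_\rho\dot y(t).
\]
I will substitute \eqref{eq:p-dynamics} into the first term and \eqref{eq:y-deployment} into the second. The second substitution directly produces the deployment-coupling term $\kappa_Y p(t)^\top A_\rho[q(t-\theta_Y)-y(t)]$. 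The exploration part of $\dot p$ directly produces $\mu_X(M_Xp-p)^\top A_\rho y(t)$.

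The only step needing care is the selection part. Write $g=A_\rho y(t)$ and $h=A_\rho y(t-\sigma_X)$. The selection contribution to $\dot p$ is $\lambda_X\,p\odot[h-(p^\top h)\mathbf 1]$. Pairing it with $g$ gives
\[
\lambda_X\bigl[(p\odot h)^\top g-(p^\top h)(p^\top\mathbf 1\odot\ldots)\bigr],
\]
which I will evaluate as
\[
\lambda_X\bigl[p^\top(g\odot h)-(p^\top h)\,(p\odot\mathbf 1)^\top g\bigr]
=\lambda_X\bigl[p^\top(g\odot h)-(p^\top h)(p^\top g)\bigr],
\]
using $(p\odot\mathbf 1)^\top g=p^\top g$. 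This is exactly $\lambda_X\operatorname{Cov}_{p(t)}(g,h)$ by the definition above, and the covariance is symmetric, so the argument order matches \eqref{eq:delayed-covariance-X}. It is worth stressing that the current normalization $p(t)^\top h$ in \eqref{eq:p-dynamics} is what makes the centering term a covariance with respect to the same weight $p(t)$. Note also that no simplex constraint on $p$ is needed beyond the algebraic identity. Proposition~\ref{prop:invariance} merely guarantees that $p(t)\in\Delta_m$, so the covariance is a genuine $p(t)$-weighted covariance.

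The $Y$ identity \eqref{eq:delayed-covariance-Y} follows by the symmetric substitution $(p,y,A_\rho,\lambda_X,\mu_X,M_X,\sigma_X,\kappa_Y,\theta_Y,q)\mapsto(q,x,B_\rho,\lambda_Y,\mu_Y,M_Y,\sigma_Y,\kappa_X,\theta_X,p)$ applied to \eqref{eq:q-dynamics} and \eqref{eq:x-deployment}. I expect no real obstacle. The only point to state explicitly is differentiability for $t\ge0$, including at $t=0$ as a one-sided derivative, given continuous histories.
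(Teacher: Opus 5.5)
Your proposal is correct and follows the paper's proof essentially line for line. You apply the product rule to $p^\top A_\rho y$, read off the exploration and deployment terms directly, and the selection term collapses to $\lambda_X\operatorname{Cov}_{p(t)}(A_\rho y(t),A_\rho y(t-\sigma_X))$ because the centering uses the current weight $p(t)$. One cosmetic fix: replace the placeholder $(p^\top\mathbf 1\odot\ldots)$ in your first display with $(p^\top g)$, which is what your next line already correctly uses.
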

The proof is a direct differentiation and is given in Appendix~\ref{app:sec3}. The transition from an instantaneous variance to a lagged covariance is not claimed here as a new general principle; the new term in Proposition~\ref{prop:delayed-covariance} is the deployment-coupling contribution.

\emph{Interpretation.} A population does not simply learn from the opponent. It learns from a delayed picture of an opponent whose deployed portfolio is itself still catching up with past revisions. The covariance term measures whether that delayed picture still points in the same direction as the current fitness landscape; the deployment term measures the additional drift caused by the opponent's implementation pipeline.

If $\sigma_X=0$, the selection term in \eqref{eq:delayed-covariance-X} becomes the usual variance,
\[
\lambda_X\operatorname{Var}_{p(t)}(A_\rho y(t)).
\]
For small observation delay,
\begin{equation}
\operatorname{Cov}_{p(t)}\left(A_\rho y(t),A_\rho y(t-\sigma_X)\right)
=
\operatorname{Var}_{p(t)}\left(A_\rho y(t)\right)
-
\sigma_X\operatorname{Cov}_{p(t)}\left(A_\rho y(t),A_\rho\dot y(t)\right)
+\mathcal O(\sigma_X^2).
\label{eq:small-delay-covariance}
\end{equation}
Thus the first-order approximation predicts a sign reversal of the selection term when
\begin{equation}
\sigma_X\operatorname{Cov}_{p(t)}\left(A_\rho y(t),A_\rho\dot y(t)\right)
>
\operatorname{Var}_{p(t)}\left(A_\rho y(t)\right),
\label{eq:anti-learning-threshold-X}
\end{equation}
and analogously for population $Y$,
\begin{equation}
\sigma_Y\operatorname{Cov}_{q(t)}\left(B_\rho x(t),B_\rho\dot x(t)\right)
>
\operatorname{Var}_{q(t)}\left(B_\rho x(t)\right).
\label{eq:anti-learning-threshold-Y}
\end{equation}
These are local diagnostics, not global stability criteria; the spectral Hopf threshold below gives the corresponding condition for persistent oscillatory coevolution.

\emph{Interpretation.} The inequalities say when learning from delayed information turns into anti-learning. In operational language, the agent optimizes against yesterday's opponent: the delayed signal still reports an advantage that the current environment has already erased or reversed. The Hopf threshold in the next section is the global linear version of this mechanism, describing when stale-feedback episodes reinforce rather than damp one another.

Near the barycenter, the same covariance mechanism becomes second order in the compositional perturbation. Under Assumption~\ref{ass:barycentric-balance}, write
\[
p=u+\alpha,
\qquad
q=u+\beta,
\qquad
x=u+\xi,
\qquad
y=u+\eta,
\qquad
\alpha,\beta,\xi,\eta\in T\Delta_m.
\]
Then the constant components of the fitness vectors cancel from the covariance, giving the following leading form.

\begin{corollary}[leading covariance near the barycenter]
\label{cor:leading-covariance}
Under Assumption~\ref{ass:barycentric-balance},
\begin{equation}
\operatorname{Cov}_{p(t)}\left(A_\rho y(t),A_\rho y(t-\sigma_X)\right)
=
\frac1m
\left(\mathcal A_\rho\eta(t)\right)^\top
\left(\mathcal A_\rho\eta(t-\sigma_X)\right)
+\mathcal O(\|(\alpha,\eta)\|^3),
\label{eq:leading-covariance-X}
\end{equation}
with the analogous identity
\begin{equation}
\operatorname{Cov}_{q(t)}\left(B_\rho x(t),B_\rho x(t-\sigma_Y)\right)
=
\frac1m
\left(\mathcal B_\rho\xi(t)\right)^\top
\left(\mathcal B_\rho\xi(t-\sigma_Y)\right)
+\mathcal O(\|(\beta,\xi)\|^3).
\label{eq:leading-covariance-Y}
\end{equation}
\end{corollary}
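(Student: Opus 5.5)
The plan is to expand the covariance bilinearly around the barycenter, use the two-sided balance conditions to strip constant components out of the fitness vectors, and then count orders. Write $y(t)=u+\eta(t)$ and $y(t-\sigma_X)=u+\eta(t-\sigma_X)$. By \eqref{eq:A-balance}, $A_\rho u=a_\rho\mathbf 1$, so
\[
A_\rho y(t)=a_\rho\mathbf 1+A_\rho\eta(t),
\qquad
A_\rho y(t-\sigma_X)=a_\rho\mathbf 1+A_\rho\eta(t-\sigma_X).
\]
The covariance $\operatorname{Cov}_r(g,h)$ is bilinear in $(g,h)$. Since $r^\top\mathbf 1=1$ for $r\in\Delta_m$, it vanishes whenever either argument is a multiple of $\mathbf 1$. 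Hence the constant parts drop out exactly, and
\[
\operatorname{Cov}_{p}\bigl(A_\rho y(t),A_\rho y(t-\sigma_X)\bigr)=\operatorname{Cov}_{p}\bigl(A_\rho\eta(t),A_\rho\eta(t-\sigma_X)\bigr).
\]

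Next, replace $A_\rho$ by its projection. Since $\eta\in T\Delta_m$, we have $P\eta=\eta$. Also $A_\rho\eta=PA_\rho P\eta+\tfrac1m\mathbf 1\mathbf 1^\top A_\rho\eta$. The last term is a multiple of $\mathbf 1$; in fact it is zero, because $\mathbf 1^\top A_\rho=m a_\rho u^\top$ by $A_\rho^\top u=a_\rho\mathbf 1$, and $u^\top\eta=0$. So $A_\rho\eta=\mathcal A_\rho\eta\in T\Delta_m$, and the covariance equals $\operatorname{Cov}_p(g,h)$ with $g=\mathcal A_\rho\eta(t)$ and $h=\mathcal A_\rho\eta(t-\sigma_X)$, both tangent vectors.

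Now substitute $p=u+\alpha$:
\[
\operatorname{Cov}_p(g,h)=u^\top(g\odot h)+\alpha^\top(g\odot h)-(u^\top g+\alpha^\top g)(u^\top h+\alpha^\top h).
\]
Since $u^\top g=u^\top h=0$, the product term is $(\alpha^\top g)(\alpha^\top h)$. The remaining terms give
\[
\operatorname{Cov}_p(g,h)=\tfrac1m g^\top h+\alpha^\top(g\odot h)-(\alpha^\top g)(\alpha^\top h),
\]
and the leading term is the claimed $\tfrac1m g^\top h$.

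The only delicate point, and the main obstacle, is the error term. The remainder $\alpha^\top(g\odot h)$ is cubic, being linear in $\alpha$ and bilinear in $(\eta(t),\eta(t-\sigma_X))$. The remainder $(\alpha^\top g)(\alpha^\top h)$ is quartic. Both involve the delayed value $\eta(t-\sigma_X)$, so the norm $\|(\alpha,\eta)\|$ must be read as covering the relevant history, e.g.\ the sup over $[t-\sigma_X,t]$, or pointwise in both $\eta(t)$ and $\eta(t-\sigma_X)$. I would state this convention explicitly and bound the remainder by $C\|\alpha\|\,\|\eta(t)\|\,\|\eta(t-\sigma_X)\|$, with $C$ depending only on $\|\mathcal A_\rho\|$; the quartic term is of higher order still. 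The $Y$ identity follows verbatim from \eqref{eq:B-balance}, with $x=u+\xi$, $q=u+\beta$ and $\mathcal B_\rho$ in place of $\mathcal A_\rho$.
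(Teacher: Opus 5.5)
Your proposal is correct and is essentially the paper's proof: you strip the constant $a_\rho\mathbf 1$ using $r^\top\mathbf 1=1$, identify $A_\rho\eta=\mathcal A_\rho\eta\in T\Delta_m$, and expand at $p=u+\alpha$ to isolate $\tfrac1m g^\top h$ plus the cubic remainder $\alpha^\top(g\odot h)$ and the quartic remainder $(\alpha^\top g)(\alpha^\top h)$. There is one minor slip: left balance gives $\mathbf 1^\top A_\rho=m a_\rho\mathbf 1^\top$, not $m a_\rho u^\top$, but this does not affect $\mathbf 1^\top A_\rho\eta=0$; your remark that the norm must also control $\eta(t-\sigma_X)$ is a useful clarification that the paper leaves implicit.
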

The proof, as well as the Hopf-mode expansion of \eqref{eq:leading-covariance-X}, is given in Appendix~\ref{app:sec3}. That expansion has the same structure as Corollary~\ref{cor:performance-second-harmonic}: a balanced scalar quantity inherits a mean shift and a second harmonic rather than a leading first harmonic.

\begin{remark}[balanced scalar observables inherit a second harmonic]
\label{rem:balanced-scalar-second-harmonic}
If a scalar observable $\Psi$ has no linear variation at the barycenter and its leading term is bilinear,
\[
\Psi(u+\xi,u+\eta)-\Psi(u,u)=\xi^\top H\eta+\mathcal O(\|(\xi,\eta)\|^3),
\]
then a Hopf-mode perturbation at frequency $\omega_\ast$ produces a mean shift and a second harmonic at $2\omega_\ast$:
\[
\Psi(t)-\Psi(u,u)
=
\frac{\varepsilon^2}{2}\operatorname{Re}(v^\top H\overline w)
+
\frac{\varepsilon^2}{2}\operatorname{Re}(v^\top H w e^{2i\omega_\ast t})
+
\mathcal O(\varepsilon^3).
\]
Thus balanced scalar observables, including performance and selection-gain observables, do not generically oscillate at the Hopf frequency. A robust first harmonic in a scalar effectiveness measure indicates that the balanced core is incomplete, for example because a left-balance condition is broken or because capacity is moving at first order.
\end{remark}

\emph{Interpretation.} Scalar indicators can be misleading in two ways: they are filtered by the deployment layer, and, under balance, their leading oscillatory response is second order. Observing both the portfolio composition and the scalar output is therefore more informative than observing headline performance alone.

The next section uses the characteristic factor \eqref{eq:deployment-characteristic} to determine when lagged feedback destabilizes the barycentric equilibrium and produces oscillatory coevolution.

\section{Linear stability and Hopf bifurcation}
\label{sec:lin}

We now extract the stability threshold from the scalar characteristic factor
\eqref{eq:deployment-characteristic}. Throughout this section,
\[
\tau=\tau_\Sigma=\sigma_X+\sigma_Y+\theta_X+\theta_Y
\]
denotes the total hard feedback delay, and
\[
r_1=\mu_X,\qquad r_2=\mu_Y,\qquad r_3=\kappa_X,\qquad r_4=\kappa_Y.
\]
Set
\begin{equation}
R(z)=\prod_{j=1}^4(z+r_j)
=(z+\mu_X)(z+\mu_Y)(z+\kappa_X)(z+\kappa_Y),
\label{eq:R-polynomial}
\end{equation}
\begin{equation}
K=\frac{\lambda_X\lambda_Y\kappa_X\kappa_Y}{m^2}.
\label{eq:K-coupling}
\end{equation}
For a spectral value \(\gamma\in\sigma(\mathcal C_\rho)\), the corresponding
branch of the characteristic equation is
\begin{equation}
F_\gamma(z,\tau):=R(z)-K\gamma e^{-z\tau}=0.
\label{eq:scalar-characteristic-branch}
\end{equation}
Since the highest-degree term is not delayed, this is a retarded, not neutral,
quasipolynomial. In the strictly antagonistic class \eqref{eq:antagonistic},
\[
\mathcal C_\rho=-\chi\mathcal A_\rho\mathcal A_\rho^\top
\]
is symmetric and nonpositive on \(T\Delta_m\). Thus all nonzero branches relevant
for oscillation are real and negative. For such a branch, write
\begin{equation}
L=-K\gamma=K|\gamma|>0,
\label{eq:L-negative-branch}
\end{equation}
so that
\begin{equation}
F_\gamma(z,\tau)=R(z)+L e^{-z\tau}=0.
\label{eq:negative-branch-characteristic}
\end{equation}

\paragraph{Imaginary roots and the delay-induced window}
Let \(z=i\omega\), \(\omega>0\). Then \eqref{eq:negative-branch-characteristic}
has an imaginary root only if
\begin{equation}
R(i\omega)+L e^{-i\omega\tau}=0.
\label{eq:imaginary-root-equation}
\end{equation}
Taking moduli gives
\begin{equation}
L=|R(i\omega)|=
\left[\prod_{j=1}^4(r_j^2+\omega^2)\right]^{1/2}.
\label{eq:modulus-condition}
\end{equation}
The right-hand side is strictly increasing in \(\omega>0\) and equals
\[
c_\ast:=\prod_{j=1}^4r_j=\mu_X\mu_Y\kappa_X\kappa_Y
\]
at \(\omega=0\). Hence a unique positive frequency \(\omega_\ast\) exists if and
only if
\begin{equation}
L>c_\ast,
\label{eq:frequency-existence-condition}
\end{equation}
and is determined by
\begin{equation}
\prod_{j=1}^4(r_j^2+\omega_\ast^2)=L^2.
\label{eq:omega-star-equation}
\end{equation}
Define
\begin{equation}
\phi(\omega)=\arg R(i\omega)=\sum_{j=1}^4\arctan\frac{\omega}{r_j}.
\label{eq:phase-function}
\end{equation}
Here \(\phi(\omega_\ast)\) is the open-loop phase of the lag cascade \(R\) at the
gain-crossover frequency fixed by \eqref{eq:omega-star-equation}; the
imaginary-axis condition is then the Nyquist phase-crossover condition for the
delayed antagonistic feedback loop, the hard delay supplying the residual phase
\(\pi-\phi(\omega_\ast)\). The corresponding critical delays are
\begin{equation}
\tau_k(\gamma)=\frac{\pi-\phi(\omega_\ast)+2\pi k}{\omega_\ast},
\qquad \tau_k(\gamma)>0,
\label{eq:critical-delays-negative}
\end{equation}
and the first such value is
\begin{equation}
\tau_{\rm first}(\gamma)=\min\{\tau_k(\gamma)>0:k\in\mathbb Z\}.
\label{eq:first-delay-branch}
\end{equation}
Combining \eqref{eq:frequency-existence-condition} with the zero-delay
Routh--Hurwitz margin from Lemma~\ref{lem:zero-delay-stability}, a real
negative branch lies in the genuinely delay-induced window precisely when
\begin{equation}
c_\ast<L<H-c_\ast,
\qquad
H=\frac{(c_3c_2-c_1)c_1}{c_3^2}.
\label{eq:delay-induced-window}
\end{equation}
The lower bound creates a positive Hopf frequency; the upper bound prevents the
implementation filters from destabilizing the system already at zero hard delay.

\begin{lemma}[secant bound for the implementation-filter margin]
\label{lem:secant-margin}
Let \(r_1,\ldots,r_4>0\), and define
\[
c_\ast=\prod_{j=1}^4r_j,
\qquad
H=\frac{(c_3c_2-c_1)c_1}{c_3^2},
\]
where \(c_3,c_2,c_1\) are the elementary symmetric sums of degrees one, two,
and three in \(r_1,\ldots,r_4\). Then
\begin{equation}
H-c_\ast\geq4c_\ast,
\qquad\text{equivalently}\qquad
H\geq5c_\ast.
\label{eq:H-five-cstar}
\end{equation}
Equality holds if and only if \(r_1=r_2=r_3=r_4\). Consequently, the interval
\(c_\ast<L<H-c_\ast\) is always nonempty.
\end{lemma}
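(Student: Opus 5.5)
The plan is to treat \eqref{eq:H-five-cstar} as a symmetric polynomial inequality in the elementary symmetric functions $c_3=e_1$, $c_2=e_2$, $c_1=e_3$, $c_\ast=e_4$ of $r_1,\dots,r_4>0$. Clearing the positive denominator $c_3^2$, the claim $H\ge 5c_\ast$ is equivalent to
\[
c_3c_2c_1-c_1^2\;\ge\;5\,c_\ast c_3^2 .
\]
I would prove this with Newton's inequalities, applied to the normalized symmetric means
\[
S_1=\tfrac{c_3}{4},\qquad S_2=\tfrac{c_2}{6},\qquad S_3=\tfrac{c_1}{4},\qquad S_4=c_\ast .
\]
In these variables the target becomes
\[
96\,S_1S_2S_3-16\,S_3^2\;\ge\;80\,S_1^2S_4 .
\]

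Newton's inequalities for positive reals give $S_1^2\ge S_2$, $S_2^2\ge S_1S_3$ and $S_3^2\ge S_2S_4$. Two consequences are needed.

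First, $S_3\le S_1S_2$. This follows from $S_1S_3\le S_2^2\le S_1^2S_2$ after dividing by $S_1>0$. Using it in the target,
\[
96\,S_1S_2S_3-16\,S_3^2\;\ge\;96\,S_1S_2S_3-16\,S_1S_2S_3\;=\;80\,S_1S_2S_3 .
\]
So it remains to show $S_2S_3\ge S_1S_4$.

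Second, $S_1S_4\le S_2S_3$. Multiply the two Newton inequalities $S_1S_3\le S_2^2$ and $S_2S_4\le S_3^2$ to get $S_1S_2S_3S_4\le S_2^2S_3^2$, then divide by $S_2S_3>0$.

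Combining the two steps gives $96S_1S_2S_3-16S_3^2\ge 80S_1S_2S_3\ge 80S_1^2S_4$, which is the claim.

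For the equality case, if $r_1=\dots=r_4=r$ then $H=(24r^3\cdot 4r^3)/(16r^2)\cdot\ldots=5r^4=5c_\ast$, which is checked directly. Conversely, equality in the chain forces equality in the Newton inequality $S_2^2\ge S_1S_3$ (it is used in both steps). For positive reals, equality in a Newton inequality holds only when all the $r_j$ coincide. This is the step I would state most carefully, citing the standard equality clause of Newton's inequalities.

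Nonemptiness of the window is then immediate: $H-c_\ast\ge 4c_\ast>c_\ast$, because $c_\ast>0$.

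The main obstacle is only bookkeeping: matching the binomial normalizations $\binom{4}{k}$ correctly, and checking that the right Newton inequalities are chained so the constant $5$ comes out exactly. The inequality itself reduces cleanly to the two consequences above.
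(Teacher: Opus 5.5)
Your proof is correct, and it takes a different route from the paper's. The normalizations check out. In the original variables your two Newton consequences read $c_1\le c_3c_2/6$ and $6c_3c_\ast\le c_2c_1$. Hence $c_3c_2c_1-c_1^2\ge\frac56 c_3c_2c_1\ge 5c_3^2c_\ast$, exactly the target.

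The paper argues through the zero-delay Routh--Hurwitz boundary instead:
\begin{itemize}
\item At $L=H-c_\ast$ the quartic $R(z)+L$ has the imaginary pair $\pm i\omega_0$, with $\omega_0^2=c_1/c_3$.
\item So $R(i\omega_0)=-L$ is real, which forces the phases $\theta_j=\arctan(\omega_0/r_j)\in(0,\pi/2)$ to sum to $\pi$.
\item Then $(H-c_\ast)/c_\ast=|R(i\omega_0)|/R(0)=\prod_j\sec\theta_j$.
\item Jensen's inequality for the strictly convex function $\log\sec$ gives $\prod_j\sec\theta_j\ge\sec^4(\pi/4)=4$, with equality iff all $\theta_j$, hence all $r_j$, coincide.
\end{itemize}

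What each route buys:
\begin{itemize}
\item Yours is purely algebraic and self-contained. It never has to identify the Hurwitz boundary with a phase crossover or check that the phase sum is $\pi$ rather than another multiple.
\item The paper's is more conceptual. It explains the constant $4$ as the gain of four lag poles sharing a total phase of $\pi$ equally, which is the geometry reused in the remark on the two oscillatory mechanisms.
\item The paper's Jensen step also adapts directly to any number $n\ge3$ of real lag poles, giving a zero-delay gain margin of at least $\sec^n(\pi/n)$ times the DC gain. Your Newton chain is tied to the specific degree-four Hurwitz expression.
\end{itemize}

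Two small polishing points.
\begin{itemize}
\item Your equal-case computation is garbled: $(24r^3\cdot 4r^3)/(16r^2)=6r^4$ is $c_3c_2c_1/c_3^2$, not $H$. It should read $H=(24r^3-4r^3)\cdot 4r^3/(16r^2)=5r^4$. Alternatively, note that every Newton inequality is an equality when the $r_j$ coincide, so the whole chain is.
\item For the converse you need not invoke the general equality clause of Newton's inequalities. Equality in your first step requires equality throughout $S_1S_3\le S_2^2\le S_1^2S_2$, so it already forces $S_2=S_1^2$. For $n=4$ one has $S_1^2-S_2=\frac{1}{48}\sum_{i<j}(r_i-r_j)^2$, which vanishes only when all $r_j$ are equal.
\end{itemize}
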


\begin{proof}
See Appendix~\ref{app:sec4}.
\end{proof}

\emph{Interpretation.}
The interval in Lemma~\ref{lem:secant-margin} is the gain range in which a
branch is strong enough to oscillate but not so strong that implementation
inertia destabilizes it already at zero hard delay. Thus stronger antagonism
shrinks the delay margin: as the opposition between the two adaptive responses
becomes sharper, less stale feedback is needed to push the system into a
persistent cycle.

\paragraph{Transversality}
For \(\omega>0\), write
\begin{equation}
\frac{R'(i\omega)}{R(i\omega)}
=\sum_{j=1}^4\frac{1}{r_j+i\omega}
=a(\omega)-i b(\omega),
\label{eq:log-derivative}
\end{equation}
where
\begin{equation}
a(\omega)=\sum_{j=1}^4\frac{r_j}{r_j^2+\omega^2}>0,
\qquad
b(\omega)=\sum_{j=1}^4\frac{\omega}{r_j^2+\omega^2}>0.
\label{eq:a-b-definitions}
\end{equation}

\begin{lemma}[transversality for a real negative branch]
\label{lem:transversality-negative-branch}
Let \(\gamma<0\), and suppose that \(z=i\omega_\ast\), \(\omega_\ast>0\), is a
simple root of \eqref{eq:negative-branch-characteristic} at
\(\tau=\tau_k(\gamma)\). Then the root crosses the imaginary axis transversely
as \(\tau\) varies. More precisely,
\begin{equation}
\left.
\frac{d}{d\tau}\operatorname{Re}z(\tau)
\right|_{z=i\omega_\ast,\,\tau=\tau_k}
=
\frac{\omega_\ast b(\omega_\ast)}{\bigl(a(\omega_\ast)+\tau_k\bigr)^2+b(\omega_\ast)^2}>0.
\label{eq:transversality-formula}
\end{equation}
\end{lemma}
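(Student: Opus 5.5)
The approach is implicit differentiation of $F(z,\tau)=R(z)+Le^{-z\tau}$ along the root branch $z(\tau)$ through $(i\omega_\ast,\tau_k)$. Because the root is simple, $\partial_zF(i\omega_\ast,\tau_k)\neq0$. The implicit function theorem then gives a $C^1$ branch with
\[
\frac{dz}{d\tau}=-\frac{\partial_\tau F}{\partial_z F}
=\frac{Lze^{-z\tau}}{R'(z)-L\tau e^{-z\tau}}.
\]
On the root we have $Le^{-z\tau}=-R(z)$. Substituting this removes $L$ and the exponential:
\[
\frac{dz}{d\tau}=\frac{-zR(z)}{R'(z)+\tau R(z)}=\frac{-z}{R'(z)/R(z)+\tau}.
\]

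Next, evaluate at $z=i\omega_\ast$, $\tau=\tau_k$, using the log-derivative decomposition \eqref{eq:log-derivative}, namely $R'/R=a-ib$. This gives
\[
\left.\frac{dz}{d\tau}\right|_{i\omega_\ast}=\frac{-i\omega_\ast}{(a+\tau_k)-ib}
=\frac{-i\omega_\ast\bigl((a+\tau_k)+ib\bigr)}{(a+\tau_k)^2+b^2}.
\]
Its real part is $\omega_\ast b/\bigl((a+\tau_k)^2+b^2\bigr)$, which is exactly \eqref{eq:transversality-formula}.

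For positivity, note that each summand of $b(\omega_\ast)$ in \eqref{eq:a-b-definitions} is positive when $\omega_\ast>0$. The denominator is positive because $a>0$ and $\tau_k>0$, so $a+\tau_k>0$ and the denominator is at least $(a+\tau_k)^2>0$. The derivative is therefore strictly positive, and the crossing is transverse from left to right.

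The one point needing care is the substitution step. It uses $R(i\omega_\ast)\neq0$, which holds because every $r_j>0$, so that dividing by $R$ is legitimate. We also need the denominator $R'+\tau R$ to be nonzero, which is equivalent to simplicity of the root. Everything else is routine algebra.
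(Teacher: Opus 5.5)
Your proposal is correct and matches the paper's proof step for step: you differentiate $F(z,\tau)=R(z)+Le^{-z\tau}$ implicitly, use the root relation $Le^{-z\tau}=-R(z)$ to reduce to $dz/d\tau=-z/\bigl(R'(z)/R(z)+\tau\bigr)$, and then evaluate with $R'/R=a-ib$ at $z=i\omega_\ast$. One further observation: since $R'/R+\tau_k=a+\tau_k-ib$ has imaginary part $-b(\omega_\ast)\neq0$ and $R(i\omega_\ast)\neq0$, the simplicity hypothesis you use to keep the denominator nonzero holds automatically for any imaginary root with $\omega_\ast>0$.
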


\begin{proof}
See Appendix~\ref{app:sec4}.
\end{proof}

\paragraph{First crossing and Hopf bifurcation}
Only branches satisfying
\begin{equation}
c_\ast<L(\gamma)<H-c_\ast,
\qquad L(\gamma)=K|\gamma|,
\label{eq:admissible-delay-induced-branch}
\end{equation}
can contribute a genuinely delay-induced Hopf crossing. For such branches,
\(\omega_\ast(\gamma)\) is determined by \eqref{eq:omega-star-equation}, and
\(\tau_k(\gamma)\) by \eqref{eq:critical-delays-negative}. The first critical
hard delay in the strictly antagonistic class is
\begin{equation}
\tau_{\rm crit}
=
\min_{\substack{\gamma\in\sigma(\mathcal C_\rho)\cap(-\infty,0)\\
 c_\ast<L(\gamma)<H-c_\ast}}
\ \min_{\substack{k\in\mathbb Z\\ \tau_k(\gamma)>0}}
\tau_k(\gamma).
\label{eq:global-critical-delay-antagonistic}
\end{equation}
The corresponding frequency and spectral value are denoted by \(\omega_{\rm crit}\)
and \(\gamma_{\rm crit}\).

\begin{theorem}[delay-induced Hopf bifurcation in the strictly antagonistic class]
\label{thm:delay-induced-hopf}
Assume barycentric balance and uniform exploration. Suppose that the interaction
is strictly antagonistic on the tangent space,
\[
\mathcal B_\rho=-\chi\mathcal A_\rho^\top,
\qquad \chi>0.
\]
Assume further that:
\begin{enumerate}[label=\textup{(H\arabic*)},leftmargin=2.8em]
\item the barycentric equilibrium \((u,u,u,u)\) is asymptotically stable at zero
hard delay; equivalently, every negative spectral branch satisfies
\(L(\gamma)<H-c_\ast\);
\item the admissible set in \eqref{eq:global-critical-delay-antagonistic} is
nonempty, and the minimum is attained at a simple negative spectral value and a
single delay branch;
\item no other characteristic root of the full determinant lies on the imaginary
axis at \(\tau=\tau_{\rm crit}\).
\end{enumerate}
Then the barycentric equilibrium is asymptotically stable for
\[
0\leq \tau<\tau_{\rm crit}
\]
and undergoes a Hopf bifurcation at \(\tau=\tau_{\rm crit}\). The critical roots
are \(z=\pm i\omega_{\rm crit}\), and they cross from the left half-plane to the
right half-plane as \(\tau\) increases. A local branch of periodic solutions is
born at the crossing. Its criticality and stability are determined by the first
Lyapunov coefficient of the retarded functional differential equation.
\end{theorem}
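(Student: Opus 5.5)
The plan is the standard route for retarded functional differential equations. First show that the spectrum of the linearization \eqref{eq:revision-linear}--\eqref{eq:deployment-linear} is exactly the union of the zero sets of the scalar branches $F_\gamma(\cdot,\tau)$. Then track these roots continuously in $\tau$ starting from $\tau=0$. Finally, at $\tau_{\rm crit}$, check the hypotheses of the Hopf bifurcation theorem for RFDEs (Hale--Verduyn Lunel).

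\emph{Step 1: factorization of the full determinant.} Set $n=m-1$. The block determinant \eqref{eq:general-characteristic-block} is an entire function of $z$. For $z\neq-\kappa_X,-\kappa_Y$, take the Schur complement in $(\xi,\eta)$ as in \eqref{eq:general-characteristic-determinant}. With uniform exploration, eliminate $\beta$ as well, multiply through by $(z+\kappa_X)^n(z+\kappa_Y)^n$, and obtain
\[
\det\bigl[R(z)I-Ke^{-z\tau}\mathcal C_\rho\bigr]=\prod_{\gamma}F_\gamma(z,\tau),
\]
with the product taken over the eigenvalues of $\mathcal C_\rho$ counted with algebraic multiplicity. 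The product formula follows by triangularizing $\mathcal C_\rho$ (Schur form), so diagonalizability is not needed. Both sides are entire, so the identity extends to $z=-\kappa_X,-\kappa_Y$ by continuity. Hence there are no spurious or lost roots at the filter poles. In the antagonistic class every $\gamma$ is real and $\le0$. Zero branches give only the roots $-r_j<0$.

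\emph{Step 2: the interval $[0,\tau_{\rm crit})$.} At $\tau=0$, (H1) together with Lemma~\ref{lem:zero-delay-stability} (in the form of Remark~\ref{rem:filter-only-instability}) places all roots of each negative branch in the open left half-plane. For a retarded equation the roots depend continuously on $\tau$, and no root can enter from $\operatorname{Re}z=+\infty$, because $|R(z)|$ dominates $L|e^{-z\tau}|$ for large $|z|$ in $\operatorname{Re}z\ge0$. So the number of roots in the right half-plane can change only through roots on the imaginary axis. The point $z=0$ is never a root, since $R(0)+L=c_\ast+L>0$. Purely imaginary roots $\pm i\omega$ with $\omega>0$ occur only if $L>c_\ast$ by \eqref{eq:frequency-existence-condition}, and then only at the delays $\tau_k(\gamma)$. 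By (H1), every branch with $L>c_\ast$ is admissible in \eqref{eq:global-critical-delay-antagonistic}. Hence no crossing occurs before $\tau_{\rm crit}$, and all roots stay in the left half-plane. Asymptotic stability then follows from the spectral criterion for linear RFDEs together with linearized stability for the nonlinear system on $\Delta_m^4$.

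\emph{Step 3: simplicity, transversality and Hopf.} Simplicity of $i\omega_\ast$ as a root of $F_\gamma$ follows from
\[
\partial_zF_\gamma=R'(i\omega_\ast)-\tau L e^{-i\omega_\ast\tau}=R(i\omega_\ast)\bigl(a-ib+\tau\bigr).
\]
This uses $Le^{-i\omega_\ast\tau}=-R(i\omega_\ast)$, and the factor $a-ib+\tau$ is nonzero because $b>0$. Since $\gamma_{\rm crit}$ is simple and attained on a single delay branch (H2), and (H3) excludes other imaginary roots, $\pm i\omega_{\rm crit}$ is a simple pair for the full determinant. Lemma~\ref{lem:transversality-negative-branch} gives a positive crossing speed, so the pair moves from left to right. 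The Hopf theorem for RFDEs then yields the local periodic branch, with criticality governed by the first Lyapunov coefficient.

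The main obstacle is the root-counting bookkeeping in Step 2. This has two parts: the precise identification of the full spectrum with $\bigcup_\gamma F_\gamma^{-1}(0)$, including at the filter poles, and the exclusion of roots entering from infinity. I would handle these through the entire-function identity of Step 1 and a standard Rouché bound on a large half-disc.
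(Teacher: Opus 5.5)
Your proposal is correct and follows the same route as the paper's proof in Appendix~\ref{app:sec4}. In both, the spectrum reduces to real nonpositive scalar branches, and (H1) plus continuity of roots in $\tau$ means stability can be lost only at the crossings $\tau_k(\gamma)$ collected in the minimization defining $\tau_{\rm crit}$; Lemma~\ref{lem:transversality-negative-branch} and the RFDE Hopf theorem then finish. You also supply details the paper leaves implicit: the entire-function identity of the full block determinant with $\prod_\gamma F_\gamma$ (so the filter poles create no spurious or lost roots), the bound excluding roots entering from $\operatorname{Re}z=+\infty$, the fact that $z=0$ is never a root, and a proof that $i\omega_\ast$ is a simple zero of $F_\gamma$ rather than an assumption of it.
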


\begin{proof}
See Appendix~\ref{app:sec4}.
\end{proof}

\emph{Interpretation.}
Below the critical total delay, small perturbations return to a balanced mixed
state. At the crossing, the balanced standoff loses local stability and a
periodic branch is born. When the postcritical branch is attracting, the
dynamical reading is a back-and-forth regime: a successful revision arrives late
enough to become the next source of counter-adaptation, so advantage is gained,
answered, and lost rather than converging to a decisive fixed advantage. For the
baseline example this branch is supercritical---the cubic Hopf coefficient
computed in Appendix~\ref{app:baseline-lyapunov} has negative real part---but in general
the threshold marks the local onset of oscillatory coevolution rather than a
guarantee of global convergence to a unique cycle.

\begin{lemma}[crossing recurrence and persistence of the delay-induced branch]
\label{lem:crossing-persistence}
Let \(\gamma\) be a simple negative branch with \(L=K|\gamma|\in(c_\ast,H-c_\ast)\).
Then the modulus equation \eqref{eq:modulus-condition} has a unique positive root
\(\omega_\ast\), and the branch meets the imaginary axis exactly at the delays
\[
\tau_{\rm first}(\gamma)+\frac{2\pi k}{\omega_\ast},\qquad k=0,1,2,\dots,
\]
every crossing being from left to right. Consequently, if no other branch crosses
the imaginary axis on \([0,\tau_\Sigma]\), the barycentric equilibrium has exactly
one unstable conjugate pair for
\[
\tau_{\rm first}(\gamma)<\tau_\Sigma<\tau_{\rm first}(\gamma)+\frac{2\pi}{\omega_\ast},
\]
and this instability is not removed by increasing \(\tau_\Sigma\) within that interval.
\end{lemma}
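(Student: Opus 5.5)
The plan is to work on the single scalar branch \(R(z)+Le^{-z\tau}=0\) of \eqref{eq:negative-branch-characteristic} and combine three ingredients already established: the modulus condition, the phase condition, and the transversality sign of Lemma~\ref{lem:transversality-negative-branch}.

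\emph{Step 1: the frequency is unique and independent of \(\tau\).} Since \(L>c_\ast\), the strict monotonicity of \(|R(i\omega)|\) gives exactly one \(\omega_\ast>0\) solving \eqref{eq:omega-star-equation}. The root \(z=0\) is excluded because \(R(0)+L=c_\ast+L>0\). Conjugate symmetry handles \(\omega<0\). Hence the only possible imaginary roots are \(\pm i\omega_\ast\), and \(\omega_\ast\) does not depend on \(\tau\).

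\emph{Step 2: the crossing delays.} The phase condition \(\phi(\omega_\ast)-\omega_\ast\tau\equiv\pi \pmod{2\pi}\) (read with sign: \(e^{-i\omega\tau}=-R(i\omega)/L\)) gives exactly the arithmetic progression \eqref{eq:critical-delays-negative}. Restricting to \(\tau\ge0\) yields \(\tau_{\rm first}(\gamma)+2\pi k/\omega_\ast\), \(k\ge0\).

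One point needs checking: \(\tau_{\rm first}>0\) rather than \(0\), meaning \(i\omega_\ast\) is not a root at \(\tau=0\). This follows from the window condition \(L<H-c_\ast\). By Lemma~\ref{lem:zero-delay-stability} and Remark~\ref{rem:filter-only-instability}, the quartic at \(\tau=0\) is Hurwitz, so it has no imaginary roots. Consequently \(\phi(\omega_\ast)\neq\pi\) modulo \(2\pi\), and in fact \(\phi(\omega_\ast)<\pi\).

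\emph{Step 3: simplicity and direction at every crossing.} Each root \(i\omega_\ast\) at \(\tau_k\) is simple. Differentiating, \(F_z=R'(i\omega_\ast)-\tau L e^{-i\omega_\ast\tau}=R(i\omega_\ast)\bigl[R'/R+\tau\bigr]\) after substituting \(Le^{-z\tau}=-R\). The bracket has imaginary part \(-b(\omega_\ast)\neq0\) by \eqref{eq:a-b-definitions}, so \(F_z\neq0\). Lemma~\ref{lem:transversality-negative-branch} then applies at every \(\tau_k\), not only the first. The derivative formula \eqref{eq:transversality-formula} is strictly positive whatever \(\tau_k\) is, so every crossing is left to right. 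This uniformity of sign is the heart of the lemma, and I expect it to be the main point to present carefully. In particular, one must note that the formula depends on \(k\) only through \(\tau_k\) in the denominator.

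\emph{Step 4: counting roots.} At \(\tau=0\) the branch has no roots in \(\operatorname{Re}z\ge0\), by Step 2. Characteristic roots of a retarded equation depend continuously on \(\tau\), and for \(\tau>0\) no roots enter from infinity in the right half-plane; I would cite the standard Cooke--Grossman / Ruan--Wei argument for this. So the number of right-half-plane roots changes only at the \(\tau_k\), and by exactly \(+2\) each time.

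Thus on \((\tau_{\rm first},\tau_{\rm first}+2\pi/\omega_\ast)\) this branch contributes exactly one unstable conjugate pair. The other branches are stable at \(\tau=0\) by the zero-delay check, and by hypothesis they do not cross on \([0,\tau_\Sigma]\), so they contribute none. No crossing ever moves right-to-left, so the pair cannot return to the left half-plane as \(\tau_\Sigma\) increases. The conclusion then transfers from the scalar factor to the full determinant via the factorization \eqref{eq:uniform-determinant-factor}.

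The main obstacles are the continuity-of-roots bookkeeping, which is standard, and the nonvanishing of \(\tau_{\rm first}\), which uses the upper window bound.
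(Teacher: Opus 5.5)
Your argument is the paper's own: uniqueness of \(\omega_\ast\) from the monotonicity of \(|R(i\omega)|\), the arithmetic progression of crossing delays from the phase balance \(Le^{-i\omega_\ast\tau}=-R(i\omega_\ast)\), and positivity of \eqref{eq:transversality-formula} at every \(\tau_k\), so the unstable count can only increase; your extra checks (simplicity of \(i\omega_\ast\) via \(\operatorname{Im}[R'(i\omega_\ast)/R(i\omega_\ast)+\tau]=-b(\omega_\ast)\neq0\), \(\tau_{\rm first}>0\) from the zero-delay Hurwitz property, and the retarded root-counting) spell out what the paper's short proof leaves tacit. Two small corrections: the congruence should read \(\phi(\omega_\ast)+\omega_\ast\tau\equiv\pi\pmod{2\pi}\), not \(\phi(\omega_\ast)-\omega_\ast\tau\equiv\pi\); and stability of the other branches at \(\tau=0\) does not follow from the lemma's stated hypotheses (a branch with \(L'>H-c_\ast\) is unstable at \(\tau=0\) yet first meets the axis only at a positive delay) but from (H1) of Theorem~\ref{thm:delay-induced-hopf}, which the paper's proof likewise uses implicitly.
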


\begin{proof}
Since \(|R(i\omega)|^2=\prod_{j=1}^4(r_j^2+\omega^2)\) is strictly increasing in
\(\omega>0\), the equation \(|R(i\omega)|=L\) has at most one positive root, and in
the window it has exactly one, \(\omega_\ast\). An imaginary root \(z=i\omega\) of
\eqref{eq:negative-branch-characteristic} forces \(|R(i\omega)|=L\), hence
\(\omega=\omega_\ast\), and the balance \(Le^{-i\omega_\ast\tau}=-R(i\omega_\ast)\)
gives \(\omega_\ast\tau=\pi-\phi(\omega_\ast)+2\pi k\), i.e.\ the displayed delays.
At each of them the crossing speed is, by \eqref{eq:transversality-formula},
\(\operatorname{Re}(dz/d\tau)=\omega_\ast b(\omega_\ast)/[(a(\omega_\ast)+\tau)^2+b(\omega_\ast)^2]>0\),
so every crossing is from left to right; each adds one conjugate pair to the right
half-plane and none is removed, so the unstable count is constant between
consecutive crossing delays.
\end{proof}

\begin{corollary}[delay budget and intervention regimes]
\label{cor:delay-budget}
Assume barycentric balance, uniform exploration, and strict antagonism on the
tangent space. Let \(\gamma<0\) be a simple spectral branch and set
\(L=K|\gamma|\).
\begin{enumerate}[label=\textup{(\roman*)},leftmargin=2.6em]
\item If \(L\le c_\ast\), the branch has no positive-frequency imaginary crossing
and cannot set a delay-induced Hopf threshold.
\item If \(c_\ast<L<H-c_\ast\), the branch is asymptotically stable at zero hard
delay and has a finite critical hard-delay budget \(\tau_{\rm first}(\gamma)\); for a
system currently at \(\tau_\Sigma<\tau_{\rm first}(\gamma)\) the remaining margin is
\(\tau_{\rm first}(\gamma)-\tau_\Sigma\).
If, in addition, the branch is isolated and no other characteristic root lies on the
imaginary axis at the first crossing \(\tau_{\rm first}(\gamma)\), then the crossing
is transverse and, because the four hard lags enter the scalar branch only through
their sum \(\tau_\Sigma\), at that crossing
\[
\frac{\partial\operatorname{Re}z}{\partial\sigma_X}
=\frac{\partial\operatorname{Re}z}{\partial\sigma_Y}
=\frac{\partial\operatorname{Re}z}{\partial\theta_X}
=\frac{\partial\operatorname{Re}z}{\partial\theta_Y}
=\frac{\omega_\ast\,b(\omega_\ast)}
{\bigl(a(\omega_\ast)+\tau_{\rm first}(\gamma)\bigr)^2+b(\omega_\ast)^2}>0.
\]
Hence, to first order at the branch's first crossing, reducing any one hard-lag
component by \(\delta\) changes \(\operatorname{Re}z\) by the same amount,
\(-D_\gamma\,\delta+o(\delta)\), where \(D_\gamma\) is the positive derivative
displayed above, independently of which channel is shortened.
\item If \(L\ge H-c_\ast\), the branch is already at or beyond the
implementation-filter instability margin at zero hard delay, so it is not stabilized
by reducing hard observation or deployment lags alone. Stability must then be
restored by lowering the branch strength \(L\) or by changing the real filter and
mutation rates \(r_i\).
\end{enumerate}
\end{corollary}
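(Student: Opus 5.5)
The plan is to treat the three cases separately. Each reduces to results already established for the scalar branch $F_\gamma(z,\tau)=R(z)+Le^{-z\tau}$ in \eqref{eq:negative-branch-characteristic}, read together with the Routh--Hurwitz margin of Lemma~\ref{lem:zero-delay-stability} and Remark~\ref{rem:filter-only-instability}.

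For (i), I will use the modulus argument preceding \eqref{eq:frequency-existence-condition}. An imaginary root $z=i\omega$ with $\omega>0$ forces $L=|R(i\omega)|$. Since $|R(i\omega)|^2=\prod_j(r_j^2+\omega^2)$ is strictly increasing in $\omega$ with value $c_\ast^2$ at $\omega=0$, we get $|R(i\omega)|>c_\ast\ge L$ for every $\omega>0$. So no positive-frequency crossing exists for any $\tau$, and no $\tau_k(\gamma)$ is defined. Hence the branch cannot set a Hopf threshold.

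For (ii), the lower bound $L>c_\ast$ gives a unique $\omega_\ast$ by \eqref{eq:omega-star-equation}. The upper bound $L<H-c_\ast$ is exactly the zero-delay Hurwitz condition of Remark~\ref{rem:filter-only-instability}, and the condition $c_0=c_\ast+L>0$ holds automatically. Thus all roots of the branch lie in the open left half-plane at $\tau=0$. By Lemma~\ref{lem:crossing-persistence}, roots can reach the imaginary axis only at the delays $\tau_{\rm first}(\gamma)+2\pi k/\omega_\ast$. Two further points need checking. First, $z=0$ is never a root, since $R(0)+L=c_\ast+L>0$. Second, for a retarded equation the roots depend continuously on $\tau$, with no roots entering from infinity. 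Together these show the branch stays stable for $\tau_\Sigma<\tau_{\rm first}(\gamma)$, so the remaining margin is $\tau_{\rm first}(\gamma)-\tau_\Sigma$.

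For the derivative statement in (ii), isolation and simplicity let the implicit function theorem apply to $F_\gamma(z,\tau_\Sigma)=0$ near $(i\omega_\ast,\tau_{\rm first})$. Because $F_\gamma$ depends on $(\sigma_X,\sigma_Y,\theta_X,\theta_Y)$ only through the sum $\tau_\Sigma$, the chain rule gives $\partial z/\partial\sigma_X=\dots=\partial z/\partial\theta_Y=dz/d\tau$. Applying Lemma~\ref{lem:transversality-negative-branch} with $\tau_k=\tau_{\rm first}(\gamma)$ then yields the displayed positive value $D_\gamma$. A first-order Taylor expansion in any single lag gives the change $-D_\gamma\delta+o(\delta)$.

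For (iii), $L\ge H-c_\ast$ violates the strict inequality \eqref{eq:RH-main} at $\tau=0$. So the quartic has a root with $\operatorname{Re}z\ge0$, and by Lemma~\ref{lem:secant-margin} we have $L>c_\ast$, so $z=0$ is excluded. Reducing hard lags acts on this branch only by decreasing $\tau_\Sigma$ toward $0$. The smallest admissible budget, $\tau_\Sigma=0$, is already not asymptotically stable, so no hard-lag reduction can restore stability on this branch.

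The main obstacle is stating (iii) carefully for intermediate $\tau_\Sigma>0$. Without extra care, "not stabilized by reducing hard lags alone" only says that reaching $\tau_\Sigma=0$ fails. I would phrase the conclusion in exactly that form: the endpoint of every hard-lag reduction is unstable. Optionally, I would add that, since crossings are all left-to-right by Lemma~\ref{lem:transversality-negative-branch}, decreasing $\tau$ never moves roots from right to left, which rules out intermediate stabilization. The remaining claim, that stability can be restored only by changing $L$ or the $r_i$, follows because these are the only other quantities entering $F_\gamma$.
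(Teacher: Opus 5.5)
Your arguments for (i) and (ii) follow the paper's proof. Part (i) is the frequency-existence condition \eqref{eq:frequency-existence-condition}. Part (ii) combines the zero-delay Routh--Hurwitz margin with Lemma~\ref{lem:transversality-negative-branch} and the chain rule through $\tau_\Sigma$. Your continuity argument for stability on $[0,\tau_{\rm first}(\gamma))$ usefully makes explicit what the paper leaves implicit; note only that infinitely many roots do enter from infinity, but at $\operatorname{Re}z=-\infty$, and none can enter $\operatorname{Re}z\ge0$ that way. Your main line for (iii) is exactly the paper's one sentence: instability at $\tau_\Sigma=0$, plus the fact that the lags enter only through $\tau_\Sigma$.

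The step that fails is the patch you propose for the obstacle you rightly flag in (iii). Lemma~\ref{lem:transversality-negative-branch} says crossings are left-to-right as $\tau$ \emph{increases}. So decreasing $\tau$ through a crossing delay moves a pair from right to left, which is exactly how shortening the lags stabilizes in case (ii). Your claim that decreasing $\tau$ never moves roots from right to left is therefore false. Comparing with the current budget $\tau_\Sigma$ cannot work in any case; the argument must be anchored at $\tau=0$.

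Here is the corrected argument. For $L\ge H-c_\ast$, Lemma~\ref{lem:secant-margin} gives $L>c_\ast$; that is what the secant bound buys you, whereas $z=0$ is never a root simply because $R(0)+L=c_\ast+L>0$. Hence $\omega_\ast$ is unique, and Lemma~\ref{lem:transversality-negative-branch} applies at every imaginary root of the branch. The nonnegative crossing delays are the $\tau_k$ with $k\ge1$ if $L>H-c_\ast$ (there $\phi(\omega_\ast)>\pi$, so $\tau_0<0$). If $L=H-c_\ast$, they are the $\tau_k$ with $k\ge0$, where $\tau_0=0$.

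Since $|Le^{-z\tau}|\le L$ on $\operatorname{Re}z\ge0$, roots there stay bounded. So the number $N(\tau)$ of roots in the open right half-plane changes only at these simple crossings, and it is nondecreasing in $\tau$. When $L>H-c_\ast$, this gives $N(\tau')\ge N(0)=2$ for every $\tau'\ge0$ (the Routh column has two sign changes). When $L=H-c_\ast$, it gives $N(\tau')\ge2$ for every $\tau'>0$, because the pair $\pm i\omega_0$ leaves to the right immediately.

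This monotonicity is what rules out stabilization at any intermediate hard-lag budget. The paper's proof asserts this in one line without the justification.
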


\begin{proof}
All three statements concern a single negative branch \(\gamma\) through its strength
\(L=K|\gamma|\) and the scalar factor \eqref{eq:negative-branch-characteristic}, and
none of them presupposes zero-hard-delay stability of the full system. Part~(i) is the
frequency-existence condition \eqref{eq:frequency-existence-condition}: for
\(L\le c_\ast\) the modulus equation \eqref{eq:modulus-condition} has no positive root.
Part~(iii) is Remark~\ref{rem:filter-only-instability}: for \(L\ge H-c_\ast\) the branch
fails the zero-delay Routh--Hurwitz margin \eqref{eq:RH-main}, so it is already unstable
at \(\tau_\Sigma=0\), and since the hard lags enter only through \(e^{-z\tau_\Sigma}\),
reducing them cannot remove an instability present at zero hard delay. For part~(ii), in
the window \(c_\ast<L<H-c_\ast\) the branch is zero-delay stable and has a positive Hopf
frequency, so the budget is finite; under the stated isolation hypothesis
Lemma~\ref{lem:transversality-negative-branch} gives a transverse crossing, and because
\eqref{eq:negative-branch-characteristic} depends on
\((\sigma_X,\sigma_Y,\theta_X,\theta_Y)\) only through \(\tau_\Sigma\), one has
\(\partial\operatorname{Re}z/\partial\sigma_X=\cdots
=\partial\operatorname{Re}z/\partial\theta_Y=d\operatorname{Re}z/d\tau\), the common value
being the transversality derivative of Lemma~\ref{lem:transversality-negative-branch}
evaluated at the first crossing.
\end{proof}

\emph{Interpretation.}
Corollary~\ref{cor:delay-budget} is the operational delay-budget rule, the
coevolutionary counterpart of the preventive threshold
of~\cite{omelchenko2026voteshare}. In the delay-induced window the threshold does
not reveal which clock is slow: observation and deployment lags are interchangeable
at first order, so the most effective action is the cheapest reduction of the
\emph{total} hard lag. In the filter-induced regime the diagnosis is
different---shortening hard lags is not enough, and one must change the
implementation filter or the effective antagonistic gain.
Table~\ref{tab:intervention-regimes} summarizes the resulting triage. At the
baseline branch the common leverage in part (ii) is \(\approx0.080\), so each unit
of total hard lag removed moves the critical root left by about \(0.08\).

\begin{table}[t]
\centering
\caption{Operational reading of the branch-strength regimes in the strictly
antagonistic class. Here \(\tau_{\rm first}\) denotes the branch-level first
crossing; the global threshold \(\tau_{\rm crit}\) is the minimum over admissible
branches.}
\label{tab:intervention-regimes}
\small
\begin{tabularx}{\textwidth}{@{}ll>{\raggedright\arraybackslash}X@{}}
\toprule
Regime & Dynamical diagnosis & Relevant intervention \\
\midrule
\(L\le c_\ast\) &
no delay-induced branch &
monitor; locally harmless \\
\(c_\ast<L<H-c_\ast\), \(\tau_\Sigma<\tau_{\rm first}\) &
stable but delay-sensitive &
maintain the remaining delay margin \\
\(c_\ast<L<H-c_\ast\), \(\tau_\Sigma>\tau_{\rm first}\) &
stale-feedback instability &
reduce total hard lag \(\tau_\Sigma\) \\
\(L\ge H-c_\ast\) &
at or beyond filter-induced margin &
change filter rates or branch strength \\
\bottomrule
\end{tabularx}
\end{table}

\begin{remark}[two oscillatory mechanisms]
\label{rem:two-oscillatory-mechanisms}
Theorem~\ref{thm:delay-induced-hopf} assumes zero-hard-delay stability and
therefore describes a genuinely delay-induced Hopf bifurcation. If
\(L\leq c_\ast\), a negative branch is too weak to generate a positive-frequency
imaginary root. If \(c_\ast<L<H-c_\ast\), it is stable at zero hard delay and
destabilizes only when \(\tau\) crosses \(\tau_{\rm crit}\). If
\(L\geq H-c_\ast\), the implementation filters can already destabilize the
system at zero hard delay. These mechanisms connect continuously: along a fixed
negative branch, as \(L\uparrow H-c_\ast\) from inside the delay-induced window,
\(\omega_\ast\uparrow\omega_0\), where \(\omega_0^2=c_1/c_3\) is the
zero-hard-delay Hurwitz-boundary frequency, \(\phi(\omega_\ast)\uparrow\pi\), and
\[
\tau_0(L)=\frac{\pi-\phi(\omega_\ast(L))}{\omega_\ast(L)}\downarrow0.
\]
Thus the delay-induced Hopf threshold is squeezed continuously to zero as the
branch approaches the filter-induced instability boundary.
\end{remark}

\emph{Interpretation.}
The same observed oscillation can have different operational causes. If it is
delay-induced, reducing sensing or rollout lag can restore stability. If it is
filter-induced, the bottleneck is deeper: the fielding pipeline and the
strength of antagonistic coupling themselves generate the phase lag.

\paragraph{Outside the strictly antagonistic class}
For general payoff matrices, \(\mathcal C_\rho\) need not be symmetric and its
spectrum may contain complex values. If \(\gamma=|\gamma|e^{i\psi}\) and
\(L=K|\gamma|\), then the modulus condition remains
\begin{equation}
L=|R(i\omega)|,
\label{eq:complex-modulus-condition}
\end{equation}
but the phase condition becomes
\begin{equation}
\omega\tau=\psi-\phi(\omega)+2\pi k,
\qquad k\in\mathbb Z.
\label{eq:complex-phase-condition}
\end{equation}
A complex spectral value is accompanied by its conjugate in the real linearized
system. The corresponding critical block can be higher dimensional, so outside
the strictly antagonistic class zero-delay stability and first crossing should
be checked from the full determinant \eqref{eq:uniform-determinant-factor} or by
numerical continuation.

\paragraph{Fast-implementation limit and critical eigenvectors}
If \(\kappa_X,\kappa_Y\to\infty\) and \(\theta_X,\theta_Y\to0\), then after
dividing \eqref{eq:deployment-characteristic} by \(\kappa_X\kappa_Y\) one obtains
formally
\begin{equation}
(z+\mu_X)(z+\mu_Y)
=
\frac{\lambda_X\lambda_Y}{m^2}\gamma_\rho e^{-z(\sigma_X+\sigma_Y)}.
\label{eq:fast-implementation-limit-characteristic}
\end{equation}
Thus the usual two-population delayed replicator--mutator factor is recovered
only as a singular fast-implementation limit.

At a critical Hopf point, write
\[
\alpha(t)=ae^{i\omega_\ast t},\qquad
\beta(t)=be^{i\omega_\ast t},\qquad
\xi(t)=re^{i\omega_\ast t},\qquad
\eta(t)=se^{i\omega_\ast t}.
\]
The deployment layer gives
\begin{equation}
(i\omega_\ast+\kappa_X)r=\kappa_Xa e^{-i\omega_\ast\theta_X},
\qquad
(i\omega_\ast+\kappa_Y)s=\kappa_Yb e^{-i\omega_\ast\theta_Y}.
\label{eq:deployment-eigenvector-relations}
\end{equation}
Hence
\begin{equation}
\|r\|=\frac{\kappa_X}{\sqrt{\kappa_X^2+\omega_\ast^2}}\|a\|,
\qquad
\|s\|=\frac{\kappa_Y}{\sqrt{\kappa_Y^2+\omega_\ast^2}}\|b\|,
\label{eq:deployment-amplitude-attenuation}
\end{equation}
with phase shifts
\begin{equation}
\arg r-\arg a=-\omega_\ast\theta_X-\arctan\frac{\omega_\ast}{\kappa_X},
\qquad
\arg s-\arg b=-\omega_\ast\theta_Y-
\arctan\frac{\omega_\ast}{\kappa_Y}.
\label{eq:deployment-phase-shifts}
\end{equation}
The hard deployment delays contribute pure phase, while finite implementation
rates contribute both attenuation and additional phase lag. The next section
uses these relations to separate the role of asymmetry in thresholds,
amplitudes, phases, and observable second harmonics.

\section{Asymmetry, modal geometry, and observable signatures}
\label{sec:asym}

The Hopf theorem in Section~\ref{sec:lin} shows that, in the strictly
antagonistic class, the onset of oscillatory coevolution is controlled by a
scalar branch strength
\[
L(\gamma)=K|\gamma|,
\qquad
K=\frac{\lambda_X\lambda_Y\kappa_X\kappa_Y}{m^2},
\]
the pole structure
\[
(\mu_X,\mu_Y,\kappa_X,\kappa_Y),
\]
and the total hard delay
\[
\tau_\Sigma=\sigma_X+\sigma_Y+\theta_X+\theta_Y.
\]
The threshold does not distinguish how this total delay is distributed between
observation and deployment. Individual lags reappear in the critical
eigenvectors as phase shifts, but because the system is autonomous, absolute
phases are arbitrary. The invariant information is contained in relative
phases between observed harmonics and compositional modes, not in a standalone
quantity such as \(\arg(r^\top A_\rho s)\). Thus the robust observable message
is not a universal ``difference of lags'' rule. It is instead the following:
balanced scalar observables inherit a mean shift and a second harmonic, while
strict antagonism locks the two scalar performances in antiphase.

\paragraph{Branch strength and the critical delay}
For a fixed negative branch in the delay-induced window
\[
c_\ast<L<H-c_\ast,
\]
the Hopf frequency \(\omega_\ast\) is determined by
\begin{equation}
|R(i\omega_\ast)|=L,
\label{eq:section5-frequency-by-L}
\end{equation}
and the principal Hopf delay is
\begin{equation}
\tau_0(L)=\frac{\pi-\phi(\omega_\ast(L))}{\omega_\ast(L)},
\qquad
\phi(\omega)=\sum_{j=1}^4\arctan\frac{\omega}{r_j}.
\label{eq:principal-delay-as-function-of-L}
\end{equation}
Inside the window one has \(0<\phi(\omega_\ast)<\pi\), so \(\tau_0(L)>0\).

\begin{proposition}[monotonicity of the principal Hopf delay]
\label{prop:tau-decreases-with-L}
For fixed positive rates \(r_1,\ldots,r_4\), the principal critical delay
\(\tau_0(L)\) is strictly decreasing on the delay-induced window
\(c_\ast<L<H-c_\ast\). More precisely,
\begin{equation}
\frac{d\tau_0}{dL}
=
-\frac{\omega_\ast a(\omega_\ast)+\pi-\phi(\omega_\ast)}
{\omega_\ast^2 L b(\omega_\ast)}<0,
\label{eq:dtau-dL}
\end{equation}
where
\[
a(\omega)=\sum_{j=1}^4\frac{r_j}{r_j^2+\omega^2},
\qquad
b(\omega)=\sum_{j=1}^4\frac{\omega}{r_j^2+\omega^2}.
\]
\end{proposition}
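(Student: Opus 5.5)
The plan is a direct chain-rule computation: first differentiate $\omega_\ast$ with respect to $L$, then differentiate the quotient $\tau_0=(\pi-\phi(\omega_\ast))/\omega_\ast$. Everything is expressed through the functions $a$ and $b$ of \eqref{eq:a-b-definitions}.

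\emph{Step 1: $d\omega_\ast/dL$.} The frequency is defined implicitly by \eqref{eq:omega-star-equation}, i.e.\ $\sum_{j}\log(r_j^2+\omega_\ast^2)=2\log L$. Differentiating in $L$ gives
\[
\sum_j\frac{2\omega_\ast}{r_j^2+\omega_\ast^2}\,\frac{d\omega_\ast}{dL}=\frac{2}{L},
\qquad\text{so}\qquad
\frac{d\omega_\ast}{dL}=\frac{1}{L\,b(\omega_\ast)}>0.
\]
Here $b>0$ for $\omega_\ast>0$, and the implicit function theorem applies because $|R(i\omega)|$ is smooth and strictly increasing. The map $L\mapsto\omega_\ast(L)$ is therefore a $C^1$ increasing bijection on the window.

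\emph{Step 2: $\phi'(\omega)$.} From \eqref{eq:phase-function},
\[
\phi'(\omega)=\sum_j\frac{r_j}{r_j^2+\omega^2}=a(\omega).
\]

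\emph{Step 3: differentiate $\tau_0$ in $\omega$.} For $\tau_0=g(\omega_\ast)$ with $g(\omega)=(\pi-\phi(\omega))/\omega$,
\[
g'(\omega)=-\frac{\omega\,a(\omega)+\pi-\phi(\omega)}{\omega^2}.
\]
Multiplying by Step 1 gives exactly \eqref{eq:dtau-dL}.

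\emph{Sign.} In the numerator, $\omega_\ast a(\omega_\ast)>0$ trivially. The remaining point is $\pi-\phi(\omega_\ast)>0$ throughout the window, which is also the fact quoted just before the proposition. I would justify it as follows. Since $\phi$ is strictly increasing (because $a>0$) and $\omega_\ast$ increases with $L$, $\phi(\omega_\ast(L))$ is increasing in $L$. So it suffices to identify the point where $\phi=\pi$. If $\phi(\omega)=\pi$, then $R(i\omega)$ is a negative real number, so $z=i\omega$ solves $R(z)+|R(i\omega)|=0$ at $\tau=0$. That means it lies on the zero-delay Hurwitz boundary at gain $L=|R(i\omega)|$.

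For $L\in(c_\ast,H-c_\ast)$, Lemma~\ref{lem:zero-delay-stability} and Remark~\ref{rem:filter-only-instability} show the quartic $R(z)+L$ is Hurwitz. Hence no $\omega_\ast$ in the window can have $\phi(\omega_\ast)=\pi$ (nor $\phi=\pi+2\pi k$ for larger $k$). Near $L=c_\ast$ one has $\omega_\ast\to0$, so $\phi\to0<\pi$. By continuity and monotonicity, $\phi(\omega_\ast)<\pi$ on the whole window, which is consistent with Remark~\ref{rem:two-oscillatory-mechanisms}, where $\phi\uparrow\pi$ only as $L\uparrow H-c_\ast$. Thus the numerator is positive, the denominator $\omega_\ast^2Lb(\omega_\ast)$ is positive, and $d\tau_0/dL<0$.

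This sign argument, which links the phase bound to the Routh--Hurwitz window, is the only nonroutine step. The differentiation in Steps 1--3 is mechanical.
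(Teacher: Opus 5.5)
Your proposal is correct and matches the paper's proof: implicitly differentiating $\log|R(i\omega_\ast)|=\log L$ gives $d\omega_\ast/dL=1/(L\,b(\omega_\ast))$, and the quotient rule with $\phi'=a$ then yields \eqref{eq:dtau-dL}. The paper only asserts $0<\phi(\omega_\ast)<\pi$ on the window, whereas you prove it, and your argument is valid: on the window the quartic $R(z)+L$ is Hurwitz, so $\phi(\omega_\ast)=\pi$ cannot occur, and $\phi(\omega_\ast)\to0$ as $L\downarrow c_\ast$.
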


\begin{proof}
See Appendix~\ref{app:sec5}.
\end{proof}

\emph{Interpretation.}
Proposition~\ref{prop:tau-decreases-with-L} says that stronger antagonism
shrinks the amount of delay needed to destabilize the branch. At the lower edge
\(L\downarrow c_\ast\), the Hopf frequency tends to zero and the critical delay
diverges. At the upper edge \(L\uparrow H-c_\ast\), the critical delay tends to
zero, connecting continuously to the filter-induced instability of
Remark~\ref{rem:filter-only-instability}. Stronger opposition therefore makes
the adaptive contest more fragile to implementation lag.

If a balance-preserving parameter \(\rho\) deforms the projected matrices and a
simple negative branch \(\gamma_\rho\) remains selected, then
\[
L(\rho)=K|\gamma_\rho|.
\]
When \(K\) is independent of \(\rho\),
\begin{equation}
\frac{d\tau_0}{d\rho}
=
\frac{d\tau_0}{dL}
K\frac{d|\gamma_\rho|}{d\rho}.
\label{eq:dtau-drho}
\end{equation}
Thus the sign of a scale-asymmetry effect is not universal. It is determined
by how the balance-preserving deformation shifts the selected spectral branch.
The universal statement is only \eqref{eq:dtau-dL}: once the branch strength
increases, the principal Hopf delay decreases.

\paragraph{What the linear threshold does not see}
For a negative branch the scalar characteristic factor is
\[
R(z)+Le^{-z\tau_\Sigma}=0.
\]
Consequently, the linear Hopf threshold cannot distinguish between two
decompositions of the same total hard delay. Replacing
\[
(\sigma_X,\sigma_Y,\theta_X,\theta_Y)
\]
by another quadruple with the same sum leaves the scalar threshold unchanged.
The hard delays enter the eigenvector equations as unit-modulus phase factors;
they do not enter the modulus condition \eqref{eq:section5-frequency-by-L}.
Selection rates \(\lambda_X,\lambda_Y\) enter through their product in \(K\),
whereas implementation rates \(\kappa_X,\kappa_Y\) enter both through \(K\) and
through the real filter factors in \(R\). Thus implementation rates affect the threshold
not merely as amplitudes, but as fielding filters.

\emph{Interpretation.}
A threshold calculation alone cannot tell which clock is slow. The same
critical sum \(\tau_\Sigma\) may come from slow observation, slow deployment,
or both. To diagnose the source of lag, one must look beyond the threshold to
the modal phase geometry or to time series that resolve both composition and
scalar effectiveness.

\paragraph{Critical eigenvectors and phase allocation}
Let \(\gamma<0\) be the simple critical spectral value in the strictly
antagonistic class, and let \(a\in T\Delta_m\) be a real eigenvector of
\[
\mathcal C_\rho a=\gamma a.
\]
At the Hopf point \(z=i\omega_\ast\), the critical eigenmode can be written as
\[
\alpha(t)=a e^{i\omega_\ast t},
\qquad
\beta(t)=b e^{i\omega_\ast t},
\qquad
\xi(t)=r e^{i\omega_\ast t},
\qquad
\eta(t)=s e^{i\omega_\ast t},
\]
up to multiplication by a common nonzero complex scalar, which corresponds to a
shift of the time origin. The deployment variables are related to the revision
variables by
\begin{equation}
 r
 =
 T_X(\omega_\ast)e^{-i\omega_\ast\theta_X}a,
 \qquad
 T_X(\omega)=\frac{\kappa_X}{i\omega+\kappa_X},
\label{eq:r-from-a}
\end{equation}
\begin{equation}
 s
 =
 T_Y(\omega_\ast)e^{-i\omega_\ast\theta_Y}b,
 \qquad
 T_Y(\omega)=\frac{\kappa_Y}{i\omega+\kappa_Y},
\label{eq:s-from-b}
\end{equation}
and
\begin{equation}
 b
 =
 S_Y(\omega_\ast)e^{-i\omega_\ast(\sigma_Y+\theta_X)}\mathcal B_\rho a,
 \qquad
 S_Y(\omega)=
 \frac{\lambda_Y\kappa_X}{m(i\omega+\mu_Y)(i\omega+\kappa_X)}.
\label{eq:b-from-a}
\end{equation}
Hence
\begin{equation}
\|r\|
=
\frac{\kappa_X}{\sqrt{\kappa_X^2+\omega_\ast^2}}\|a\|,
\label{eq:r-amplitude}
\end{equation}
and
\begin{equation}
\|s\|
=
\frac{\lambda_Y\kappa_X\kappa_Y}
{m\sqrt{\mu_Y^2+\omega_\ast^2}\sqrt{\kappa_X^2+\omega_\ast^2}\sqrt{\kappa_Y^2+\omega_\ast^2}}
\,\|\mathcal B_\rho a\|.
\label{eq:s-amplitude}
\end{equation}
The associated phase shifts are
\begin{equation}
\arg r-
\arg a
=
-\omega_\ast\theta_X-\arctan\frac{\omega_\ast}{\kappa_X},
\label{eq:r-phase-shift}
\end{equation}
\begin{equation}
\arg s-
\arg b
=
-\omega_\ast\theta_Y-\arctan\frac{\omega_\ast}{\kappa_Y},
\label{eq:s-phase-shift}
\end{equation}
and
\begin{equation}
\arg b-
\arg(\mathcal B_\rho a)
=
-\omega_\ast(\sigma_Y+\theta_X)
-\arctan\frac{\omega_\ast}{\mu_Y}
-\arctan\frac{\omega_\ast}{\kappa_X}.
\label{eq:b-phase-shift}
\end{equation}
The individual delays therefore enter the geometry of the oscillation, not the
threshold itself. They shape which deployed portfolio lags its revision signal,
how strongly finite implementation attenuates it, and how scalar harmonics are
phased relative to a chosen compositional reference. They do not define a
normalization-independent invariant such as a universal difference of lags.

\paragraph{Balanced scalar observables}
Under barycentric balance, aggregate deployed performance has no linear
variation at the barycenter. In the notation above, Corollary~\ref{cor:performance-second-harmonic}
gives
\begin{equation}
\Phi_X(x(t),y(t))-\Phi_X(u,u)
=
\frac{\varepsilon^2}{2}\operatorname{Re}(r^\top A_\rho\overline{s})
+
\frac{\varepsilon^2}{2}\operatorname{Re}(r^\top A_\rho s\,e^{2i\omega_\ast t})
+
\mathcal O(\varepsilon^3),
\label{eq:section5-PhiX-second-harmonic}
\end{equation}
with the analogous expression
\begin{equation}
\Phi_Y(y(t),x(t))-\Phi_Y(u,u)
=
\frac{\varepsilon^2}{2}\operatorname{Re}(s^\top B_\rho\overline{r})
+
\frac{\varepsilon^2}{2}\operatorname{Re}(s^\top B_\rho r\,e^{2i\omega_\ast t})
+
\mathcal O(\varepsilon^3).
\label{eq:section5-PhiY-second-harmonic}
\end{equation}
Thus the compositional portfolios oscillate at frequency \(\omega_\ast\),
whereas balanced scalar performance observables display a mean shift and a
second harmonic at \(2\omega_\ast\). The phase of the second harmonic becomes
meaningful only relative to a compositional reference phase or by comparison
with another scalar observable.

\begin{corollary}[antiphase locking of balanced scalar performances]
\label{cor:antiphase-locking}
Assume barycentric balance and strict antagonism on the tangent space,
\[
\mathcal B_\rho=-\chi\mathcal A_\rho^\top,
\qquad \chi>0.
\]
For any deployed perturbations
\[
x=u+\xi,
\qquad
 y=u+\eta,
\qquad
\xi,\eta\in T\Delta_m,
\]
one has
\begin{equation}
\Phi_Y(u+\eta,u+\xi)-\Phi_Y(u,u)
=
-\chi\bigl[\Phi_X(u+\xi,u+\eta)-\Phi_X(u,u)\bigr].
\label{eq:antiphase-locking-exact}
\end{equation}
Consequently, the mean shifts and second harmonics of \(\Phi_X\) and \(\Phi_Y\)
have amplitude ratio \(\chi\) and are in phase opposition, independently of the
individual observation and deployment lags.
\end{corollary}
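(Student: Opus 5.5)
The plan is to reduce both deviations to bilinear forms in the tangent perturbations using the balance identities \eqref{eq:PhiX-bilinear}--\eqref{eq:PhiY-bilinear}, and then convert one into the other using the antagonism relation on the tangent space. By \eqref{eq:PhiY-bilinear}, with $\xi,\eta\in T\Delta_m$,
\[
\Phi_Y(u+\eta,u+\xi)-\Phi_Y(u,u)=\eta^\top B_\rho\xi .
\]
These identities follow from the two-sided balance conditions: the cross terms $u^\top B_\rho\xi=b_\rho\mathbf 1^\top\xi=0$ and $\eta^\top B_\rho u=b_\rho\mathbf 1^\top\eta=0$ vanish. They are therefore exact, with no remainder, and this is what makes \eqref{eq:antiphase-locking-exact} an identity rather than an asymptotic statement.

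Next I replace $B_\rho$ by its projection. Since $P\xi=\xi$ and $P\eta=\eta$ for tangent vectors, we have $\eta^\top B_\rho\xi=\eta^\top PB_\rho P\xi=\eta^\top\mathcal B_\rho\xi$. Applying the strict antagonism hypothesis \eqref{eq:antagonistic} then gives
\[
\eta^\top\mathcal B_\rho\xi=-\chi\,\eta^\top\mathcal A_\rho^\top\xi=-\chi\,\xi^\top\mathcal A_\rho\eta .
\]
The last equality is transposition of a scalar. One small point must be checked: $\mathcal A_\rho^\top$ here means the adjoint on $T\Delta_m$ with the induced Euclidean inner product, and this coincides with $PA_\rho^\top P$ restricted there because $P$ is symmetric. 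Undoing the projection, $\xi^\top\mathcal A_\rho\eta=\xi^\top A_\rho\eta$, which by \eqref{eq:PhiX-bilinear} equals $\Phi_X(u+\xi,u+\eta)-\Phi_X(u,u)$. This establishes \eqref{eq:antiphase-locking-exact}.

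For the consequence, I apply the identity pointwise in time to $\xi(t),\eta(t)$. The result is $\Phi_Y(t)-\Phi_Y(u,u)=-\chi[\Phi_X(t)-\Phi_X(u,u)]$ exactly, along any trajectory. Hence every Fourier component of the two deviations, in particular the mean shift and the $2\omega_\ast$ harmonic of Corollary~\ref{cor:performance-second-harmonic}, is related by the same factor $-\chi$. Concretely, the coefficients in \eqref{eq:section5-PhiX-second-harmonic}--\eqref{eq:section5-PhiY-second-harmonic} satisfy $s^\top B_\rho\bar r=-\chi\,r^\top A_\rho\bar s$ and $s^\top B_\rho r=-\chi\,r^\top A_\rho s$. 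This follows by the same computation extended bilinearly to complex tangent vectors, using transposition without conjugation. So the amplitude ratio is $\chi$ and the phases differ by $\pi$. The individual lags enter only through $\xi,\eta$ themselves, so they cannot affect this relation.

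I expect no real obstacle. The only step needing care is the adjoint/projection bookkeeping in the second paragraph, and correctly interpreting the hypothesis as an identity between operators on $T\Delta_m$, not between the full matrices. The latter is enough, because only tangent vectors are ever inserted.
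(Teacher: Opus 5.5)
Your argument is correct and is essentially the paper's proof: use the exact bilinear balance identities, pass to the projected operators on tangent vectors, apply strict antagonism, and transpose a scalar; the consequence then follows from the resulting pointwise-in-time identity. One small slip in your optional coefficient check: bilinear extension gives $s^\top B_\rho\overline{r}=-\chi\,\overline{r}^{\top}A_\rho s=-\chi\,\overline{r^\top A_\rho\overline{s}}$, a complex conjugate rather than $-\chi\,r^\top A_\rho\overline{s}$, but the real parts, and hence the mean shifts, are still equal and opposite, so the conclusion is unaffected.
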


\begin{proof}
By \eqref{eq:PhiX-bilinear}--\eqref{eq:PhiY-bilinear},
\[
\Phi_X(u+\xi,u+\eta)-\Phi_X(u,u)=\xi^\top\mathcal A_\rho\eta,
\qquad
\Phi_Y(u+\eta,u+\xi)-\Phi_Y(u,u)=\eta^\top\mathcal B_\rho\xi.
\]
Strict antagonism gives
\[
\eta^\top\mathcal B_\rho\xi
=-\chi\eta^\top\mathcal A_\rho^\top\xi
=-\chi\xi^\top\mathcal A_\rho\eta,
\]
which proves \eqref{eq:antiphase-locking-exact}.
\end{proof}

\emph{Interpretation.}
An observer who sees only aggregate effectiveness may misread the tempo of the
underlying adaptive race. In the balanced antagonistic core, headline
effectiveness carries a second-harmonic signature, while the hidden portfolios
cycle at the fundamental Hopf frequency. Strict antagonism also gives a clean
cross-population signature: the two scalar effectiveness signals move in
opposite phase with fixed amplitude ratio \(\chi\). This is a falsifiable
signature of the model class, not merely a qualitative statement that the
system oscillates.

A first harmonic can reappear in an aggregate effectiveness observable in two
natural ways. First, the left-balance conditions in
Assumption~\ref{ass:barycentric-balance} may fail, restoring a linear term in
\(\Phi_X\) or \(\Phi_Y\). Second, a scalar capacity variable can respond at
first order. If
\[
E_X(t)=N_X(t)\Phi_X(x(t),y(t)),
\qquad
N_X(t)=N_X^\ast+\varepsilon n_X(t)+\mathcal O(\varepsilon^2),
\]
then
\[
E_X(t)-E_X^\ast
=
\varepsilon n_X(t)\Phi_X(u,u)
+
N_X^\ast\bigl[\Phi_X(x(t),y(t))-\Phi_X(u,u)\bigr]
+
\mathcal O(\varepsilon^3).
\]
Thus a robust first harmonic in headline effectiveness points either to broken
left balance or to an additional first-order capacity channel.

The next section illustrates these analytical conclusions numerically, locating
the three branch-strength regimes, the implementation-rate window, and the
predicted observable signatures.

\section{Numerical illustrations and operational diagnostics}
\label{sec:num}

The calculations in this section connect the closed-form Hopf threshold of
Section~\ref{sec:lin} to reproducible time-domain behavior and to the observable
signatures of Section~\ref{sec:asym}. For the baseline branch we also compute
the cubic Hopf coefficient from the finite normal form
(Diagnostic~3 and Appendix~\ref{app:baseline-lyapunov}); this classifies the
local onset as supercritical. The simulations below illustrate the threshold,
postcritical saturation, amplitude scaling, and observable signatures. They are
not used as a global continuation proof for the nonlinear periodic branch.

We use one small strictly antagonistic example to address three questions that
arise in applications:
\[
\begin{gathered}
\text{How strong is the antagonistic branch?}\\
\text{How fast is fielding?}\\
\text{What would an observer see?}
\end{gathered}
\]
The examples are dimensionless mechanism checks. They should not be read as a
calibration to any particular empirical contest.

\paragraph{Baseline strictly antagonistic example}
Take \(m=3\), so \(T\Delta_3\) is two-dimensional. Let
\[
q_1=\frac{1}{\sqrt2}(1,-1,0)^\top,
\qquad
q_2=\frac{1}{\sqrt6}(1,1,-2)^\top,
\qquad
Q=(q_1,q_2),
\]
and define
\begin{equation}
A=Q
\begin{pmatrix}
1&0\\0&1/2
\end{pmatrix}
Q^\top
=
\frac1{12}
\begin{pmatrix}
7&-5&-2\\
-5&7&-2\\
-2&-2&4
\end{pmatrix},
\qquad
B=-A^\top=-A.
\label{eq:num-baseline-A}
\end{equation}
Then
\[
Au=A^\top u=Bu=B^\top u=0,
\]
so barycentric balance holds with \(a_\rho=b_\rho=0\), and strict antagonism
holds with \(\chi=1\). On the tangent space,
\[
\mathcal C=\mathcal A\mathcal B=-\mathcal A\mathcal A^\top,
\]
with eigenvalues
\begin{equation}
\gamma_1=-1,
\qquad
\gamma_2=-\frac14.
\label{eq:num-gamma-values}
\end{equation}
For the baseline rates
\begin{equation}
\mu_X=\mu_Y=1,
\qquad
\kappa_X=\kappa_Y=1,
\qquad
\lambda_X=\lambda_Y=\frac92,
\label{eq:num-baseline-rates}
\end{equation}
we have
\[
K=\frac{\lambda_X\lambda_Y\kappa_X\kappa_Y}{m^2}=\frac94.
\]
Thus the two branch strengths are
\[
L_1=K|\gamma_1|=\frac94,
\qquad
L_2=K|\gamma_2|=\frac9{16}.
\]
Since \(r_1=\cdots=r_4=1\),
\[
c_\ast=1,
\qquad
H=5,
\qquad
c_\ast<L<H-c_\ast
\quad\Longleftrightarrow\quad
1<L<4.
\]
Only the first branch is in the delay-induced Hopf window; the second branch is
too weak to generate a positive-frequency crossing. For the critical branch,
\[
|R(i\omega)|=(1+\omega^2)^2=L_1=\frac94,
\]
so
\[
\omega_\ast=\frac1{\sqrt2},
\]
and
\begin{equation}
\tau_{\rm crit}
=
\frac{\pi-4\arctan(1/\sqrt2)}{1/\sqrt2}
\approx0.9612.
\label{eq:num-baseline-taucrit}
\end{equation}
The linear Hopf prediction gives the compositional period
\[
T_{\rm comp}=\frac{2\pi}{\omega_\ast}\approx8.886,
\]
whereas a balanced scalar observable has leading period
\[
T_{\rm scalar}=\frac{\pi}{\omega_\ast}\approx4.443.
\]

\emph{Interpretation.}
The example selects one dangerous mode and one harmless mode from the same
local interaction geometry. The stronger branch is not unstable by itself; it
becomes unstable only when the total feedback delay exceeds about \(0.96\).
The weaker branch does not generate a positive-frequency linear Hopf crossing at
any hard delay.

\paragraph{Diagnostic 1: branch strength and the delay margin}
For the equal-pole case \(r_1=\cdots=r_4=1\), the principal critical delay is
an explicit function of branch strength:
\[
\omega(L)=\sqrt{\sqrt L-1},
\qquad
\tau_0(L)=\frac{\pi-4\arctan \omega(L)}{\omega(L)},
\qquad
1<L<4.
\]
Figure~\ref{fig:num-gain-window} displays this curve and marks the baseline
branch \(L=9/4\).

\begin{figure}[t]
\centering
\includegraphics[width=0.74\textwidth]{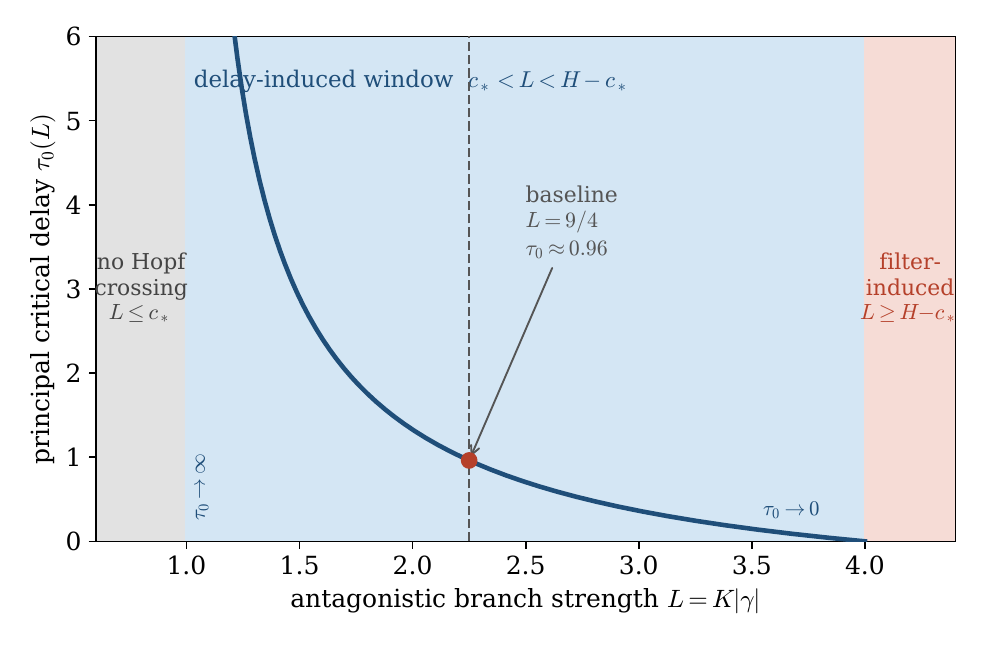}
\caption{Intervention map: principal critical delay as a function of antagonistic
branch strength for \(r_1=\cdots=r_4=1\). Only the middle (shaded) window
\(1<L<4\) is delay-sensitive; weak branches (\(L\le1\)) have no positive-frequency
crossing, and strong branches (\(L\ge4\)) are filter-unstable already at zero hard
delay, where shortening hard lags is not the remedy. The baseline branch
\(L=9/4\) lies inside the delay-induced window.}
\label{fig:num-gain-window}
\end{figure}

\emph{Interpretation.}
The plot separates three operational regimes. Weak branches are locally harmless in the linear Hopf sense:
\(L\leq c_\ast\) gives no positive-frequency crossing. Intermediate branches are
delay-sensitive: \(c_\ast<L<H-c_\ast\) gives a finite delay margin. Very strong
branches are filter-unstable: \(L\geq H-c_\ast\) can destabilize the system
even at zero hard delay. Thus reducing delay is the right intervention only in
the middle regime; outside it, the branch is either too weak to matter or too
strong for the implementation filters to stabilize.

\paragraph{Diagnostic 2: implementation speed as a fielding lever}
We next vary the implementation speed while holding the branch strength fixed.
Set
\[
\mu_X=\mu_Y=1,
\qquad
\kappa_X=\kappa_Y=\kappa,
\qquad
L=\frac94.
\]
Then
\[
|R(i\omega)|=(1+\omega^2)(\kappa^2+\omega^2),
\]
and the principal delay is computed from
\[
(1+\omega_\ast^2)(\kappa^2+\omega_\ast^2)=\frac94,
\]
\[
\tau_0(\kappa)=
\frac{\pi-2\arctan(\omega_\ast)-2\arctan(\omega_\ast/\kappa)}{\omega_\ast}.
\]
The delay-induced window exists only when
\[
c_\ast(\kappa)<\frac94<H(\kappa)-c_\ast(\kappa).
\]
For this example, this gives approximately
\[
0.7417<\kappa<1.5.
\]

\begin{figure}[t]
\centering
\includegraphics[width=0.74\textwidth]{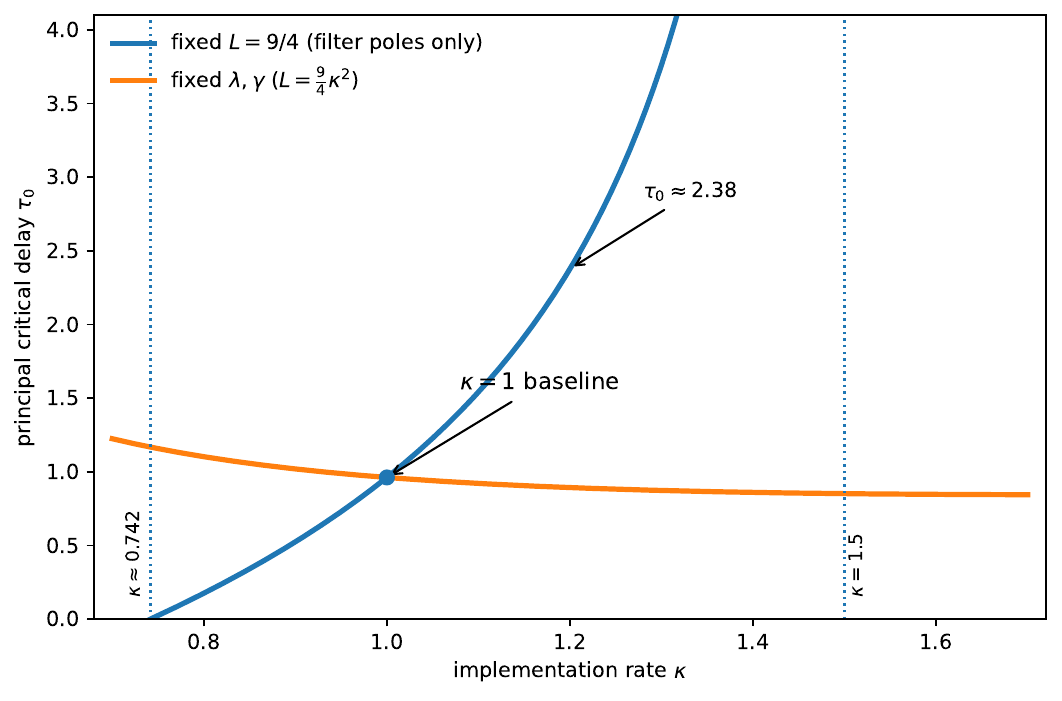}
\caption{Critical delay as a function of the implementation rate,
\(\mu_X=\mu_Y=1\), \(\kappa_X=\kappa_Y=\kappa\). Blue: the cleared branch strength
is held fixed at \(L=9/4\), isolating the movement of the filter factors
\(z+\kappa_X,z+\kappa_Y\); the delay-induced window is \(0.742<\kappa<1.5\). Red:
the primitive selection rates and the spectral branch are held fixed instead, so
the cleared branch strength becomes \(L=\tfrac94\kappa^2\) while the filter factors
move at the same time. In this convention the two effects nearly cancel in the
plotted range. The two conventions coincide at the baseline \(\kappa=1\).}
\label{fig:num-kappa-window}
\end{figure}

\emph{Interpretation.}
This sweep isolates the filter-pole effect by holding the effective branch strength
\(L=K|\gamma|\) fixed, so changing \(\kappa\) moves the real filter poles in \(R(z)\)
but not \(L\) itself. Under this convention, moving from \(\kappa=1\) to
\(\kappa=1.2\) raises the critical delay from about \(0.96\) to about \(2.38\): at
fixed loop gain, faster fielding enlarges the delay margin. The convention matters.
If instead the primitive selection rates \(\lambda_X,\lambda_Y\) and the spectral
branch \(\gamma\) are held fixed, then the cleared characteristic factor has
\(L=K|\gamma|=\tfrac94\kappa^2\), while the filter factors in \(R(z)\) move at the
same time; in the plotted range these two effects nearly cancel and the second curve
in Figure~\ref{fig:num-kappa-window} is almost flat, so merely fielding faster does
little to the margin once the simultaneous change in the cleared branch strength is
accounted for. The operative quantity is always the branch strength entering the
characteristic factor, not the implementation rate in isolation.

\paragraph{Diagnostic 3: local normal-form coefficient for the baseline branch}
The Hopf theorem of Section~\ref{sec:lin} gives existence and transversality but
not the criticality of the periodic branch. For the baseline branch we compute
the cubic Hopf coefficient directly from the finite normal form of the retarded
tangent system; the calculation uses the full tangent system, not only the
scalar characteristic factor, because the noncritical tangent mode enters the
centre-manifold correction at second order. The result, derived
in Appendix~\ref{app:baseline-lyapunov}, is
\[
c_H=-0.9193272749-0.1089995716\,i,
\qquad
\operatorname{Re}c_H<0,
\]
so the baseline Hopf bifurcation is supercritical and the small postcritical
periodic branch is locally orbitally attracting for \(\tau_\Sigma>\tau_{\rm crit}\).
The same coefficient predicts the deployed-amplitude law and nonlinear frequency
\[
\max\|x-u\|\sim0.26345\,\sqrt{\tau_\Sigma-\tau_{\rm crit}},
\qquad
\Omega(\delta)=\omega_\ast-0.1629106770\,\delta+O(\delta^2),
\]
with \(\beta=0.0797591244\) the transversality coefficient, \(\|q_\xi\|=1/\sqrt5\)
the deployed-\(x\) part of the critical eigenvector, and
\(\delta=\tau_\Sigma-\tau_{\rm crit}\); at \(\tau_\Sigma=1.10\) the frequency law
gives \(T_{\rm comp}\approx9.18\) and \(T_{\rm scalar}\approx4.59\). Both
predictions are tested by direct integration below. Along the whole leading
branch of the diagonal three-strategy family, Proposition~\ref{prop:m3-leading-supercritical}
shows that the equal-split Hopf coefficient has negative real part for every
\(L\in(1,4)\); Table~\ref{tab:lyapunov-family} gives representative
Euclidean-normalized values. In the exactly solvable two-strategy case the
cubic coefficient is negative in closed form
(Proposition~\ref{prop:m2-supercritical}).

\paragraph{Diagnostic 4: direct time-domain integration}
We integrate the full nonlinear delay system for the baseline parameters
\eqref{eq:num-baseline-A}--\eqref{eq:num-baseline-rates}. The total hard delay
is split equally across the four channels,
\[
\sigma_X=\sigma_Y=\theta_X=\theta_Y=\frac{\tau_\Sigma}{4}.
\]
This split is used only for simulation; the analytical threshold depends on
the sum \(\tau_\Sigma\). The initial histories are small tangent perturbations
of the barycenter, with all four portfolios in the interior of \(\Delta_3\).
Numerical details and a step-refinement check are reported in Appendix~\ref{app:numerics}.

Figure~\ref{fig:num-time-domain} compares two cases. For
\(\tau_\Sigma=0.80<\tau_{\rm crit}\), the deployed portfolio returns to the
barycenter. For \(\tau_\Sigma=1.10>\tau_{\rm crit}\), the perturbation grows
away from the barycenter and, in the displayed integration, settles into a
bounded oscillatory regime; the late-time window displays the corresponding
periodic motion. By
Lemma~\ref{lem:crossing-persistence}, the leading branch crosses the imaginary
axis only at \(\tau_{\rm crit}+2\pi k/\omega_\ast\), and the second branch
(\(L_2=9/16<c_\ast\)) never crosses; hence on
\((\tau_{\rm crit},\tau_{\rm crit}+2\pi/\omega_\ast)\approx(0.96,9.85)\), in
particular at \(\tau_\Sigma=1.10\), the barycenter carries exactly one unstable
conjugate pair. Thus the growth away from the barycenter is a genuine linear
instability rather than a transient of the chosen observation window; bounded
saturation of the nonlinear oscillation is the numerical behavior displayed by
the integration.

\begin{figure}[t]
\centering
\includegraphics[width=0.90\textwidth]{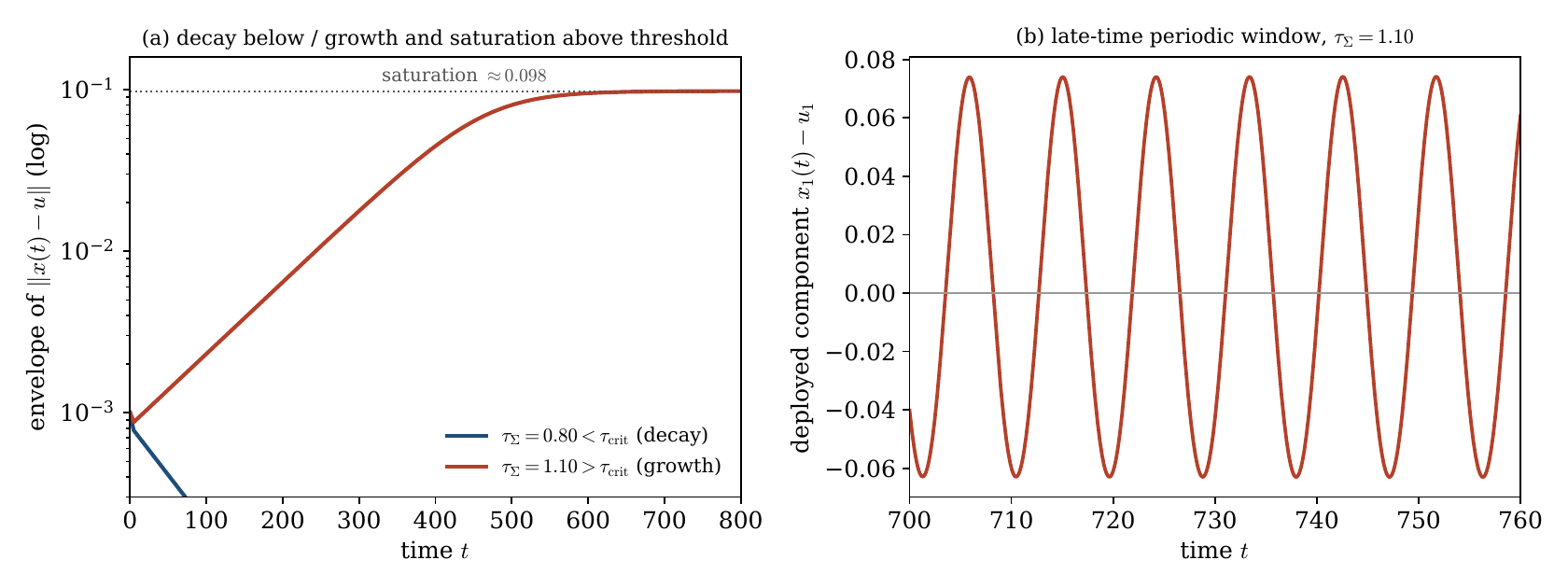}
\caption{Direct integration of the nonlinear delay system below and above the
critical total delay. Left: distance \(\|x(t)-u\|\) for
\(\tau_\Sigma=0.80\) and \(\tau_\Sigma=1.10\). Right: late-time window for
\(\tau_\Sigma=1.10\), showing the deployed component \(x_1(t)-u_1\).}
\label{fig:num-time-domain}
\end{figure}

\emph{Interpretation.}
The analytical threshold has the expected time-domain meaning. Below the
threshold, a small reshuffle is absorbed and the contest returns to a balanced
mixture. Above it, each correction arrives late enough to seed the next
counter-correction, and the displayed run enters a back-and-forth regime.
This is the numerical analogue of a stable adaptive standoff becoming a
self-sustained arms race among methods; global attraction of that finite-amplitude
cycle is not claimed.

\paragraph{Diagnostic 5: amplitude scaling at onset}
The normal-form calculation of Diagnostic~3 predicts the square-root law
\(\max\|x-u\|\sim0.26345\sqrt{\tau_\Sigma-\tau_{\rm crit}}\) for the equal-split
baseline branch. We test this prediction by integrating the nonlinear system for
a range of total delays just above threshold and recording the saturated peak of
\(\|x(t)-u\|\). Figure~\ref{fig:num-amplitude-scaling} plots the measured
amplitude against \(\sqrt{\tau_\Sigma-\tau_{\rm crit}}\). The direct-integration
fit is
\[
\max\|x-u\|\approx0.262\,\sqrt{\tau_\Sigma-\tau_{\rm crit}},
\qquad \text{least squares},
\]
in close agreement with the normal-form prediction. The numerical saturated
cycles are thus consistent with the analytically classified supercritical Hopf
onset of the baseline branch.

\begin{figure}[t]
\centering
\includegraphics[width=0.72\textwidth]{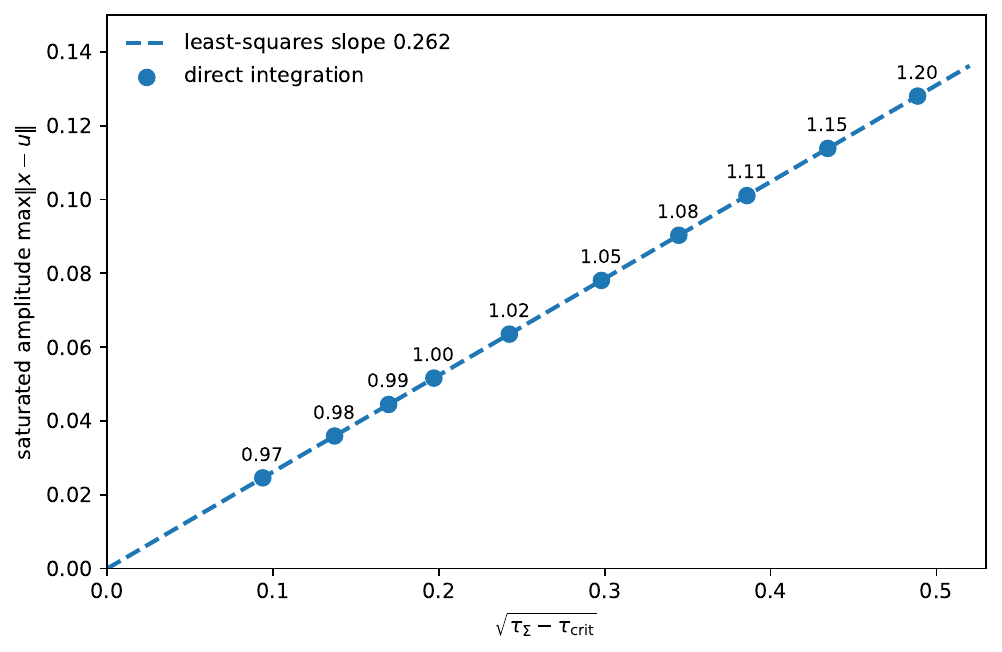}
\caption{Saturated oscillation amplitude versus
\(\sqrt{\tau_\Sigma-\tau_{\rm crit}}\) for total delays just above the threshold
\(\tau_{\rm crit}\approx0.9612\); labels give \(\tau_\Sigma\). The dashed line is
the direct-integration least-squares fit
\(\max\|x-u\|\approx0.262\sqrt{\tau_\Sigma-\tau_{\rm crit}}\), in close agreement
with the normal-form prediction
\(0.26345\sqrt{\tau_\Sigma-\tau_{\rm crit}}\). The negative cubic Hopf
coefficient computed in Appendix~\ref{app:baseline-lyapunov} classifies the baseline
onset as supercritical.}
\label{fig:num-amplitude-scaling}
\end{figure}

\paragraph{Diagnostic 6: hidden fundamental and scalar second harmonic}
Table~\ref{tab:num-periods} compares the linear Hopf periods with periods
measured from the late-time oscillatory run at \(\tau_\Sigma=1.10\). The
measured values are close to, but not identical with, the linear Hopf
predictions because the run is not infinitesimally close to the threshold.

\begin{table}[t]
\centering
\caption{Analytical and measured periods in the baseline oscillatory run.
Measured periods are taken from a late-time window of the direct integration at
\(\tau_\Sigma=1.10\) with step \(h=5\cdot10^{-3}\).}
\label{tab:num-periods}
\small
\begin{tabular}{@{}lll@{}}
\toprule
Quantity & Linear Hopf prediction & Measured (simulation) \\
\midrule
Critical delay \(\tau_{\rm crit}\) & \(0.9612\) & -- \\
Hopf frequency \(\omega_\ast\) & \(0.7071\) & -- \\
Compositional period \(2\pi/\omega_\ast\) & \(8.886\) & \(9.174\) \\
Scalar period \(\pi/\omega_\ast\) & \(4.443\) & \(4.587\) \\
\bottomrule
\end{tabular}
\end{table}

Figure~\ref{fig:num-observable} shows the observable signature after
transients. The first curve is a deployed compositional component,
\(x_1(t)-u_1\). The other two curves are the centered scalar performances
\(\Phi_X-\overline\Phi_X\) and \(\Phi_Y-\overline\Phi_Y\), rescaled only for
visual comparison.

\begin{figure}[t]
\centering
\includegraphics[width=0.80\textwidth]{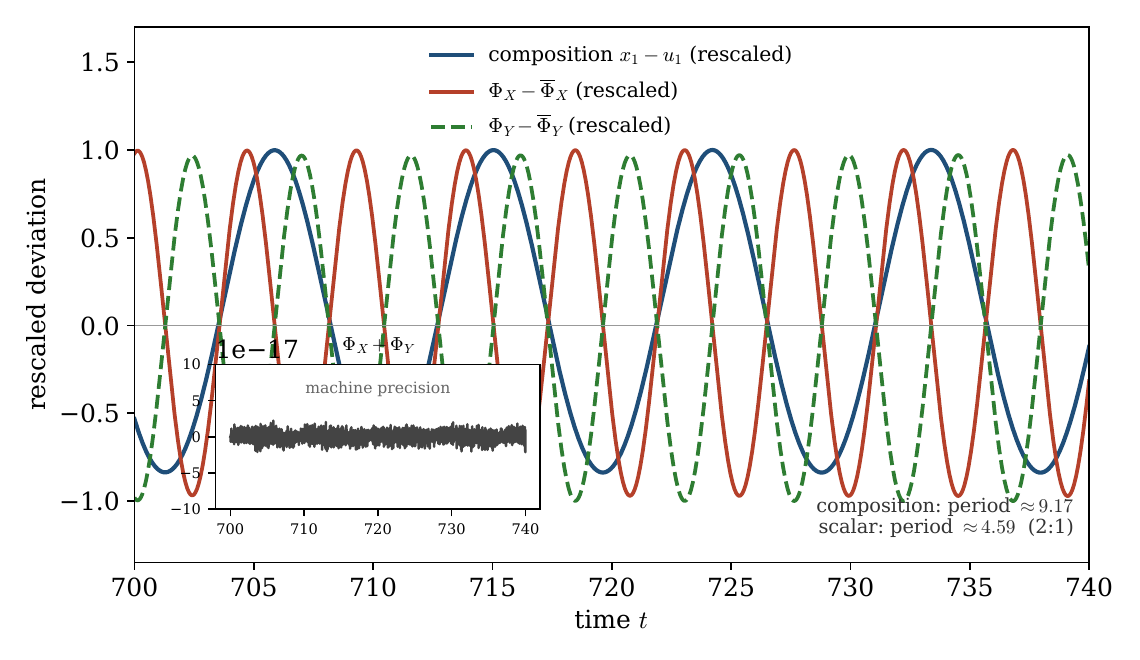}
\caption{Observable signature in the oscillatory regime. The deployed
composition oscillates at the fundamental Hopf frequency, whereas the balanced
scalar performances display a second harmonic. Since \(B=-A^\top\), the two
scalar performances are locked in antiphase. Curves are centered and scaled for
visual comparison.}
\label{fig:num-observable}
\end{figure}

The trace displays the signatures predicted by
Corollaries~\ref{cor:performance-second-harmonic} and
\ref{cor:antiphase-locking}. The portfolio component completes one oscillation
over the compositional period, while the scalar performance completes two.
Moreover, because \(B=-A^\top\),
\[
\Phi_Y(t)-\Phi_Y(u,u)=-(\Phi_X(t)-\Phi_X(u,u))
\]
in this example, so the scalar performance signals are in exact antiphase with
equal amplitude.

\emph{Interpretation.}
If only headline effectiveness is observed, the apparent period can be half
the period of the hidden portfolio race. If both composition and scalar
performance are measured, the frequency-doubling signature tests the balanced
core model. The antiphase relation is a sharper test of strict antagonism: it
predicts not merely that both sides oscillate, but that their scalar
performance deviations are locked with phase difference \(\pi\) and fixed
amplitude ratio \(\chi\).

\paragraph{Robustness of the antiphase law beyond \(\chi=1\)}
The baseline uses \(B=-A^\top\) (\(\chi=1\)), for which
Corollary~\ref{cor:antiphase-locking} predicts equal-amplitude antiphase. To check
that the antiphase amplitude-ratio law is not an artifact of this symmetric case, we
repeat the integration for \(\mathcal B_\rho=-\chi\mathcal A_\rho^\top\) with
\(\chi=2,3\), choosing \(\lambda_X=\lambda_Y\) so that the leading branch strength stays
at \(L=9/4\); the linear threshold \(\tau_{\rm crit}\approx0.9612\) is therefore
unchanged, and the run is again taken at \(\tau_\Sigma=1.10\).
Table~\ref{tab:antiphase-chi} reports the measured scalar phase shift and the amplitude
ratio of \(\Phi_X\) and \(\Phi_Y\). This is a numerical consistency check of the
implementation and scaling, not an independent empirical test of the algebraic identity:
Corollary~\ref{cor:antiphase-locking} predicts the ratio exactly under strict
antagonism. In every case the two scalar performances are in antiphase with amplitude
ratio \(\chi\), and the affine constraints hold to roundoff.

\begin{table}[t]
\centering
\caption{Antiphase amplitude-ratio check for strict antagonism
\(\mathcal B_\rho=-\chi\mathcal A_\rho^\top\). For each \(\chi\) the selection rate is
chosen to hold the leading branch strength at \(L=9/4\), so the delay-induced threshold
\(\tau_{\rm crit}\approx0.9612\) is unchanged; the system is integrated at
\(\tau_\Sigma=1.10\) and \(\Phi_X,\Phi_Y\) are measured on the late-time oscillatory
window.}
\label{tab:antiphase-chi}
\begin{tabular}{@{}cccc@{}}
\toprule
\(\chi\) & scalar phase shift & amplitude ratio (measured) & prediction \\
\midrule
\(1\) & \(\pi\) & \(1.0000\) & \(1\) \\
\(2\) & \(\pi\) & \(2.0000\) & \(2\) \\
\(3\) & \(\pi\) & \(3.0000\) & \(3\) \\
\bottomrule
\end{tabular}
\end{table}

\paragraph{Delay allocation at fixed budget}
Corollary~\ref{cor:delay-budget} predicts that the Hopf threshold depends on the hard
lags only through their sum, while the split of that sum among the four channels enters
the critical eigenvectors rather than the threshold (Section~\ref{sec:asym}). There is,
moreover, an exact reason for the reduced revision dynamics to coincide across splits.
By the shifted-exponential representation~\eqref{eq:shifted-exponential-kernel}, the
opponent's deployed state seen by the \(X\)-revision, \(y(t-\sigma_X)\), depends on \(q\)
only through the hard shift \(\sigma_X+\theta_Y\), and \(x(t-\sigma_Y)\) depends on \(p\)
through \(\sigma_Y+\theta_X\). The closed revision dynamics therefore depend on the four
hard lags only through the two cross-delays
\[
\delta_X=\sigma_X+\theta_Y,
\qquad
\delta_Y=\sigma_Y+\theta_X,
\qquad
\delta_X+\delta_Y=\tau_\Sigma,
\]
together with the filter rates. We test this by fixing \(\tau_\Sigma=1.10\) and
integrating three allocations: an equal split, an observation-heavy split
\((\sigma_X,\sigma_Y;\theta_X,\theta_Y)=(0.5,0.5;0.05,0.05)\), and a deployment-heavy
split \((0.05,0.05;0.5,0.5)\). In all three, \(\delta_X=\delta_Y=0.55\). Thus, apart from the prescribed initial
history, the reduced revision equations have the same hard-shift structure in all
three cases. For compatible histories the reduced revision trajectories are
identical, not merely threshold-equivalent. In the simulations below the same
late-time attracting regime is reached after transients, and the measured
fundamental period is unchanged to the reported precision. The
deployed-versus-intended phase, by contrast, is rephased by each side's own deployment
filter: for a periodic revision signal \(p(t)=\sum_{n}p_ne^{in\Omega t}\) the deployment
equation gives \(x_n=\kappa_Xe^{-in\Omega\theta_X}p_n/(\kappa_X+in\Omega)\), so the
fundamental deployed-versus-intended phase is exactly
\(-\Omega\theta_X-\arctan(\Omega/\kappa_X)\), which the measurements reproduce to a
fraction of a degree. Shifting the budget toward observation advances the deployed
mixture relative to intent; shifting it toward deployment retards it, with the threshold
unchanged.

\begin{table}[t]
\centering
\caption{Delay allocation at a fixed budget \(\tau_\Sigma=1.10\), baseline strictly
antagonistic example. The two cross-delays \(\delta_X=\delta_Y=0.55\) are equal in all
three splits, so the reduced revision equations have the same hard-shift structure;
for compatible histories the reduced revision trajectories coincide. The measured
late-time fundamental period is unchanged to the reported precision. The phase of
the deployed component \(x_1\) relative to the intended component \(p_1\) changes with
the split, matching the implementation-filter prediction
\(-\Omega\theta_X-\arctan(\Omega/\kappa_X)\) at the fundamental frequency \(\Omega\).}
\label{tab:delay-split}
\begin{tabular}{@{}lccc@{}}
\toprule
split & per-side \((\sigma_i,\theta_i)\) & period & \(x_1\) vs.\ \(p_1\) phase (meas./pred.) \\
\midrule
equal             & \((0.275,0.275)\) & \(9.17\) & \(-45.2^\circ\,/\,-45.2^\circ\) \\
observation-heavy & \((0.5,0.05)\)    & \(9.17\) & \(-36.4^\circ\,/\,-36.4^\circ\) \\
deployment-heavy  & \((0.05,0.5)\)    & \(9.17\) & \(-54.0^\circ\,/\,-54.0^\circ\) \\
\bottomrule
\end{tabular}
\end{table}

\paragraph{Summary}
The calculations are intentionally small and transparent. They support the
diagnostic reading of the analysis: branch strength selects the stability
regime, implementation speed is an independent stability lever, and balanced
scalar outputs can report a second harmonic rather than the hidden compositional
period. For the baseline branch the finite normal-form calculation gives a
negative cubic Hopf coefficient, so the onset is supercritical and the small
postcritical cycle is locally attracting; the amplitude slope predicted from
this coefficient agrees with the direct-integration fit. For general parameter
families, however, the theorem of Section~\ref{sec:lin} asserts only existence
and transversality, and criticality must be checked branch by branch. Global
continuation of nonlinear periodic branches is a separate problem; no claim in
this paper relies on the global behaviour of the nonlinear branch.

\FloatBarrier

\FloatBarrier
\section{Discussion}
\label{sec:disc}

This paper isolates a mechanism that is missed by a single delayed replicator
equation. In the revision--deployment model, deciding and fielding are separate
state layers. Observation and hard deployment lags enter the first Hopf
threshold through the total hard delay \(\tau_\Sigma\), while implementation
rates enter through real filter factors in the loop-transfer representation.
Thus stability depends not only on how fast a side selects a new portfolio, but
also on how that portfolio becomes deployed capability.

The main practical implication is delay-budget triage. In the delay-induced
window, the balanced state is locally stable below the critical total delay and
loses stability when the feedback loop becomes too stale. At the branch's first
crossing, all hard-lag components have the same first-order stabilizing leverage:
shortening observation, validation, or rollout by the same amount moves the
critical root left by the same amount. In the filter-induced regime the diagnosis
is different. The instability is at or beyond the zero-hard-delay
implementation-filter margin, so improving observation or rollout time alone is
not the relevant remedy; the implementation filter, damping, or effective
antagonistic branch strength must change.

For the baseline example, the finite Hopf normal-form calculation gives a
negative cubic coefficient. Hence the local onset is supercritical and the small
postcritical periodic branch is locally orbitally attracting. The predicted
amplitude slope agrees with direct integration. This classification is local and
branch-specific. The general theorem gives existence and transversality; global
attraction of the finite-amplitude cycle displayed at \(\tau_\Sigma=1.10\) is
supported by simulation rather than by a global continuation proof.

The second practical implication concerns observability. In the balanced core,
portfolio composition oscillates at the Hopf frequency, whereas scalar
effectiveness has no linear term and therefore shows a mean shift plus a leading
second harmonic. A measured effectiveness cycle of period \(T\) can therefore
correspond to a hidden compositional race of period \(2T\). Under strict
antagonism the two scalar effectiveness deviations also satisfy
\[
\Phi_Y-\Phi_Y^\ast=-\chi(\Phi_X-\Phi_X^\ast),
\]
so their amplitudes have ratio \(\chi\) and their phases are opposed. These are
diagnostic signatures of the model class, not policy prescriptions.

The cybersecurity reading is direct: revision is the intended mix of detection
rules, patches, configurations, or offensive techniques, while deployment is
fielded coverage. Observation delay is the time needed to infer the opponent's
current mix; deployment delay and implementation rate describe how long a
validated decision takes to reach the field. The same abstraction applies to
rapid uncrewed-system and countermeasure adaptation without making any
theatre-specific claim. The model does not identify which method to choose; it
identifies which clocks matter.

The assumptions are deliberately restrictive. The mode set is fixed, so the
model describes reallocations among already defined methods rather than the
birth of new modes. Barycentric balance keeps the equilibrium at the simplex
barycenter and makes scalar observables second order; off-center or biased
contests can restore first-order scalar signals. The main Hopf theorem is stated
for strict antagonism and uniform exploration. Outside that class, complex
spectral branches and structured mutation require the full determinant rather
than the scalar branch factor. The model is also deterministic and local; it
does not include stochastic shocks, spatial logistics, finite inventories,
network constraints, or optimal control.

Finally, the model is a relative-composition model, not a victory or cost model.
It has no terminal state, no absolute-loss accounting, and no claim about who
wins a particular contest. Its contribution is mechanism-level: implementation
lag can change which state is stable, when oscillation begins, and what an
observer sees. This is the coevolutionary analogue of the author's earlier
implementation-lag threshold mechanism, where restoration was preventive rather
than curative. Here the corresponding message is that oscillation management is
delay-budget triage.

In summary, the model gives four testable diagnostics: the onset of oscillation
is governed by total hard feedback lag in the delay-induced regime; fielding
speed is an independent stability lever through implementation filters; balanced
scalar effectiveness can report a second harmonic rather than the hidden
compositional period; and strict antagonism locks the two scalar effectiveness
signals in antiphase with fixed amplitude ratio.

\appendix
\renewcommand{\thesection}{\Alph{section}}
\renewcommand{\thetheorem}{\thesection.\arabic{theorem}}
\section{Details for Section~\ref{sec:model}}
\label{app:sec2}

\paragraph{Proof of Proposition~\ref{prop:invariance}}
We give the argument for $p$ and $x$; the other variables are identical. Let $s_p(t)=\mathbf 1^\top p(t)$. Since $\mathbf 1^\top M_X=\mathbf 1^\top$, summing \eqref{eq:p-dynamics} gives
\[
\dot s_p(t)=\lambda_Xp(t)^\top f_X^\sigma(t)[1-s_p(t)].
\]
Thus $s_p(0)=1$ implies $s_p(t)=1$. If $p_k(t)=0$, the replicator term in the $k$-th component vanishes and $\dot p_k=\mu_X(M_Xp)_k\ge0$. Hence the nonnegative orthant is invariant; strict positivity follows for strictly positive $M_X$, and in the uniform case $(M_Xp)_k=1/m$.

For deployment, summing \eqref{eq:x-deployment} gives $\frac{d}{dt}\mathbf 1^\top x=\kappa_X(1-\mathbf 1^\top x)$. Nonnegativity follows from the variation-of-constants formula \eqref{eq:x-integral-representation}: it expresses $x(t)$ as a convex combination of $x(0)$ and delayed values of $p$, all in $\Delta_m$. Therefore $x(t)\in\Delta_m$.

\paragraph{Derivation of the characteristic factors}
Substituting $\alpha=ae^{zt}$, $\beta=be^{zt}$, $\xi=re^{zt}$, $\eta=se^{zt}$ into \eqref{eq:revision-linear}--\eqref{eq:deployment-linear} gives
\[
(zI-\mathcal L_X)a=\frac{\lambda_X}{m}\mathcal A_\rho s e^{-z\sigma_X},
\quad
(zI-\mathcal L_Y)b=\frac{\lambda_Y}{m}\mathcal B_\rho r e^{-z\sigma_Y},
\]
\[
(z+\kappa_X)r=\kappa_Xae^{-z\theta_X},
\quad
(z+\kappa_Y)s=\kappa_Ybe^{-z\theta_Y}.
\]
Eliminating $r,s$ gives \eqref{eq:general-characteristic-determinant}. In the uniform case, $\mathcal L_X=-\mu_XI$, $\mathcal L_Y=-\mu_YI$. Eliminating $b$ gives
\[
(z+\mu_X)(z+\mu_Y)(z+\kappa_X)(z+\kappa_Y)a
=Ke^{-z\tau_\Sigma}\mathcal A_\rho\mathcal B_\rho a,
\]
which is \eqref{eq:uniform-determinant-factor} and \eqref{eq:deployment-characteristic}.

\paragraph{Proof of Lemma~\ref{lem:zero-delay-stability}}
At zero delay the scalar branch is
\[
z^4+c_3z^3+c_2z^2+c_1z+c_0=0,
\qquad c_0=c_\ast-\Gamma_\rho.
\]
The Routh--Hurwitz conditions for a quartic are $c_3,c_2,c_1,c_0>0$, $c_3c_2-c_1>0$, and $(c_3c_2-c_1)c_1>c_3^2c_0$. For positive $r_i$, the first three inequalities and $c_3c_2-c_1>0$ are automatic. The two remaining conditions are exactly \eqref{eq:RH-c0}--\eqref{eq:RH-main}.

\paragraph{Proof of Corollary~\ref{cor:performance-second-harmonic}}
By \eqref{eq:PhiX-bilinear}, the leading variation is $\xi^\top A_\rho\eta$. With
\[
\xi=\varepsilon\operatorname{Re}(ve^{i\omega_\ast t})+\mathcal O(\varepsilon^2),
\quad
\eta=\varepsilon\operatorname{Re}(we^{i\omega_\ast t})+\mathcal O(\varepsilon^2),
\]
a direct multiplication gives
\[
\xi^\top A_\rho\eta
=\frac{\varepsilon^2}{2}\operatorname{Re}(v^\top A_\rho\overline w)
+\frac{\varepsilon^2}{2}\operatorname{Re}(v^\top A_\rho w e^{2i\omega_\ast t})
+\mathcal O(\varepsilon^3).
\]
The formula for $Y$ is identical.

\section{Details for Section~\ref{sec:cov}}
\label{app:sec3}

\paragraph{Proof of Proposition~\ref{prop:delayed-covariance}}
We prove the identity for $X$; the proof for $Y$ is identical. Set
\[
f_X(t)=A_\rho y(t),\qquad f_X^\sigma(t)=A_\rho y(t-\sigma_X),
\]
so that $\overline F_X(t)=p(t)^\top f_X(t)$. Differentiating gives
\[
\frac{d}{dt}\overline F_X(t)=\dot p(t)^\top f_X(t)+p(t)^\top A_\rho\dot y(t).
\]
The selection part of $\dot p$ contributes
\[
\begin{aligned}
&\lambda_X\left[p\odot\left(f_X^\sigma-(p^\top f_X^\sigma)\mathbf 1\right)\right]^\top f_X \\
&\quad=\lambda_X\left[p^\top(f_X^\sigma\odot f_X)-(p^\top f_X^\sigma)(p^\top f_X)\right]
=\lambda_X\operatorname{Cov}_{p(t)}(f_X(t),f_X^\sigma(t)).
\end{aligned}
\]
The mutation part gives $\mu_X(M_Xp-p)^\top f_X$, and the deployment equation gives
\[
p^\top A_\rho\dot y=\kappa_Yp^\top A_\rho[q(t-\theta_Y)-y(t)].
\]
Combining these terms yields \eqref{eq:delayed-covariance-X}.

\paragraph{Proof of Corollary~\ref{cor:leading-covariance}}
Set
\[
g(t)=\mathcal A_\rho\eta(t),\qquad h(t)=\mathcal A_\rho\eta(t-\sigma_X).
\]
By barycentric balance, $g,h\in T\Delta_m$ and $u^\top g=u^\top h=0$. Since constants do not affect covariance,
\[
\operatorname{Cov}_{u+\alpha}(a_\rho\mathbf 1+g,a_\rho\mathbf 1+h)
= (u+\alpha)^\top(g\odot h)-[(u+\alpha)^\top g][(u+\alpha)^\top h].
\]
Now $(u+\alpha)^\top(g\odot h)=m^{-1}g^\top h+\mathcal O(\|(\alpha,\eta)\|^3)$ and $[(u+\alpha)^\top g][(u+\alpha)^\top h]=\mathcal O(\|(\alpha,\eta)\|^4)$. This proves \eqref{eq:leading-covariance-X}. The $Y$ identity is analogous.

\paragraph{Hopf-mode covariance expansion}
If
\[
\eta(t)=\varepsilon\operatorname{Re}(w e^{i\omega_\ast t})+\mathcal O(\varepsilon^2),
\]
then \eqref{eq:leading-covariance-X} gives
\begin{equation}
\begin{aligned}
&\operatorname{Cov}_{p(t)}\left(A_\rho y(t),A_\rho y(t-\sigma_X)\right) \\
&\quad =
\frac{\varepsilon^2}{2m}\operatorname{Re}
\left[(\mathcal A_\rho w)^\top\overline{\mathcal A_\rho w}\,e^{i\omega_\ast\sigma_X}\right]
+
\frac{\varepsilon^2}{2m}\operatorname{Re}
\left[(\mathcal A_\rho w)^\top(\mathcal A_\rho w)e^{2i\omega_\ast t-i\omega_\ast\sigma_X}\right]
+\mathcal O(\varepsilon^3).
\end{aligned}
\label{eq:covariance-hopf-mode-X}
\end{equation}
Since $(\mathcal A_\rho w)^\top\overline{\mathcal A_\rho w}=\|\mathcal A_\rho w\|^2\ge0$, the first term is a delay-dependent mean contribution proportional to $\|\mathcal A_\rho w\|^2\cos(\omega_\ast\sigma_X)$, while the second term is a second harmonic.

\paragraph{Deployment performance identity}
For $\Phi_X(t)=x(t)^\top A_\rho y(t)$, the deployment equations give
\begin{equation}
\begin{aligned}
\dot\Phi_X(t)
&=\dot x(t)^\top A_\rho y(t)+x(t)^\top A_\rho\dot y(t) \\
&=\kappa_X\left[p(t-\theta_X)^\top A_\rho y(t)-\Phi_X(t)\right]
+\kappa_Y\left[x(t)^\top A_\rho q(t-\theta_Y)-\Phi_X(t)\right].
\end{aligned}
\label{eq:deployed-performance-identity-X}
\end{equation}
Similarly,
\begin{equation}
\dot\Phi_Y(t)
=\kappa_Y\left[q(t-\theta_Y)^\top B_\rho x(t)-\Phi_Y(t)\right]
+\kappa_X\left[y(t)^\top B_\rho p(t-\theta_X)-\Phi_Y(t)\right].
\label{eq:deployed-performance-identity-Y}
\end{equation}

\section{Details for Section~\ref{sec:lin}}
\label{app:sec4}

\paragraph{Proof of Lemma~\ref{lem:secant-margin}}
Only the secant bound needs proof, since the Routh--Hurwitz part of the zero-delay margin was given in Appendix~\ref{app:sec2}. For the negative branch at zero hard delay,
\[
R(z)+L=0,
\]
the Routh--Hurwitz boundary is \(L=H-c_\ast\). At this boundary there is a simple imaginary pair. Indeed, for
\[
z^4+c_3z^3+c_2z^2+c_1z+c_0
\]
the imaginary part at \(z=i\omega\) is \(\omega(c_1-c_3\omega^2)\), hence
\[
\omega_0^2=\frac{c_1}{c_3}.
\]
The real part then vanishes precisely when
\[
c_0=c_2\omega_0^2-\omega_0^4
=\frac{(c_3c_2-c_1)c_1}{c_3^2}=H.
\]
At the boundary the quartic factors as
\[
(z^2+\omega_0^2)\left(z^2+c_3z+\frac{H}{\omega_0^2}\right),
\]
and the remaining quadratic has positive coefficients. Thus the boundary carries the simple pair \(z=\pm i\omega_0\) and the other roots remain in the left half-plane.

At this boundary,
\[
R(i\omega_0)+L=0,
\qquad
\sum_{j=1}^4\arctan\frac{\omega_0}{r_j}=\pi.
\]
Set
\[
\theta_j=\arctan\frac{\omega_0}{r_j}\in(0,\pi/2).
\]
Then \(\sum_j\theta_j=\pi\) and
\[
\frac{H-c_\ast}{c_\ast}
=\frac{L}{c_\ast}
=\frac{|R(i\omega_0)|}{R(0)}
=\prod_{j=1}^4\sqrt{1+\left(\frac{\omega_0}{r_j}\right)^2}
=\prod_{j=1}^4\sec\theta_j.
\]
Since \(\log\sec\theta\) is strictly convex on \((0,\pi/2)\), Jensen's inequality gives
\[
\frac14\sum_{j=1}^4\log\sec\theta_j
\ge
\log\sec\left(\frac14\sum_{j=1}^4\theta_j\right)
=\log\sec\frac\pi4.
\]
Therefore
\[
\prod_{j=1}^4\sec\theta_j\ge \sec^4\frac\pi4=4.
\]
This proves \(H-c_\ast\ge4c_\ast\), or \(H\ge5c_\ast\). Equality holds if and only if \(\theta_1=\cdots=\theta_4=\pi/4\), equivalently \(r_1=\cdots=r_4\).

\paragraph{Proof of Lemma~\ref{lem:transversality-negative-branch}}
For the negative branch, set
\[
F(z,\tau)=R(z)+Le^{-z\tau}.
\]
At a root, \(Le^{-z\tau}=-R(z)\). Differentiating implicitly gives
\[
F_z\frac{dz}{d\tau}+F_\tau=0.
\]
Moreover,
\[
F_z=R'(z)-\tau Le^{-z\tau}=R'(z)+\tau R(z),
\qquad
F_\tau=-zLe^{-z\tau}=zR(z).
\]
Hence
\[
\frac{dz}{d\tau}
=-\frac{F_\tau}{F_z}
=-\frac{zR(z)}{R'(z)+\tau R(z)}
=-\frac{z}{R'(z)/R(z)+\tau}.
\]
At \(z=i\omega_\ast\), use
\[
\frac{R'(i\omega)}{R(i\omega)}=a(\omega)-ib(\omega)
\]
to obtain
\[
\frac{dz}{d\tau}
=-\frac{i\omega_\ast}{a(\omega_\ast)+\tau-ib(\omega_\ast)}.
\]
Taking the real part gives \eqref{eq:transversality-formula}, which is strictly positive because \(\omega_\ast>0\) and \(b(\omega_\ast)>0\).

\paragraph{Proof of Theorem~\ref{thm:delay-induced-hopf}}
In the strictly antagonistic class, \(\mathcal C_\rho=-\chi\mathcal A_\rho\mathcal A_\rho^\top\) is symmetric nonpositive on \(T\Delta_m\). Hence all spectral branches are real and nonpositive. Zero branches contribute only the stable roots \(-r_j\), and all possible oscillatory branches are the negative branches treated in Section~\ref{sec:lin}.

By hypothesis, the zero-hard-delay equilibrium is asymptotically stable. Since the characteristic equation is retarded, characteristic roots depend continuously on \(\tau\), and stability can be lost only through imaginary-axis crossings. A negative branch has a positive-frequency imaginary root only when \(L(\gamma)>c_\ast\), and it is zero-hard-delay stable only when \(L(\gamma)<H-c_\ast\). Therefore every possible delay-induced crossing is included in the minimization \eqref{eq:global-critical-delay-antagonistic}; branches outside that window either have no positive-frequency crossing or are already unstable at zero hard delay.

The simplicity assumption isolates a single critical spectral branch and a single root pair. No other root lies on the imaginary axis at \(\tau=\tau_{\rm crit}\) by hypothesis. Lemma~\ref{lem:transversality-negative-branch} gives a nonzero crossing speed, with the root pair crossing from left to right as \(\tau\) increases. The Hopf bifurcation theorem for retarded functional differential equations~\cite{hale1993,hassard1981} then yields a local branch of periodic solutions. Its criticality and orbital stability are governed by the first Lyapunov coefficient, computed for the baseline branch in Appendix~\ref{app:baseline-lyapunov}.

\section{Details for Section~\ref{sec:asym}}
\label{app:sec5}

\paragraph{Proof of Proposition~\ref{prop:tau-decreases-with-L}}
Differentiating $\log|R(i\omega)|$ gives
\[
\frac{d}{d\omega}\log|R(i\omega)|
=\sum_{j=1}^4\frac{\omega}{r_j^2+\omega^2}=b(\omega)>0,
\]
so the defining relation \eqref{eq:section5-frequency-by-L}, $|R(i\omega_\ast)|=L$, yields
\[
\frac{d\omega_\ast}{dL}=\frac{1}{L\,b(\omega_\ast)}.
\]
Differentiating $\tau_0=(\pi-\phi(\omega))/\omega$ and using $\phi'(\omega)=a(\omega)$,
\[
\frac{d}{d\omega}\!\left(\frac{\pi-\phi(\omega)}{\omega}\right)
=-\frac{\omega\,a(\omega)+\pi-\phi(\omega)}{\omega^2}.
\]
Combining the two identities gives \eqref{eq:dtau-dL}. Inside the delay-induced
window $c_\ast<L<H-c_\ast$ one has $0<\phi(\omega_\ast)<\pi$ and
$a(\omega_\ast),b(\omega_\ast)>0$, so numerator and denominator are both
positive and $d\tau_0/dL<0$.

\section{Hopf criticality of the baseline branch}
\label{app:baseline-lyapunov}

This appendix records the finite normal-form calculation that classifies the
baseline Hopf point, together with its consequences for the amplitude and
frequency of the postcritical cycle. The calculation is for the equal-split
baseline
\[
\sigma_X=\sigma_Y=\theta_X=\theta_Y=\frac{\tau_{\rm crit}}{4},
\]
with the matrices and rates in Appendix~\ref{app:numerics}. The critical frequency and
total hard delay are \(\omega_\ast=0.7071067812\) and
\(\tau_{\rm crit}=0.9612039327\), and the transversality coefficients are
\[
\beta=\left.\frac{d\operatorname{Re}z}{d\tau_\Sigma}\right|_{\tau_{\rm crit}}=0.0797591244,
\qquad
\nu=\left.\frac{d\operatorname{Im}z}{d\tau_\Sigma}\right|_{\tau_{\rm crit}}=-0.1534540769.
\]

We use tangent coordinates in the orthonormal basis \((q_1,q_2)\), with state
ordering
\[
(\alpha_1,\alpha_2,\beta_1,\beta_2,\xi_1,\xi_2,\eta_1,\eta_2),
\]
where \(\alpha,\beta\) are the revision tangents and \(\xi,\eta\) the deployment
tangents. A Euclidean-normalized critical right eigenvector is
\[
q=\sqrt{\tfrac{3}{10}}\left(1,0,\;i,0,\;\tfrac{1-i}{\sqrt3},0,\;\tfrac{1+i}{\sqrt3},0\right)^\top,
\]
so that \(\|q\|_2=1\) and the deployed-\(x\) part has \(\|q_\xi\|=1/\sqrt5\). Let
\[
D=\operatorname{diag}(1,1/2),\qquad \lambda=\frac92,
\qquad \delta=\frac{\tau_{\rm crit}}4 .
\]
In the state ordering
\[
(\alpha_1,\alpha_2,\beta_1,\beta_2,\xi_1,\xi_2,\eta_1,\eta_2),
\]
the equal-split linear part has characteristic matrix
\[
M(z)=(z+1)I-\mathcal L_1e^{-z\delta},
\]
where the delayed-coupling matrix is the \(2\times2\) block matrix
\[
\mathcal L_1=
\begin{pmatrix}
0&0&0&\frac32D\\
0&0&-\frac32D&0\\
I&0&0&0\\
0&I&0&0
\end{pmatrix}.
\]
Let \(p\) be the adjoint eigenvector normalized by
\[
p^\ast M'(i\omega_\ast)q=1.
\]
With this normalization,
\[
\begin{aligned}
p={}&(0.3612241965+0.0494869053i,\;0,\;
-0.0494869053+0.3612241965i,\;0,\;\\
&\quad 0.3556862478-0.2699724135i,\;0,\;
0.2699724135+0.3556862478i,\;0)^\top .
\end{aligned}
\]
This gives \(p^\ast M'(i\omega_\ast)q=1\) to the displayed precision.

The nonlinear tangent term used in the normal-form calculation is also explicit.
For \(a=(a,b)\) and delayed opponent deployment \(g=(g,h)\), define
\[
\mathcal N(a,b;g,h)=
\begin{pmatrix}
\dfrac{\sqrt6}{12}(ah+2bg)-a^2g-\dfrac12abh\\[0.7em]
\dfrac{\sqrt6}{6}ag-\dfrac{\sqrt6}{12}bh-abg-\dfrac12b^2h
\end{pmatrix}.
\]
Then the nonlinear part of the \(X\)-revision equation is
\[
\lambda\,\mathcal N(\alpha_1,\alpha_2;\eta_1(t-\delta),\eta_2(t-\delta)),
\]
and, because \(B=-A\), the nonlinear part of the \(Y\)-revision equation is
\[
-\lambda\,\mathcal N(\beta_1,\beta_2;\xi_1(t-\delta),\xi_2(t-\delta)).
\]
The deployment equations have no nonlinear part.

Writing the centre-manifold expansion
\[
X(t)=Zqe^{i\omega_\ast t}+\overline Z\,\overline qe^{-i\omega_\ast t}
+Z^2h_{20}e^{2i\omega_\ast t}+Z\overline Z\,h_{11}
+\overline Z^2\overline h_{20}e^{-2i\omega_\ast t}+\cdots,
\]
the second-order corrections solve
\[
M(2i\omega_\ast)h_{20}=N_{20},
\qquad
M(0)h_{11}=N_{11},
\]
where \(N_{20}\) and \(N_{11}\) are the \(Z^2e^{2i\omega_\ast t}\) and
\(Z\overline Z\) coefficients of the quadratic part of the tangent vector field
evaluated on the critical mode. For the baseline values,
\[
\begin{aligned}
h_{20}={}&(0,\,0.2952741725-0.0869854838i,\,0,\,-0.1706761365+0.1067427373i,\\
&\quad\;\, 0,\,-0.0019274088-0.1777096570i,\,0,\,0.0324823533+0.1115930788i)^\top,
\end{aligned}
\]
\[
h_{11}=(0,\,0.8230285536,\,0,\,0.1175755077,\,0,\,0.8230285536,\,0,\,0.1175755077)^\top.
\]
Both lie in the noncritical tangent direction \(q_2\): the critical mode sits in
\(q_1\), and the simplex nonlinearity couples \(q_1\) into \(q_2\) at second
order. Collecting the resonant \(Z^2\overline Ze^{i\omega_\ast t}\) coefficient
of the cubic part, corrected by the quadratic interaction with \(h_{20}\) and
\(h_{11}\), gives
\[
G_{21}=(-0.6766549711-0.1090984654i,\,0,\,0.5293334015-1.7809136692i,\,0,0,0,0,0)^\top,
\]
and the cubic Hopf coefficient is
\[
c_H=p^\ast G_{21}=-0.9193272749-0.1089995716\,i.
\]
The local Hopf normal form is
\[
\dot Z=\bigl[(\beta+i\nu)(\tau_\Sigma-\tau_{\rm crit})+i\omega_\ast\bigr]Z
+c_HZ|Z|^2
+\mathcal O\!\left(|Z|^4+|\tau_\Sigma-\tau_{\rm crit}|\,|Z|^2\right).
\]
Since \(\operatorname{Re}c_H<0\), the baseline Hopf bifurcation is supercritical
and the small postcritical periodic branch is locally orbitally attracting for
\(\tau_\Sigma>\tau_{\rm crit}\).

On that branch \(|Z|^2=-(\beta/\operatorname{Re}c_H)(\tau_\Sigma-\tau_{\rm crit})+O((\tau_\Sigma-\tau_{\rm crit})^2)\),
so, writing \(\delta=\tau_\Sigma-\tau_{\rm crit}\), the deployed amplitude and
nonlinear frequency are
\[
\begin{aligned}
\max\|x-u\|
&=2\|q_\xi\|\sqrt{\frac{\beta}{-\operatorname{Re}c_H}}\,
\sqrt{\delta}
+o\!\left(\sqrt{\delta}\right)
=0.26345\,\sqrt{\delta}+o\!\left(\sqrt{\delta}\right),\\
\Omega(\delta)
&=\omega_\ast+\Bigl(\nu-\frac{\operatorname{Im}c_H}{\operatorname{Re}c_H}\,\beta\Bigr)\delta+O(\delta^2)
=\omega_\ast-0.1629106770\,\delta+O(\delta^2).
\end{aligned}
\]
At \(\tau_\Sigma=1.10\) this gives \(T_{\rm comp}=2\pi/\Omega\approx9.18\) and
\(T_{\rm scalar}\approx4.59\), matching the direct integration
(Table~\ref{tab:step-refinement}); the predicted amplitude slope \(0.26345\)
agrees with the direct-integration fit \(0.262\) of
Figure~\ref{fig:num-amplitude-scaling}.

\paragraph{Reproducibility of the normal-form coefficients}
The calculation above is a finite harmonic-balance computation for a retarded
delay system with polynomial nonlinearity. The delayed arguments contribute the
phase factor \(e^{-in\omega_\ast\delta}\) to the \(n\)-th harmonic. Thus
\(N_{20}\) and \(N_{11}\) are obtained by collecting the
\(Z^2e^{2i\omega_\ast t}\) and \(Z\overline Z\) coefficients of the explicit
quadratic field \(\mathcal N\) evaluated on
\[
Zqe^{i\omega_\ast t}+\overline Z\,\overline q e^{-i\omega_\ast t}
\]
and its delayed image. The vectors \(h_{20}\) and \(h_{11}\) are then obtained
from
\[
M(2i\omega_\ast)h_{20}=N_{20},
\qquad
M(0)h_{11}=N_{11}.
\]
Finally, \(G_{21}\) is the \(Z^2\overline Z e^{i\omega_\ast t}\) coefficient
of the cubic field plus the quadratic interactions with \(h_{20}\) and
\(h_{11}\), and
\[
c_H=p^\ast G_{21}.
\]
Thus the coefficient is obtained by finite-dimensional operations displayed in
this appendix: the characteristic matrix \(M(z)\), the nonlinear polynomial
\(\mathcal N\), the normalized adjoint vector \(p\), the correction vectors
\(h_{20},h_{11}\), and the resonant vector \(G_{21}\) are all given explicitly.
The numerical rounding in the displayed value of
\(c_H\) reflects only the finite-dimensional linear solves and decimal
display precision of these quantities.

\paragraph{Supercriticality across the leading-branch family}
For the diagonal three-strategy family used in Figure~\ref{fig:num-gain-window},
the leading-branch criticality can be classified analytically throughout the
delay-induced window under the equal hard-lag split. This is not a numerical
sweep and not a universal strict-antagonism theorem: it is a sign calculation
for this particular diagonal \(m=3\) family. The representative values in
Table~\ref{tab:lyapunov-family} are reported only as numerical checks of the
formula.

\begin{proposition}[supercriticality of the leading branch in the diagonal three-strategy example]
\label{prop:m3-leading-supercritical}
Consider the diagonal three-strategy family
\[
A=Q\operatorname{diag}(1,1/2)Q^\top,
\qquad B=-A,
\]
with unit rates
\[
\mu_X=\mu_Y=\kappa_X=\kappa_Y=1.
\]
Let the leading antagonistic branch have strength
\[
L=(1+\omega^2)^2,
\qquad 0<\omega<1,
\]
and choose \(\lambda_X=\lambda_Y=3\sqrt L\). Let the hard lags be split equally,
\[
\sigma_X=\sigma_Y=\theta_X=\theta_Y=\delta,
\qquad
\delta=\frac{\tau_0(L)}4=
\frac{\pi-4\arctan\omega}{4\omega}.
\]
Then the Hopf bifurcation of the leading branch at \(\tau_\Sigma=\tau_0(L)\) is
supercritical for every \(L\in(1,4)\). The small postcritical periodic branch is
locally orbitally attracting for \(\tau_\Sigma>\tau_0(L)\) sufficiently close to
the threshold.
\end{proposition}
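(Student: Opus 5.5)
We will repeat the finite normal-form calculation of this appendix symbolically, with \(\omega\in(0,1)\) as the single parameter in place of the numerical baseline. The baseline is the special case \(\omega=1/\sqrt2\).

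\textbf{Linear data.} With \(\lambda=3\sqrt L\) we get \(K=\lambda^2/9=L\). The leading branch \(\gamma_1=-1\) then has strength exactly \(L=(1+\omega^2)^2\) and Hopf frequency \(\omega\). With unit rates, \(R(i\omega)=(1+i\omega)^4\), and the critical delay is \(\tau_0=(\pi-4\arctan\omega)/\omega\). Its quarter \(\delta\) satisfies
\[
e^{-i\omega\delta}=e^{-i\pi/4}\Bigl(\tfrac{1+i\omega}{\sqrt{1+\omega^2}}\Bigr)^{-1}\cdot\text{(unit phase)}.
\]
So every delay factor \(e^{-in\omega\delta}\) is an explicit algebraic expression in \(\omega\) and \(\sqrt{1+\omega^2}\). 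We will parametrize by \(\vartheta=\arctan\omega\in(0,\pi/4)\), so that
\[
1+i\omega=\sec\vartheta\, e^{i\vartheta},\qquad e^{-i\omega\delta}=e^{-i(\pi/4-\vartheta)}.
\]

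The critical right eigenvector keeps the structure of \(q\) displayed above. It is supported on the \(q_1\) components. Its relative entries are
\[
\alpha_1=1,\qquad \beta_1=\text{unit phase},\qquad \xi_1=T(\omega)e^{-i\omega\delta}\alpha_1,\qquad \eta_1=T(\omega)e^{-i\omega\delta}\beta_1,
\]
where \(T(\omega)=1/(1+i\omega)\). The adjoint vector \(p\) is obtained the same way from \(M(z)^\ast\). We normalize it by \(p^\ast M'(i\omega)q=1\), using
\[
M'(z)=I+\delta\,\mathcal L_1e^{-z\delta}.
\]
The transversality coefficient \(\beta>0\) comes from Lemma~\ref{lem:transversality-negative-branch}. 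In particular only the sign of \(\operatorname{Re}c_H\) matters, and overall positive normalizations can be dropped.

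\textbf{Second order.} The quadratic part of \(\mathcal N\) maps \(q_1\otimes q_1\) only into the \(q_2\) direction. This gives \(N_{20}\) and \(N_{11}\) supported on \(q_2\), so \(h_{20}\) and \(h_{11}\) solve \(2\times 2\)-block systems on the noncritical branch \(\gamma_2=-1/4\):
\[
M(2i\omega)h_{20}=N_{20},\qquad M(0)h_{11}=N_{11}.
\]
On that branch the characteristic function is \(R(z)+\tfrac L4e^{-4z\delta}\) with \(R(z)=(z+1)^4\). We must check that it does not vanish at \(0\) or \(2i\omega\). At \(0\) it equals \(1+L/4>0\). At \(2i\omega\) the modulus equation would require
\[
(1+4\omega^2)^2=\tfrac L4=\tfrac{(1+\omega^2)^2}{4},
\]
which is impossible. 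The solutions are then explicit rational functions of \(\omega\) and the unit phases. Next we assemble \(G_{21}\), the cubic resonant term of \(\mathcal N\) (\(-a^2g-\tfrac12abh\), etc.) on the critical mode, plus the quadratic terms pairing \(q_1\) with \(h_{20}\) and \(h_{11}\). Then we form \(c_H(\omega)=p^\ast G_{21}\).

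\textbf{Sign and main obstacle.} The key step is showing that \(\operatorname{Re}c_H(\omega)<0\) for all \(\vartheta\in(0,\pi/4)\). We will first clear the positive factors: powers of \(\sec\vartheta\), \(L\), and \(|p^\ast M'q|^{-2}\)-type moduli. This should reduce the claim to the negativity of an explicit trigonometric polynomial \(P(\vartheta)\), or a rational function of \(\omega\) after substituting \(t=\tan\vartheta\) and using \(e^{i\pi/4}\) algebraically. We would then try the following, in order:
\begin{enumerate}
\item Split \(\operatorname{Re}c_H\) into the direct cubic contribution, which we expect to be manifestly negative since it is a damping term proportional to \(-|q_\alpha|^2|q_\eta|^2\) times a positive phase factor.
\item Control the \(h_{11}\) contribution. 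Since \(h_{11}\) is real and determined at \(z=0\), we expect this contribution to also be of fixed sign.
\item Bound the \(h_{20}\) term in modulus by a fraction of the other two.
\end{enumerate}
If this splitting fails, we fall back to writing \(\operatorname{Re}c_H\) as a polynomial in \(t\in(0,1)\) over a positive denominator. We then certify negativity on \([0,1]\) by Sturm sequences or Bernstein-coefficient sign checks. As consistency checks, we verify the endpoint limits \(\omega\downarrow0\) and \(\omega\uparrow1\), and the baseline value \(-0.919\dots\) at \(\omega=1/\sqrt2\).

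Supercriticality and local orbital attraction then follow from the Hopf normal form recorded above with \(\beta>0\) and \(\operatorname{Re}c_H<0\).
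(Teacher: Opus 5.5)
Your plan follows the paper's route: redo the baseline finite normal form symbolically in $\omega$, then decide the sign. The linear ingredients are right: $K=L$, $e^{-i\omega\delta}=e^{-i(\pi/4-\vartheta)}$, the critical mode supported on the $q_1$ components with $\beta_1=i$, and both non-resonance checks on the $\gamma_2$ block. Those checks in fact control the $\delta$-free denominator factors of the paper's formula, since $\omega^4+2\omega^2+5=4(1+L/4)$ and $P_+P_-=\lvert 4(1-i\omega)^2(1+2i\omega)^4+(1+i\omega)^6\rvert^2=16(1+\omega^2)^2\lvert(1+2i\omega)^4+\tfrac L4e^{-8i\omega\delta}\rvert^2$.

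The gap is the numerator, i.e.\ the sign of $\operatorname{Re}c_H$. That sign is the whole content of the proposition, and you leave it as a list of things to try. Your fallback would also not work as described. The delay enters $c_H$ not only through the factors $e^{-in\omega\delta}$, which are algebraic and make $h_{20}$, $h_{11}$, $G_{21}$ rational in $\omega$. It also enters explicitly through the normalization. On the block acting on $(\alpha_1,\beta_1,\xi_1,\eta_1)$, take $q=(1,i,e^{-i\pi/4}/\sqrt{1+\omega^2},e^{i\pi/4}/\sqrt{1+\omega^2})$ and left null vector $\ell=(1,-i,\sqrt{1+\omega^2}\,e^{i\pi/4},\sqrt{1+\omega^2}\,e^{-i\pi/4})$. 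Then $\ell M'(i\omega)q=4(1+\delta+i\delta\omega)$. Since $\delta=(\pi/4-\arctan t)/t$ is transcendental in $t=\tan\vartheta$, clearing positive factors leaves neither a polynomial in $t$ nor a trigonometric polynomial in $\vartheta$. Sturm or Bernstein certification therefore has nothing to act on.

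The missing step is to exploit that $\delta$ enters only there. With $S(\omega)=\ell G_{21}$,
\[
4\lvert1+\delta+i\delta\omega\rvert^2\operatorname{Re}c_H=\operatorname{Re}S+\delta\bigl(\operatorname{Re}S+\omega\operatorname{Im}S\bigr),
\]
which is affine in $\delta$. It then suffices to show both rational coefficients are negative on $(0,1)$, treating $\delta>0$ as free. Up to the positive factor $(\omega^4+2\omega^2+5)P_+P_-$, these coefficients are the paper's $-3P_0$ and $-3P_1$, whose sign is read off from positive polynomial coefficients.

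Your primary splitting strategy rests on an expectation that fails where it matters.
\begin{itemize}
\item The direct cubic term is stabilizing, as you expect: $\ell G_{\rm cub}=-6(3+i\omega)$ contributes $-6[3(1+\delta)+\delta\omega^2]$ to the left-hand side above.
\item The $h_{11}$ term has a fixed sign, but it is the destabilizing one. With $s=1+\omega^2<2$, $\ell G_{11}=6(4-s^2+4i\omega)/(4+s^2)$ contributes $6[(4-s^2)(1+\delta)+4\delta\omega^2]/(4+s^2)>0$.
\item At the baseline the $h_{20}$ term is destabilizing too. The three contributions there are about $-23.0$, $+2.5$ and $+1.3$, summing to about $-19.2$, which is consistent with the reported baseline $c_H$.
\end{itemize}
So negativity requires the cubic term to dominate two opposing second-order corrections, uniformly in $\omega\in(0,1)$ and $\delta>0$. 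The proposal supplies no such estimate. As written, you have set up the right computation but not proved the proposition.
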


\begin{proof}
Use tangent coordinates in the basis \((q_1,q_2)\), and write the state as
\((\alpha_1,\alpha_2,\beta_1,\beta_2,\xi_1,\xi_2,\eta_1,\eta_2)\). In these
coordinates the critical leading mode lies in the first tangent direction,
whereas the second tangent direction is noncritical and enters through the
second-order centre-manifold corrections. With the critical-amplitude normalization
\[
\alpha_1(t)=Ze^{i\omega t}+\overline Z e^{-i\omega t}+\cdots,
\]
one obtains the cubic coefficient by the same finite calculation as in the
baseline appendix: solve
\[
M(2i\omega)h_{20}=N_{20},
\qquad
M(0)h_{11}=N_{11},
\]
then project the resonant \(Z^2\overline Z e^{i\omega t}\) coefficient onto the
adjoint critical eigenvector normalized by \(p^\ast M'(i\omega)q=1\). Eliminating
\(h_{20}\) and \(h_{11}\) gives the following explicit real part:
\[
\operatorname{Re}\widetilde c_H(\omega)
=
-\frac{3\,[\delta P_1(\omega)+P_0(\omega)]}
{4(\omega^4+2\omega^2+5)
(\delta^2\omega^2+\delta^2+2\delta+1)
P_-(\omega)P_+(\omega)},
\]
where
\[
\begin{aligned}
P_1(\omega)={}&10595\omega^{18}+81273\omega^{16}
+256824\omega^{14}+510056\omega^{12}\\
&+635430\omega^{10}+516306\omega^8
+244352\omega^6+64320\omega^4\\
&+8895\omega^2+525,
\end{aligned}
\]
\[
\begin{aligned}
P_0(\omega)={}&33017\omega^{16}+145556\omega^{14}
+357512\omega^{12}+491908\omega^{10}\\
&+433170\omega^8+217660\omega^6
+60000\omega^4+8620\omega^2+525,
\end{aligned}
\]
and
\[
P_\pm(\omega)=65\omega^6\pm48\omega^5+99\omega^4
\pm16\omega^3+39\omega^2+5.
\]
For \(0<\omega<1\), one has \(\delta>0\). The polynomials \(P_0\) and \(P_1\)
have strictly positive coefficients, hence \(\delta P_1(\omega)+P_0(\omega)>0\).
The factors \(\omega^4+2\omega^2+5\) and
\(\delta^2\omega^2+\delta^2+2\delta+1\) are also positive. Moreover,
\(P_+(\omega)>0\), and
\[
\begin{aligned}
P_-(\omega)
&=65\omega^6-48\omega^5+99\omega^4-16\omega^3+39\omega^2+5\\
&=\omega^4(65\omega^2-48\omega+9)
 +\omega^2(90\omega^2-16\omega+39)+5.
\end{aligned}
\]
The two displayed quadratics have negative discriminant and positive leading
coefficient, so they are positive for all real \(\omega\). Therefore
\(P_-(\omega)>0\), and every factor in the denominator is positive while the
formula carries an overall minus sign. Thus
\[
\operatorname{Re}\widetilde c_H(\omega)<0,
\qquad 0<\omega<1.
\]
The Hopf crossing is transverse by Lemma~\ref{lem:transversality-negative-branch},
so the Hopf bifurcation is supercritical. A positive real rescaling of the
critical amplitude changes the cubic coefficient by a positive factor and does
not change the sign of its real part; hence the conclusion is independent of the
normalization used to report the numerical coefficients in Table~\ref{tab:lyapunov-family}.
\end{proof}

Table~\ref{tab:lyapunov-family} reports representative Euclidean-normalized
values of the same cubic coefficient. The values are not used as a proof of the
sign; they are a numerical check on the finite normal-form calculation and show
that the baseline value \(L=9/4\) is not exceptional. The statement is specific
to the equal split: the linear threshold depends only on the total hard delay,
but the nonlinear Hopf coefficient can depend on how that total is distributed
among the observation and deployment channels.

\begin{table}
\centering
\caption{Representative Euclidean-normalized values of the cubic Hopf coefficient
\(\operatorname{Re}c_H\) along the leading branch of the diagonal \(m=3\) example,
under the equal hard-lag split. Proposition~\ref{prop:m3-leading-supercritical}
proves the negative sign throughout the whole window \(1<L<4\).}
\label{tab:lyapunov-family}
\begin{tabular}{@{}lcccccc@{}}
\toprule
\(L\) & \(1.01\) & \(1.10\) & \(1.50\) & \(2.25\) & \(3.00\) & \(3.90\) \\
\midrule
\(\omega_\ast\) & \(0.071\) & \(0.221\) & \(0.474\) & \(0.707\) & \(0.856\) & \(0.987\) \\
\(\operatorname{Re}c_H\) & \(-0.071\) & \(-0.229\) & \(-0.551\) & \(-0.919\) & \(-1.168\) & \(-1.376\) \\
\bottomrule
\end{tabular}
\end{table}

\paragraph{Exactly solvable two-strategy case}
When \(m=2\) the tangent space is one-dimensional and the calculation is closed
form, providing an analytic family of supercritical Hopf points.

\begin{proposition}[supercriticality in the two-strategy symmetric case]
\label{prop:m2-supercritical}
Consider the two-strategy strictly antagonistic model with uniform exploration
and unit rates \(\mu_X=\mu_Y=\kappa_X=\kappa_Y=1\). Let the leading tangent
branch have strength \(L\in(1,4)\), set
\[
\omega=\sqrt{\sqrt L-1},
\qquad
\tau_0(L)=\frac{\pi-4\arctan\omega}{\omega},
\qquad
\delta=\frac{\tau_0(L)}{4},
\]
and split the hard lags equally. Then the Hopf bifurcation at
\(\tau_\Sigma=\tau_0(L)\) is supercritical. In the critical coordinate
normalized so that \(\alpha(t)=Ze^{i\omega t}+\overline Ze^{-i\omega t}+\cdots\),
the cubic Hopf coefficient is
\[
c_H^{(2)}=-\frac{3+i\omega}{1+\delta+i\delta\omega},
\qquad
\operatorname{Re}c_H^{(2)}
=-\frac{3(1+\delta)+\delta\omega^2}{(1+\delta)^2+\delta^2\omega^2}<0 .
\]
Hence the small postcritical periodic branch is locally orbitally attracting for
\(\tau_\Sigma>\tau_0(L)\) sufficiently close to \(\tau_0(L)\).
\end{proposition}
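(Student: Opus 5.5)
The plan is to exploit the fact that for \(m=2\) the tangent space \(T\Delta_2\) is one-dimensional. The reduced system then has \emph{no quadratic nonlinearity}, so the centre-manifold corrections \(h_{20},h_{11}\) of Appendix~\ref{app:baseline-lyapunov} vanish. The cubic coefficient is then a direct projection of the resonant cubic term onto the adjoint eigenvector.

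\textbf{Reduction to scalar tangent coordinates.} Write \(p=u+\alpha e\), \(q=u+\beta e\), \(x=u+\xi e\), \(y=u+\eta e\) with \(e=(1,-1)^\top/2\). In these coordinates \(\alpha=p_1-p_2\) and \(p_1p_2=\tfrac14(1-\alpha^2)\). Two-sided barycentric balance forces \(A_\rho=a_\rho\mathbf 1\mathbf 1^\top/\!\cdot+g\,ee^\top\)-type structure, so \((A_\rho y)_1-(A_\rho y)_2\) is a fixed multiple of \(\eta\). For two strategies the replicator term is \(p_1p_2(f_1-f_2)\). Hence, with unit rates and uniform exploration,
\[
\dot\alpha=-\alpha+k(1-\alpha^2)\eta(t-\sigma_X),\qquad
\dot\beta=-\beta-k\chi'(1-\beta^2)\xi(t-\sigma_Y),
\]
\[
\dot\xi=\alpha(t-\theta_X)-\xi,\qquad \dot\eta=\beta(t-\theta_Y)-\eta .
\]
Here the gains are fixed by strict antagonism and the product of the two linear gains equals \(-L\). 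After rescaling \(\beta\) I would make the two gains symmetric, \(\pm\sqrt L\), which matches the choice \(\lambda_X=\lambda_Y\) in the equal-split family. Under the equal split every delay equals \(\delta\). The linearization reproduces \((z+1)^4+Le^{-4z\delta}=0\), so \(\omega=\sqrt{\sqrt L-1}\) and \(\tau_0=4\delta\) follow from the preceding analysis.

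\textbf{Critical eigenvector, adjoint, and resonant term.} At \(z=i\omega\) I will solve for the right eigenvector \(q\), normalized so that its \(\alpha\)-component is \(1\). Its \(\xi\)-component is \(e^{-i\omega\delta}/(1+i\omega)\). The \(\beta\)- and \(\eta\)-components follow from the chain relations \eqref{eq:deployment-eigenvector-relations}--\eqref{eq:b-from-a}, using \((1+i\omega)^4=-Le^{-4i\omega\delta}\) to simplify every phase. I then take the adjoint eigenvector \(p\) of \(M(z)\) normalized by \(p^\ast M'(i\omega)q=1\).

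Only the two revision equations carry nonlinearity: \(-k\alpha^2\eta(t-\delta)\) and its \(\beta\) analogue. I substitute \(\alpha=Ze^{i\omega t}+\text{c.c.}\) and \(\eta(t-\delta)=Zq_\eta e^{-i\omega\delta}e^{i\omega t}+\text{c.c.}\). The resonant \(Z^2\overline Z e^{i\omega t}\) coefficient of \(\alpha^2\eta_\delta\) is
\[
2q_\eta e^{-i\omega\delta}+\overline{q_\eta}e^{i\omega\delta}.
\]
Here \(|q_\alpha|^2=1\) is used, and the \(\beta\)-equation contributes analogously. This gives \(G_{21}\) with nonzero entries only in the revision slots, and \(c_H^{(2)}=p^\ast G_{21}\).

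\textbf{Evaluating and signing.} The normalization \(p^\ast M'(i\omega)q\) reduces to the scalar derivative \(F_z=R'(i\omega)+\tau R(i\omega)\) of Lemma~\ref{lem:transversality-negative-branch}, up to the eigenvector-component factors. With four equal unit roots, \(R'/R=4/(1+i\omega)\) and \(\tau=4\delta\). So \(F_z/R\propto(1+\delta+i\delta\omega)/(1+i\omega)\), which is the source of the denominator \(1+\delta+i\delta\omega\). The numerator combination \(2q_\eta\ldots+\overline{q_\eta}\ldots\), weighted by the adjoint components, should collapse via \((1+i\omega)^4=-Le^{-4i\omega\delta}\) to a multiple of \(3+i\omega\). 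The real part \(-[3(1+\delta)+\delta\omega^2]/[(1+\delta)^2+\delta^2\omega^2]\) is then manifestly negative since \(\delta>0\). Combined with transversality (Lemma~\ref{lem:transversality-negative-branch}) and the standard Hopf theorem for retarded equations, this gives supercriticality and local orbital attraction.

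The main obstacle is the phase and normalization bookkeeping in the projection. Each delayed slot contributes \(e^{-i\omega\delta}\), and the adjoint carries conjugate phases. The identity \((1+i\omega)^4=-Le^{-4i\omega\delta}\) must be used at exactly the right places for the \(\sqrt L\) factors and phases to cancel. I would first verify the closed form numerically at \(L=9/4\) before simplifying symbolically.
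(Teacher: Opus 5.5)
Your route is the paper's route. You pass to one scalar tangent coordinate per layer and note that two-strategy saturation produces no quadratic monomials, so \(h_{20}=h_{11}=0\). You then project the resonant cubic term onto the left eigenvector, with the denominator coming from \(\ell^\top M'(i\omega)q=\ell^\top q\,(1+\delta(1+i\omega))\). The collapse you anticipate does occur. At the principal crossing \(e^{-i\omega\delta}/(1+i\omega)=e^{-i\pi/4}/\sqrt{1+\omega^2}\), so \(q=(1,\,i,\,e^{-i\pi/4}/\sqrt{1+\omega^2},\,ie^{-i\pi/4}/\sqrt{1+\omega^2})\) and \(q_\eta e^{-i\omega\delta}=(1+i\omega)/(1+\omega^2)\). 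Your resonant combination therefore equals \((3+i\omega)/(1+\omega^2)\). The left eigenvector (\(\ell^\top M(i\omega)=0\)) has \(\ell_\alpha=1\), \(\ell_\beta=-i\) and \(\ell^\top q=4\), and the \(\beta\)-slot contributes exactly as much as the \(\alpha\)-slot.

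The concrete problem is your choice of coordinate: as set up, your computation gives a constant wrong by a factor of two. With \(e=(1,-1)^\top/2\), your \(\alpha=p_1-p_2\) satisfies \(p_1p_2=\tfrac14(1-\alpha^2)\), so your nonlinear terms are \(-\sqrt L\,\alpha^2\eta(t-\delta)\) and \(+\sqrt L\,\beta^2\xi(t-\delta)\).

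The displayed \(c_H^{(2)}\) refers to the orthonormal tangent coordinate \(p=u+\alpha(1,-1)^\top/\sqrt2\). This is the same convention as the orthonormal basis \(q_1,q_2\) used for \(m=3\). In that coordinate \(p_1p_2=\tfrac14(1-2\alpha^2)\), and the nonlinear terms are \(-2\sqrt L\,\alpha^2\eta(t-\delta)\) and \(+2\sqrt L\,\beta^2\xi(t-\delta)\). The linear part is identical in both conventions.

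Since \(\alpha_{\mathrm{yours}}=\sqrt2\,\alpha_{\mathrm{orth}}\), and the cubic coefficient in the normalization \(\alpha=Ze^{i\omega t}+\overline Ze^{-i\omega t}+\cdots\) scales like \(c^{-2}\) under \(\alpha\mapsto c\alpha\), your computation would give \(\ell^\top G_{21}=-2(3+i\omega)\). That yields \(c_H=-(3+i\omega)/[2(1+\delta+i\delta\omega)]\), half the stated value. Supercriticality and local orbital attraction are unaffected. To prove the formula as stated, however, you must work in the orthonormal coordinate.

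Similarly, your remark about rescaling \(\beta\) to symmetrize the gains is harmless only because the symmetric case already has gains \(\pm\sqrt L\). Rescaling \((\beta,\eta)\mapsto c(\beta,\eta)\) turns \(1-\beta^2\) into \(1-\beta^2/c^2\) and changes the \(\beta\)-slot contribution to \(G_{21}\). So you cannot keep the symmetric nonlinearity after such a rescaling.
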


\begin{proof}
In the one-dimensional tangent coordinates \((\alpha,\beta,\xi,\eta)\), and
with the equal split \(\sigma_X=\sigma_Y=\theta_X=\theta_Y=\delta\), the system is
\[
\dot\alpha=-\alpha+\sqrt L\,\eta(t-\delta)
          -2\sqrt L\,\alpha(t)^2\eta(t-\delta),
\]
\[
\dot\beta=-\beta-\sqrt L\,\xi(t-\delta)
          +2\sqrt L\,\beta(t)^2\xi(t-\delta),
\]
\[
\dot\xi=\alpha(t-\delta)-\xi,
\qquad
\dot\eta=\beta(t-\delta)-\eta .
\]
The two-strategy simplex saturation produces no quadratic tangent monomials.
Consequently the second-order centre-manifold corrections vanish:
\[
h_{20}=h_{11}=0.
\]
The characteristic matrix is
\[
M(z)=(z+1)I-e^{-z\delta}
\begin{pmatrix}
0&0&0&\sqrt L\\
0&0&-\sqrt L&0\\
1&0&0&0\\
0&1&0&0
\end{pmatrix}.
\]
At the Hopf point \(L=(1+\omega^2)^2\) and
\(\omega\delta=\pi/4-\arctan\omega\). A right critical eigenvector can be chosen as
\[
q=\left(1,\ i,\ \frac{e^{-i\pi/4}}{\sqrt{1+\omega^2}},\
\frac{i e^{-i\pi/4}}{\sqrt{1+\omega^2}}\right)^\top .
\]
Let \(\ell\) be a left eigenvector satisfying \(\ell^\top M(i\omega)=0\).
A direct multiplication gives the normalization factor
\[
\ell^\top M'(i\omega)q=4(1+\delta+i\delta\omega).
\]
Since \(h_{20}=h_{11}=0\), the resonant cubic term \(G_{21}\) comes only from the
cubic saturation terms
\(-2\sqrt L\,\alpha^2\eta_\delta\) and
\(2\sqrt L\,\beta^2\xi_\delta\). Collecting the
\(Z^2\overline Z e^{i\omega t}\) coefficient gives
\[
\ell^\top G_{21}=-4(3+i\omega).
\]
Therefore the cubic Hopf coefficient in the normalization
\(\alpha(t)=Ze^{i\omega t}+\overline Ze^{-i\omega t}+\cdots\) is
\[
c_H^{(2)}
=
\frac{\ell^\top G_{21}}{\ell^\top M'(i\omega)q}
=
-\frac{3+i\omega}{1+\delta+i\delta\omega}.
\]
Taking the real part gives
\[
\operatorname{Re}c_H^{(2)}
=
-\frac{3(1+\delta)+\delta\omega^2}
{(1+\delta)^2+\delta^2\omega^2}.
\]
For \(L\in(1,4)\), one has \(\omega>0\) and \(\delta>0\), so the denominator and
the bracket in the numerator are strictly positive. Hence
\[
\operatorname{Re}c_H^{(2)}<0
\]
throughout the two-strategy delay-induced window. The transversality coefficient
is positive by Lemma~\ref{lem:transversality-negative-branch}; therefore the Hopf
bifurcation is supercritical and the small postcritical periodic branch is
locally orbitally attracting.
\end{proof}

\section{Numerical details for Section~\ref{sec:num}}
\label{app:numerics}

This appendix records the numerical details used in Section~\ref{sec:num}.
The computations are intended as reproducible diagnostics for the analytical
threshold and observable signatures, not as a continuation analysis of
periodic branches.

\paragraph{Baseline matrices and rates}
The baseline example uses
\[
q_1=\frac{1}{\sqrt2}(1,-1,0)^\top,
\qquad
q_2=\frac{1}{\sqrt6}(1,1,-2)^\top,
\qquad
Q=(q_1,q_2),
\]
\[
A=Q
\begin{pmatrix}
1&0\\0&1/2
\end{pmatrix}
Q^\top,
\qquad
B=-A^\top=-A.
\]
The rates are
\[
\mu_X=\mu_Y=1,
\qquad
\kappa_X=\kappa_Y=1,
\qquad
\lambda_X=\lambda_Y=\frac92.
\]
Unless stated otherwise, the baseline time-domain simulations use the equal split
\[
\sigma_X=\sigma_Y=\theta_X=\theta_Y=\frac{\tau_\Sigma}{4}.
\]
The delay-allocation runs use the three splits reported in
Table~\ref{tab:delay-split}.

\paragraph{Initial histories}
For \(t\le0\), the initial histories are constant:
\[
p(t)=x(t)=u+\varepsilon q_1,
\qquad
q(t)=y(t)=u,
\qquad
\varepsilon=10^{-3}.
\]
The perturbation is tangent to the simplex, and \(\varepsilon\) is small enough
that all components remain strictly positive.

\paragraph{Time stepping}
The delay system was integrated by a fixed-step method of steps with linear
interpolation of delayed states. No projection or simplex renormalization was
applied in the runs reported in Section~\ref{sec:num}. The affine constraints
were monitored directly. The maximum simplex drift in the runs below stayed at
roundoff level.

\paragraph{Period extraction}
For the oscillatory run \(\tau_\Sigma=1.10\), periods were measured from a
late-time window after transients. The compositional period was estimated from
successive maxima of \(x_1(t)-u_1\). The scalar period was estimated from
successive maxima of the centered scalar performance
\(\Phi_X(t)-\overline{\Phi}_X\). Since the simulation is not infinitesimally
close to \(\tau_{\rm crit}\), these measured periods are expected to differ
slightly from the linear Hopf values.

\paragraph{Amplitude extraction}
For Figure~\ref{fig:num-amplitude-scaling}, the system was integrated for
\[
\tau_\Sigma\in\{0.97,0.98,0.99,1.00,1.02,1.05,1.08,1.11,1.15,1.20\}.
\]
After discarding the transient, the saturated amplitude was recorded as the peak
of \(\|x(t)-u\|\) on the same late-time window used for period extraction. The
least-squares line was fitted through the origin as a function of
\(\sqrt{\tau_\Sigma-\tau_{\rm crit}}\). The fit is used only as a numerical
check of the normal-form slope computed in Appendix~\ref{app:baseline-lyapunov}.

\paragraph{Antiphase-ratio extraction}
For the \(\chi=2,3\) runs in Table~\ref{tab:antiphase-chi}, the interaction is
\[
\mathcal B=-\chi \mathcal A^\top .
\]
The selection rates are chosen symmetrically as
\[
\lambda_X=\lambda_Y=\frac{9}{2\sqrt{\chi}},
\]
so that the leading branch strength remains \(L=9/4\). The scalar amplitudes are
the peak-to-peak amplitudes of the centered late-time traces
\(\Phi_X-\overline{\Phi}_X\) and \(\Phi_Y-\overline{\Phi}_Y\). The reported
phase shift is obtained from the dominant scalar harmonic.

\paragraph{Delay-allocation phase measurement}
For Table~\ref{tab:delay-split}, the three hard-lag allocations have the same
cross-delays \(\delta_X=\delta_Y=0.55\). Constant histories were chosen
compatibly across the reduced revision variables, and transients were discarded
before measurement. The phase of \(x_1\) relative to \(p_1\) was measured from
the dominant fundamental harmonic and compared with the exact filter response
\[
-\Omega\theta_X-\arctan(\Omega/\kappa_X),
\]
using the measured fundamental frequency \(\Omega\).

\begin{table}[t]
\centering
\caption{Step-refinement check for the oscillatory run at
\(\tau_\Sigma=1.10\). Periods are measured from a late-time window after the
transient.}
\label{tab:step-refinement}
\begin{tabular}{@{}llll@{}}
\toprule
Step \(h\) & \(T_{\rm comp}\) & \(T_{\rm scalar}\) & maximum simplex drift \\
\midrule
\(10^{-2}\) & \(9.176\) & \(4.588\) & \(1.0\times10^{-15}\) \\
\(5\cdot10^{-3}\) & \(9.174\) & \(4.587\) & \(1.5\times10^{-15}\) \\
\(2.5\cdot10^{-3}\) & \(9.172\) & \(4.586\) & \(2.3\times10^{-15}\) \\
\bottomrule
\end{tabular}
\end{table}

The small changes under step refinement are negligible for the qualitative
diagnostics used in the main text: crossing the analytical threshold changes
decay into bounded oscillation, and the scalar performance period is one half
of the compositional period up to the expected nonlinear frequency shift away
from the bifurcation point.

\end{document}